\definecolor{myred}{rgb}{0.75,0,0}
\definecolor{mygreen}{rgb}{0,0.5,0}
\definecolor{myblue}{rgb}{0,0,0.65}
\definecolor{darkred}{HTML}{993333}
\newcommand{\arxiv}[1]{\href{http://arxiv.org/abs/#1}{\tt arXiv:\nolinkurl{#1}}}
\newcommand{\arXiv}[1]{\href{http://arxiv.org/abs/#1}{\tt arXiv:\nolinkurl{#1}}}
\newtheorem{theorem}{Theorem}[section]
\newtheorem{lemma}[theorem]{Lemma}
\newtheorem{definition}[theorem]{Definition}
\newtheorem{example}[theorem]{Example}
\newtheorem{proposition}[theorem]{Proposition}
\newtheorem{corollary}[theorem]{Corollary}
\newtheorem{conjecture}[theorem]{Conjecture}
\theoremstyle{remark}
\newtheorem{remark}[theorem]{Remark}
\numberwithin{equation}{section}
\newcommand{\nc}{\newcommand}
\nc{\flags}{\mathcal{F}}
\def\wt{\text{wt}}
\newcommand{\spann}{\operatorname{span}}
\nc{\KP}{\operatorname{KP}}
\def\ii{{\bf i}}
\def\aa{{\bf a}}
\nc{\re}{re}
\def\im{\operatorname{im}}
\def\N{\mathbb{N}}
\def\C{\mathcal{C}}
\def\R{\mathbb{R}}
\def\Z{\mathbb{Z}}
\def\F{\mathcal{F}}
\def\Th{\Theta}
\def\one{1}
\def\E{\mathcal{E}}
\def\A{\mathcal{A}}
\def\B{\mathcal{B}}
\def\jj{{\bf j}}
\nc{\co}{\overline{\nabla}}
\def\a{\alpha}
\def\b{\beta}
\def\la{\lambda}
\def\ga{\gamma}
\def\d{\delta}
\def\th{\theta}
\def\w{\omega}
\def\f{\mathbf{f}}
\def\g{\mathfrak{g}}
\def\D{\Delta}
\def\U{\mathcal{U}}
\def\udot{\dot{\mathcal{U}}}
\def\Hom{\operatorname{Hom}}
\def\HOM{\operatorname{HOM}}
\def\Ind{\operatorname{Ind}}
\def\End{\operatorname{End}}
\def\Rep{\,\mbox{Rep}\,}
\def\Res{\operatorname{Res}}
\def\Ext{\operatorname{Ext}}
\def\mods{\mbox{-mod}}
\def\fmod{\mbox{-fmod}}
\def\prmod{\mbox{-pmod}}
\def\id{\mbox{id}}
\newcommand{\map}[2]{\,{:}\,#1\!\longrightarrow\!#2}
\def\h{\mathfrak{h}} 
\def\T{\mathcal{T}}
\def\TT{\mathcal{T}}
\def\inv{^{-1}}
\numberwithin{equation}{section}
\title[Braid Group Action]{On a Braid Group Action}
\address{}\email{maths@petermc.net}
\author{Peter J McNamara}
\date{\today}
\begin{document}

%

\

\

\begin{abstract}
We discuss some consequences of the invertibility of Rickard complexes in a categorified quantum group. Results include a description of reflection functors for quiver Hecke algebras and a theory of restricting categorical representations along a face.
\end{abstract}
\maketitle

\section{Introduction}

It is a classical construction to associate to any symmetrisable Cartan matrix a quantum group $U_q(\g)$. We concern ourselves with a categorified version of this construction, where a strict 2-category $\U$ is produced whose Grothendieck group is canonically identified with the idempotented form of the corresponding quantum group.

This theory of a categorical quantum group originates with the work of Chuang and Rouquier \cite{chuangrouquier} who used the notion of a categorical $\mathfrak{sl}_2$ action to construct interesting derived equivalences. The generalisation to general $\g$ came in the work of Khovanov and Lauda
\cite{kl3} and Rouquier \cite{rouquier}. They give two different presentations of a 2-category $\U$, the equivalence of which was shown by Brundan \cite{brundan}.

The braid group acts by algebra automorphisms on the quantum group $U_q(\g)$. The starting point of this paper is the theorem that this braid group action can be lifted to a braid group action on the homotopy category $K(\U)$ by autoequivalences $\T_s$. 
The autoequivalence $\T_s$ is given by conjugation by a Rickard complex, which is proved to be invertible in $K(\U)$ in \cite{vera}.
In this paper we explore the implications of the existence of this action. The applications which we study all require the theory of standard modules for quiver Hecke algebras, also known as KLR algebras (after Khovanov, Lauda and Rouquier). This theory was developed in \cite{bkm} in finite type and in \cite{mcn3} in affine type over a field of characteristic zero. The necessary facts from this theory are all recalled when they are needed. We prove that these standard modules are compatible with these autoequivalences $\T_s$ in a precise way in Proposition \ref{tofstandard}.

The main application studied in this paper is to the construction of reflection functors for quiver Hecke algebras.
These are functors which categorify the Satio reflection on the crystal $B(\infty)$, as well as Lusztig's braiding automorphism $T_s$, restricted to the positive part of the quantum group.
The $T_s$ do not preserve the positive part $U_q(\g)^+$, but do induce an isomorphism between two subspaces $\ker(_sr)\cong \ker(r_s)$ (see \S \ref{prelim} for the definitions of these objects).
 Both subspaces $\ker(_sr)$ and $\ker(r_s)$ are categorified by a Serre subcategory of the category of quiver Hecke modules, which we denote by $_s\C$ and $\C_s$ respectively. We show how the autoequivalence $\T_s$ induces an equivalence of the abelian categories $_s\C$ and $\C_s$.

This equivalence was obtained geometrically in finite simply laced type over a field of characteristic zero in 
\cite{kato}. 
This was subsequently generalised to finite simply laced type in all characteristics in \cite{geometry}.
The related work of \cite{xiaozhao,zhao} provides a geometric incarnation of $T_s$ in all types but does not produce results of the same strength that we provide here.

The second application is to a theory of restricting a categorical representation.
In \cite{facefunctors}, the notion of a face of a root system is introduced. Each face defines a root system and hence there is a quiver Hecke algebra associated to that face, which we will call $R_F$. The main result of \cite{facefunctors} is the construction of a fully faithful functor from $R_F\mods$ to $R\mods$, whose essential image is explicitly determined in terms of a subcategory of cuspidal modules if the face root system is of finite type. In this paper we go beyond the results of \cite{facefunctors} at the cost of passing to the homotopy category. For any face, we show how to restrict a categorical action on a category $\C$ to a categorical action for the face quantum group on the homotopy category $K(\C)$. In certain cases the restricted categorical action is actually an action on $\C$ and we give a criterion for checking this.

In this paper we work under the assumption that our Cartan data are simply laced.
The reason for restricting to simply-laced type is that we need some computational results in rank 2 which have only been carried out under this restriction, in particular Theorem \ref{aller}. We expect that generalising these computational results of \cite{allr}, to all types will allow the results of this paper to similarly become available in greater generality, subject still to the requirements that the necessary theory of standard modules for the relevant quiver Hecke algebras exists.

We thank the authors of \cite{allr} fo providing a draft of their paper, and O. Yacobi for bringing \cite{vera} to the author's attention.

\section{Preliminaries}\label{prelim}

Let $S$ be a set and $A=(a_{st})_{s,t\in S}$ be a simply-laced Cartan matrix. This means that $a_{ss}=2$ and if $s\neq t$, $a_{st}=a_{ts}\in\{0,-1\}$. Choose also a realisation of $A$. This is the additional data of a complex vector space $\h$, a set of linearly independent vectors $\a_s\in \h^*$ and a set of linearly independent vectors $\a_s^\vee\in \h$ such that $\langle \a_s^\vee,\a_t\rangle = a_{st}$. Define the weight lattice
\[
 P=\{\la\in\h^*\mid \langle \a_s^\vee,\la\rangle \in\Z \mbox{ for all } s\in S\}.
\]

To this data, there is associated a Kac-Moody Lie algebra quantum group $U_q(\mathfrak{g})$. We are most interested in its positive part $U_q^+(\mathfrak{g})$.

The positive part of the quantum group $U_q^+(\g)$ is the unital associative algebra generated by elements $\th_s$ for $s\in S$ subject to the quantum Serre relations
\begin{align*}
\th_s\th_t&=\th_s\th_t &\mbox{ if } a_{st} &=0 \\
(q+q\inv)\th_s\th_t\th_s &= \th_s\th_t^2+\th_t\th_s^2 &\mbox{ if } a_{st} &= -1
\end{align*}
The algebra $U_q^+(\g)$ has an integral form over $\Z[q,q\inv]$ which we call $\f$. It is the $\Z[q,q\inv]$-subalgebra generated by the divided powers $\th_s^n/[n]!$, where $[n]!=\prod_{i=1}^n\frac{q^i-q^{-i}}{q-q\inv}$. This algebra is graded by $\N S$ where $\th_s$ has degree $s$. 

Let $s\in S$. Let $r_s\map{\f}{\f}$ be the linear map defined inductively by
\begin{align*}
r_s(\th_t)&=\delta_{st} \\
r_s(xy)&=q^{\deg(y)\cdot s} r_s(x)y+x r_s(y).
\end{align*}
and let ${_s}r \map{\f}{\f}$ be the linear map inductively defined by
\begin{align*}
{_s}r(\th_t)&=\delta_{st} \\
{_s}r(xy)&= {_s}r(x)y+q^{\deg(x)\cdot s} x {_s}r(y).
\end{align*}
We will need $r_s$ and ${_s}r$ in \S \ref{sec:reflection}.

%

Given two objects $X$ and $Y$ in a graded category, we write $qX$ for the grading shift of $X$, $\Hom(X,Y)$ for the degree zero morphisms from $X$ to $Y$ and $$\HOM(X,Y)=\bigoplus_{d\in \Z}\Hom(q^dX,Y)$$ for the graded vector space of all morphisms.

For us a 2-category will always be a strict 2-category, which is the same thing as a category enriched in categories, i.e. the homomorphisms between two objects forms a category.

\section{The 2-category}\label{sec:2cat}

Let $k$ be a field.
 For each ordered pair $(s,t)$ of distinct elements of $S$, let $v_{st}$ be a nonzero element of $k$. To this data, there is a Kac-Moody 2-category $\U$, defined using a diagrammatic presentation. It has:

{\bf Objects:} $\la\in P$.

{\bf Generating 1-morphisms}:
\[
 \E_s1_\la:\la\to \la+\a_s,\qquad \F_s1_\la:\la\to \la-\a_s
\]

{\bf Generating 2-morphisms:}
\begin{equation*}\label{udotgens}
x\in \End(\E_s1_\la), \ \tau\in \Hom(\E_s\E_t1_\la,\E_t\E_s1_\la), \ \eta\in \Hom(\id_\la,\F_s\E_s1_\la),\mbox{ and } \epsilon\in \Hom(\F_s\E_s1_\la,\id_\la)
\end{equation*}
for all choices of $s,t\in S$ and $\la\in P$, 
drawn diagrammatically as
\begin{align}\label{solid1}
x 
&= 
\mathord{
\begin{tikzpicture}[baseline = 0]
	\draw[->,thick,darkred] (0.08,-.3) to (0.08,.4);
      \node at (0.08,0.05) {$\bullet$};
   \node at (0.08,-.4) {$\scriptstyle{s}$};
\end{tikzpicture}
}
{\scriptstyle\lambda}\:,
\qquad
\tau
= 
\mathord{
\begin{tikzpicture}[baseline = 0]
	\draw[->,thick,darkred] (0.28,-.3) to (-0.28,.4);
	\draw[->,thick,darkred] (-0.28,-.3) to (0.28,.4);
   \node at (-0.28,-.4) {$\scriptstyle{s}$};
   \node at (0.28,-.4) {$\scriptstyle{t}$};
   \node at (.4,.05) {$\scriptstyle{\lambda}$};
\end{tikzpicture}
}\:,
\qquad
\eta
= 
\mathord{
\begin{tikzpicture}[baseline = 0]
	\draw[<-,thick,darkred] (0.4,0.3) to[out=-90, in=0] (0.1,-0.1);
	\draw[-,thick,darkred] (0.1,-0.1) to[out = 180, in = -90] (-0.2,0.3);
    \node at (-0.2,.4) {$\scriptstyle{s}$};
  \node at (0.3,-0.15) {$\scriptstyle{\lambda}$};
\end{tikzpicture}
}\:,\qquad
\epsilon
= 
\mathord{
\begin{tikzpicture}[baseline = 0]
	\draw[<-,thick,darkred] (0.4,-0.1) to[out=90, in=0] (0.1,0.3);
	\draw[-,thick,darkred] (0.1,0.3) to[out = 180, in = 90] (-0.2,-0.1);
    \node at (-0.2,-.2) {$\scriptstyle{s}$};
  \node at (0.3,0.4) {$\scriptstyle{\lambda}$};
\end{tikzpicture}
}.
\end{align}
These are subject to a list of relations which we will reproduce below. By work of Brundan \cite{brundan}, different choices of relations that appear in the literature give equivalent 2-categories.

Before we discuss the relations, we first explain how to compose 2-morphisms. Given two 2-morphisms, their composition is defined to be the 2-morphism obtained by stacking the first 2-morphism on top of the second, if the endpoints of the 2-morphisms match.

The first relations we impose are the isotopy relations. These state that two diagrams which are isotopic are equal. This, together with the definition of composition, imply that the identity morphisms can be drawn as vertical lines without dots or crossings.

Second, we have the quiver Hecke relations:
\begin{align}\label{qha}
\mathord{
\begin{tikzpicture}[baseline = 0]
	\draw[<-,thick,darkred] (0.25,.6) to (-0.25,-.2);
	\draw[->,thick,darkred] (0.25,-.2) to (-0.25,.6);
  \node at (-0.25,-.27) {$\scriptstyle{s}$};
   \node at (0.25,-.27) {$\scriptstyle{t}$};
  \node at (.3,.25) {$\scriptstyle{\lambda}$};
      \node at (-0.13,-0.02) {$\bullet$};
\end{tikzpicture}
}
-
\mathord{
\begin{tikzpicture}[baseline = 0]
	\draw[<-,thick,darkred] (0.25,.6) to (-0.25,-.2);
	\draw[->,thick,darkred] (0.25,-.2) to (-0.25,.6);
  \node at (-0.25,-.27) {$\scriptstyle{s}$};
   \node at (0.25,-.27) {$\scriptstyle{t}$};
  \node at (.3,.25) {$\scriptstyle{\lambda}$};
      \node at (0.13,0.42) {$\bullet$};
\end{tikzpicture}
}
&=
\mathord{
\begin{tikzpicture}[baseline = 0]
 	\draw[<-,thick,darkred] (0.25,.6) to (-0.25,-.2);
	\draw[->,thick,darkred] (0.25,-.2) to (-0.25,.6);
  \node at (-0.25,-.27) {$\scriptstyle{s}$};
   \node at (0.25,-.27) {$\scriptstyle{t}$};
  \node at (.3,.25) {$\scriptstyle{\lambda}$};
      \node at (-0.13,0.42) {$\bullet$};
\end{tikzpicture}
}
-
\mathord{
\begin{tikzpicture}[baseline = 0]
 	\draw[<-,thick,darkred] (0.25,.6) to (-0.25,-.2);
	\draw[->,thick,darkred] (0.25,-.2) to (-0.25,.6);
  \node at (-0.25,-.27) {$\scriptstyle{s}$};
   \node at (0.25,-.27) {$\scriptstyle{t}$};
  \node at (.3,.25) {$\scriptstyle{\lambda}$};
      \node at (0.13,-0.02) {$\bullet$};
\end{tikzpicture}
}
=
\left\{
\begin{array}{ll}
\mathord{
\begin{tikzpicture}[baseline = 0]
 	\draw[->,thick,darkred] (0.08,-.3) to (0.08,.4);
	\draw[->,thick,darkred] (-0.28,-.3) to (-0.28,.4);
   \node at (-0.28,-.4) {$\scriptstyle{s}$};
   \node at (0.08,-.4) {$\scriptstyle{t}$};
 \node at (.28,.06) {$\scriptstyle{\lambda}$};
\end{tikzpicture}
}
&\text{if $s=t$,}\\
0&\text{otherwise,}\\
\end{array}
\right.
\end{align}
\begin{align}
\mathord{
\begin{tikzpicture}[baseline = 0]
	\draw[->,thick,darkred] (0.28,.4) to[out=90,in=-90] (-0.28,1.1);
	\draw[->,thick,darkred] (-0.28,.4) to[out=90,in=-90] (0.28,1.1);
	\draw[-,thick,darkred] (0.28,-.3) to[out=90,in=-90] (-0.28,.4);
	\draw[-,thick,darkred] (-0.28,-.3) to[out=90,in=-90] (0.28,.4);
  \node at (-0.28,-.4) {$\scriptstyle{s}$};
  \node at (0.28,-.4) {$\scriptstyle{t}$};
   \node at (.43,.4) {$\scriptstyle{\lambda}$};
\end{tikzpicture}
}
=
\left\{
\begin{array}{ll}
0&\text{if $i=j$,}\\
v_{st}\mathord{
\begin{tikzpicture}[baseline = 0]
	\draw[->,thick,darkred] (0.08,-.3) to (0.08,.4);
	\draw[->,thick,darkred] (-0.28,-.3) to (-0.28,.4);
   \node at (-0.28,-.4) {$\scriptstyle{s}$};
   \node at (0.08,-.4) {$\scriptstyle{t}$};
   \node at (.3,.05) {$\scriptstyle{\lambda}$};
\end{tikzpicture}
}&\text{if $a_{st}=0$,}\\
 v_{st}
\mathord{
\begin{tikzpicture}[baseline = 0]
	\draw[->,thick,darkred] (0.08,-.3) to (0.08,.4);
	\draw[->,thick,darkred] (-0.28,-.3) to (-0.28,.4);
   \node at (-0.28,-.4) {$\scriptstyle{s}$};
   \node at (0.08,-.4) {$\scriptstyle{t}$};
   \node at (.3,-.05) {$\scriptstyle{\lambda}$};
      \node at (-0.28,0.05) {$\bullet$};
\end{tikzpicture}
}
+
v_{ts}
\mathord{
\begin{tikzpicture}[baseline = 0]
	\draw[->,thick,darkred] (0.08,-.3) to (0.08,.4);
	\draw[->,thick,darkred] (-0.28,-.3) to (-0.28,.4);
   \node at (-0.28,-.4) {$\scriptstyle{s}$};
   \node at (0.08,-.4) {$\scriptstyle{t}$};
   \node at (.3,-.05) {$\scriptstyle{\lambda}$};
     \node at (0.08,0.05) {$\bullet$};
\end{tikzpicture}
}
&\text{otherwise,}\\
\end{array}
\right. \label{now}
\end{align}

\begin{align}
\mathord{
\begin{tikzpicture}[baseline = 0]
	\draw[<-,thick,darkred] (0.45,.8) to (-0.45,-.4);
	\draw[->,thick,darkred] (0.45,-.4) to (-0.45,.8);
        \draw[-,thick,darkred] (0,-.4) to[out=90,in=-90] (-.45,0.2);
        \draw[->,thick,darkred] (-0.45,0.2) to[out=90,in=-90] (0,0.8);
   \node at (-0.45,-.45) {$\scriptstyle{s}$};
   \node at (0,-.45) {$\scriptstyle{t}$};
  \node at (0.45,-.45) {$\scriptstyle{u}$};
   \node at (.5,-.1) {$\scriptstyle{\lambda}$};
\end{tikzpicture}
}
\!-
\!\!\!
\mathord{
\begin{tikzpicture}[baseline = 0]
	\draw[<-,thick,darkred] (0.45,.8) to (-0.45,-.4);
	\draw[->,thick,darkred] (0.45,-.4) to (-0.45,.8);
        \draw[-,thick,darkred] (0,-.4) to[out=90,in=-90] (.45,0.2);
        \draw[->,thick,darkred] (0.45,0.2) to[out=90,in=-90] (0,0.8);
   \node at (-0.45,-.45) {$\scriptstyle{s}$};
   \node at (0,-.45) {$\scriptstyle{t}$};
  \node at (0.45,-.45) {$\scriptstyle{u}$};
   \node at (.5,-.1) {$\scriptstyle{\lambda}$};
\end{tikzpicture}
}
&=
\left\{
\begin{array}{ll}
 v_{st}
\mathord{
\begin{tikzpicture}[baseline = 0]
	\draw[->,thick,darkred] (0.44,-.3) to (0.44,.4);
	\draw[->,thick,darkred] (0.08,-.3) to (0.08,.4);
	\draw[->,thick,darkred] (-0.28,-.3) to (-0.28,.4);
   \node at (-0.28,-.4) {$\scriptstyle{s}$};
   \node at (0.08,-.4) {$\scriptstyle{t}$};
   \node at (0.44,-.4) {$\scriptstyle{u}$};
  \node at (.6,-.1) {$\scriptstyle{\lambda}$};
     \node at (-0.28,0.05) {$\bullet$};
\end{tikzpicture}
}
+ v_{st}
\mathord{
\begin{tikzpicture}[baseline = 0]
	\draw[->,thick,darkred] (0.44,-.3) to (0.44,.4);
	\draw[->,thick,darkred] (0.08,-.3) to (0.08,.4);
	\draw[->,thick,darkred] (-0.28,-.3) to (-0.28,.4);
   \node at (-0.28,-.4) {$\scriptstyle{s}$};
   \node at (0.08,-.4) {$\scriptstyle{t}$};
   \node at (0.44,-.4) {$\scriptstyle{u}$};
  \node at (.6,-.1) {$\scriptstyle{\lambda}$};
     \node at (0.44,0.05) {$\bullet$};
\end{tikzpicture}
}
&\text{if $s=u \neq t$,}\\
0&\text{otherwise.}
\end{array}\label{qhalast}
\right.\end{align}

Third, we have the right adjunction relations
\begin{align}\label{rightadj}
\mathord{
\begin{tikzpicture}[baseline = 0]
  \draw[->,thick,darkred] (0.3,0) to (0.3,.4);
	\draw[-,thick,darkred] (0.3,0) to[out=-90, in=0] (0.1,-0.4);
	\draw[-,thick,darkred] (0.1,-0.4) to[out = 180, in = -90] (-0.1,0);
	\draw[-,thick,darkred] (-0.1,0) to[out=90, in=0] (-0.3,0.4);
	\draw[-,thick,darkred] (-0.3,0.4) to[out = 180, in =90] (-0.5,0);
  \draw[-,thick,darkred] (-0.5,0) to (-0.5,-.4);
   \node at (-0.5,-.5) {$\scriptstyle{s}$};
   \node at (0.5,0) {$\scriptstyle{\lambda}$};
\end{tikzpicture}
}
&=
\mathord{\begin{tikzpicture}[baseline=0]
  \draw[->,thick,darkred] (0,-0.4) to (0,.4);
   \node at (0,-.5) {$\scriptstyle{s}$};
   \node at (0.2,0) {$\scriptstyle{\lambda}$};
\end{tikzpicture}
},\qquad
\mathord{
\begin{tikzpicture}[baseline = 0]
  \draw[->,thick,darkred] (0.3,0) to (0.3,-.4);
	\draw[-,thick,darkred] (0.3,0) to[out=90, in=0] (0.1,0.4);
	\draw[-,thick,darkred] (0.1,0.4) to[out = 180, in = 90] (-0.1,0);
	\draw[-,thick,darkred] (-0.1,0) to[out=-90, in=0] (-0.3,-0.4);
	\draw[-,thick,darkred] (-0.3,-0.4) to[out = 180, in =-90] (-0.5,0);
  \draw[-,thick,darkred] (-0.5,0) to (-0.5,.4);
   \node at (-0.5,.5) {$\scriptstyle{s}$};
   \node at (0.5,0) {$\scriptstyle{\lambda}$};
\end{tikzpicture}
}
=
\mathord{\begin{tikzpicture}[baseline=0]
  \draw[<-,thick,darkred] (0,-0.4) to (0,.4);
   \node at (0,.5) {$\scriptstyle{s}$};
   \node at (0.2,0) {$\scriptstyle{\lambda}$};
\end{tikzpicture}
},
\end{align}

In order to describe the remaining relations, we first introduce a new 2-morphism.
\[
\mathord{
\begin{tikzpicture}[baseline = 0]
	\draw[<-,thick,darkred] (0.28,-.3) to (-0.28,.4);
	\draw[->,thick,darkred] (-0.28,-.3) to (0.28,.4);
   \node at (-0.28,-.4) {$\scriptstyle{t}$};
   \node at (-0.28,.5) {$\scriptstyle{s}$};
   \node at (.4,.05) {$\scriptstyle{\lambda}$};
\end{tikzpicture}
}
:=
\mathord{
\begin{tikzpicture}[baseline = 0]
	\draw[->,thick,darkred] (0.3,-.5) to (-0.3,.5);
	\draw[-,thick,darkred] (-0.2,-.2) to (0.2,.3);
        \draw[-,thick,darkred] (0.2,.3) to[out=50,in=180] (0.5,.5);
        \draw[->,thick,darkred] (0.5,.5) to[out=0,in=90] (0.8,-.5);
        \draw[-,thick,darkred] (-0.2,-.2) to[out=230,in=0] (-0.5,-.5);
        \draw[-,thick,darkred] (-0.5,-.5) to[out=180,in=-90] (-0.8,.5);
  \node at (-0.8,.6) {$\scriptstyle{s}$};
   \node at (0.28,-.6) {$\scriptstyle{t}$};
   \node at (1.05,.05) {$\scriptstyle{\lambda}$};
\end{tikzpicture}
}
:\E_t \F_s 1_\lambda \to \F_s \E_t 1_\lambda.
\]
To finish, we require that the following $2$-morphisms are isomorphisms:
\begin{align}\label{rightcross}
\mathord{
\begin{tikzpicture}[baseline = 0]
	\draw[<-,thick,darkred] (0.28,-.3) to (-0.28,.4);
	\draw[->,thick,darkred] (-0.28,-.3) to (0.28,.4);
   \node at (-0.28,-.4) {$\scriptstyle{t}$};
   \node at (-0.28,.5) {$\scriptstyle{s}$};
   \node at (.4,.05) {$\scriptstyle{\lambda}$};
\end{tikzpicture}
}
&:\E_t \F_s 1_\lambda \stackrel{\sim}{\rightarrow} \F_s \E_t 1_\lambda
&\text{if $s \neq t$,}\\
\label{inv2}
\mathord{
\begin{tikzpicture}[baseline = 0]
	\draw[<-,thick,darkred] (0.28,-.3) to (-0.28,.4);
	\draw[->,thick,darkred] (-0.28,-.3) to (0.28,.4);
   \node at (-0.28,-.4) {$\scriptstyle{s}$};
   \node at (-0.28,.5) {$\scriptstyle{s}$};
   \node at (.4,.05) {$\scriptstyle{\lambda}$};
\end{tikzpicture}
}
\oplus
\bigoplus_{n=0}^{\langle \a_s^\vee,\lambda\rangle-1}
\mathord{
\begin{tikzpicture}[baseline = 0]
	\draw[<-,thick,darkred] (0.4,0) to[out=90, in=0] (0.1,0.4);
	\draw[-,thick,darkred] (0.1,0.4) to[out = 180, in = 90] (-0.2,0);
    \node at (-0.2,-.1) {$\scriptstyle{s}$};
  \node at (0.3,0.5) {$\scriptstyle{\lambda}$};
      \node at (-0.3,0.2) {$\scriptstyle{n}$};
      \node at (-0.15,0.2) {$\bullet$};
\end{tikzpicture}
}
&:
\E_s \F_s 1_\lambda\stackrel{\sim}{\rightarrow}
\F_s \E_s 1_\lambda \oplus 1_\lambda^{\oplus \langle \a_s^\vee,\lambda\rangle}
&\text{if $\langle \a_s^\vee,\lambda\rangle \geq
  0$},\\
\mathord{
\begin{tikzpicture}[baseline = 0]
	\draw[<-,thick,darkred] (0.28,-.3) to (-0.28,.4);
	\draw[->,thick,darkred] (-0.28,-.3) to (0.28,.4);
   \node at (-0.28,-.4) {$\scriptstyle{s}$};
   \node at (-0.28,.5) {$\scriptstyle{s}$};
   \node at (.4,.05) {$\scriptstyle{\lambda}$};
\end{tikzpicture}
}
\oplus
\bigoplus_{n=0}^{-\langle \a_s^\vee,\lambda\rangle-1}
\mathord{
\begin{tikzpicture}[baseline = 0]
	\draw[<-,thick,darkred] (0.4,0.2) to[out=-90, in=0] (0.1,-.2);
	\draw[-,thick,darkred] (0.1,-.2) to[out = 180, in = -90] (-0.2,0.2);
    \node at (-0.2,.3) {$\scriptstyle{s}$};
  \node at (0.3,-0.25) {$\scriptstyle{\lambda}$};
      \node at (0.55,0) {$\scriptstyle{n}$};
      \node at (0.38,0) {$\bullet$};
\end{tikzpicture}
}
&
:\E_s \F_s 1_\lambda \oplus 
1_\lambda^{\oplus -\langle \a_s^\vee,\lambda\rangle}
\stackrel{\sim}{\rightarrow}
 \F_s \E_s 1_\lambda&\text{if $\langle \a_s^\vee,\lambda\rangle \leq
  0$}.\label{inv3}
\end{align}
where the label $n$ next to a dot means that this strand carries $n$ dots.

In more detail, the meaning of these last relations is
that $\U$ contains additional generators which are
two-sided inverses to the $2$-morphisms described in (\ref{rightcross})--(\ref{inv3}). 
They are $L\in \Hom(\F_s\E_t 1_\la, \E_t\F_s 1_\la)$, $\psi_n\in\Hom(\id_\la,\E_s\F_s 1_{\la})$ for $0\leq n<\langle\a_s^\vee,\la\rangle$ and $\varphi_m\in \Hom(\F_s\E_s1_\la,\id_\la)$ for $0\leq m<-\langle \a_s^\vee,\la\rangle$, and
we draw them as
\begin{align*}
L&=
\mathord{
\begin{tikzpicture}[baseline = 0]
	\draw[->,thick,darkred] (0.18,-.15) to (-0.18,.3);
	\draw[<-,thick,darkred] (-0.18,-.15) to (0.18,.3);
   \node at (0.18,.4) {$\scriptstyle{s}$};
   \node at (0.18,-.25) {$\scriptstyle{t}$};
\end{tikzpicture}
}\!
{\scriptstyle\la}\: \qquad
\psi_n=
\mathord{
\begin{tikzpicture}[baseline = -2]
	\draw[-,thick,darkred] (0.3,0.2) to[out=-90, in=0] (0.1,-0.1);
	\draw[->,thick,darkred] (0.1,-0.1) to[out = 180, in = -90] (-0.1,0.2);
    \node at (0.3,.3) {$\scriptstyle{s}$};
      \node at (0.1,-0.27) {$\scriptstyle{n}$};
      \node at (0.1,-0.09) {$\scriptscriptstyle{\spadesuit}$};
\end{tikzpicture}
}\!
{\scriptstyle\lambda}\: \qquad
\varphi_m=
\mathord{
\begin{tikzpicture}[baseline = -3]
	\draw[-,thick,darkred] (0.3,-0.1) to[out=90, in=0] (0.1,0.2);
	\draw[->,thick,darkred] (0.1,0.2) to[out = 180, in = 90] (-0.1,-0.1);
    \node at (0.3,-.2) {$\scriptstyle{s}$};
      \node at (0.12,.35999) {$\scriptstyle{m}$};
      \node at (0.1,0.2) {$\scriptscriptstyle{\spadesuit}$};
\end{tikzpicture}
}
\,{\scriptstyle\la}\: .
\end{align*}
%
They satisfy the following relations: 
\begin{align*}
\mathord{
\begin{tikzpicture}[baseline = 0]
	\draw[->,thick,darkred] (0.28,-.3) to (-0.28,.4);
	\draw[<-,thick,darkred] (-0.28,-.3) to (0.28,.4);
   \node at (0.28,-.4) {$\scriptstyle{t}$};
   \node at (0.28,.5) {$\scriptstyle{s}$};
   \node at (.4,.05) {$\scriptstyle{\lambda}$};
\end{tikzpicture}
}
&= \bigg(\mathord{
\begin{tikzpicture}[baseline = 0]
	\draw[<-,thick,darkred] (0.28,-.3) to (-0.28,.4);
	\draw[->,thick,darkred] (-0.28,-.3) to (0.28,.4);
   \node at (-0.28,-.4) {$\scriptstyle{t}$};
   \node at (-0.28,.5) {$\scriptstyle{s}$};
   \node at (.4,.05) {$\scriptstyle{\lambda}$};
\end{tikzpicture}
}
\bigg)^{-1}
&&\text{if $s \neq t$,}\\
-\mathord{
\begin{tikzpicture}[baseline = 0]
	\draw[->,thick,darkred] (0.28,-.3) to (-0.28,.4);
	\draw[<-,thick,darkred] (-0.28,-.3) to (0.28,.4);
   \node at (0.28,-.4) {$\scriptstyle{s}$};
   \node at (0.28,.5) {$\scriptstyle{s}$};
   \node at (.4,.05) {$\scriptstyle{\lambda}$};
\end{tikzpicture}
}\oplus\bigoplus_{n=0}^{\langle \a_s^\vee,\lambda\rangle-1}
\mathord{
\begin{tikzpicture}[baseline = 0]
	\draw[-,thick,darkred] (0.4,0.4) to[out=-90, in=0] (0.1,0);
	\draw[->,thick,darkred] (0.1,0) to[out = 180, in = -90] (-0.2,0.4);
    \node at (0.4,.5) {$\scriptstyle{s}$};
  \node at (0.5,0.15) {$\scriptstyle{\lambda}$};
      \node at (0.12,-0.2) {$\scriptstyle{n}$};
      \node at (0.12,0.01) {$\scriptstyle{\spadesuit}$};
\end{tikzpicture}
}
&=
\bigg(
\!\mathord{
\begin{tikzpicture}[baseline = 0]
	\draw[<-,thick,darkred] (0.28,-.3) to (-0.28,.4);
	\draw[->,thick,darkred] (-0.28,-.3) to (0.28,.4);
   \node at (-0.28,-.4) {$\scriptstyle{s}$};
   \node at (-0.28,.5) {$\scriptstyle{s}$};
   \node at (.4,.05) {$\scriptstyle{\lambda}$};
\end{tikzpicture}
}
\oplus
\bigoplus_{n=0}^{\langle \a_s^\vee,\lambda\rangle-1}
\mathord{
\begin{tikzpicture}[baseline = 0]
	\draw[<-,thick,darkred] (0.4,0) to[out=90, in=0] (0.1,0.4);
	\draw[-,thick,darkred] (0.1,0.4) to[out = 180, in = 90] (-0.2,0);
    \node at (-0.2,-.1) {$\scriptstyle{s}$};
  \node at (0.3,0.5) {$\scriptstyle{\lambda}$};
      \node at (-0.3,0.2) {$\scriptstyle{n}$};
      \node at (-0.15,0.2) {$\bullet$};
\end{tikzpicture}
}
\bigg)^{-1}
&&
\text{if $\langle \a_s^\vee,\lambda \rangle \geq 0$,}\\
-\mathord{
\begin{tikzpicture}[baseline = 0]
	\draw[->,thick,darkred] (0.28,-.3) to (-0.28,.4);
	\draw[<-,thick,darkred] (-0.28,-.3) to (0.28,.4);
   \node at (0.28,-.4) {$\scriptstyle{s}$};
   \node at (0.28,.5) {$\scriptstyle{s}$};
   \node at (.4,.05) {$\scriptstyle{\lambda}$};
\end{tikzpicture}
}\oplus\bigoplus_{m=0}^{-\langle \a_s^\vee,\lambda\rangle-1}
\mathord{
\begin{tikzpicture}[baseline = 0]
	\draw[-,thick,darkred] (0.4,0) to[out=90, in=0] (0.1,0.4);
	\draw[->,thick,darkred] (0.1,0.4) to[out = 180, in = 90] (-0.2,0);
    \node at (0.4,-.1) {$\scriptstyle{s}$};
  \node at (0.5,0.45) {$\scriptstyle{\lambda}$};
      \node at (0.12,0.6) {$\scriptstyle{m}$};
      \node at (0.12,0.4) {$\scriptstyle{\clubsuit}$};
\end{tikzpicture}
}
&=
\bigg(
\!\mathord{
\begin{tikzpicture}[baseline = 0]
	\draw[<-,thick,darkred] (0.28,-.3) to (-0.28,.4);
	\draw[->,thick,darkred] (-0.28,-.3) to (0.28,.4);
   \node at (-0.28,-.4) {$\scriptstyle{s}$};
   \node at (-0.28,.5) {$\scriptstyle{s}$};
   \node at (.4,.05) {$\scriptstyle{\lambda}$};
\end{tikzpicture}
}
\oplus
\!\bigoplus_{m=0}^{-\langle h_i,\lambda\rangle-1}
\mathord{
\begin{tikzpicture}[baseline = 0]
	\draw[<-,thick,darkred] (0.4,0.4) to[out=-90, in=0] (0.1,0);
	\draw[-,thick,darkred] (0.1,0) to[out = 180, in = -90] (-0.2,0.4);
    \node at (-0.2,.5) {$\scriptstyle{s}$};
  \node at (0.4,-0.05) {$\scriptstyle{\lambda}$};
      \node at (0.56,0.2) {$\scriptstyle{m}$};
      \node at (0.37,0.2) {$\bullet$};
\end{tikzpicture}
}
\bigg)^{-1}
&&
\text{if $\langle \a_s^\vee ,\lambda \rangle \leq 0$.}
\end{align*}

This concludes the definition of $\U$. The morphism spaces in $\U$ are all $\Z$-graded, the generators in (\ref{solid1}) have degrees $2$, $a_{st}$, $1+\langle \a_s^\vee \la\rangle$ and $1-\langle \a_s^\vee \la\rangle$ respectively.

Our next goal is to introduce 
 some more 2-morphisms in order to state a non-degeneracy theorem for $\U$.
 
The leftward caps are defined by
\[
\mathord{
\begin{tikzpicture}[baseline = 0]
	\draw[-,thick,darkred] (0.4,0.4) to[out=-90, in=0] (0.1,0);
	\draw[->,thick,darkred] (0.1,0) to[out = 180, in = -90] (-0.2,0.4);
    \node at (0.4,.55) {$\scriptstyle{s}$};
  \node at (0.5,0.15) {$\scriptstyle{\lambda}$};
\end{tikzpicture}
}
= \left\{
 \begin{array}{lll}
 \mathord{
\begin{tikzpicture}[baseline = 0]
	\draw[-,thick,darkred] (0.4,0.4) to[out=-90, in=0] (0.1,0);
	\draw[->,thick,darkred] (0.1,0) to[out = 180, in = -90] (-0.2,0.4);
    \node at (0.4,.5) {$\scriptstyle{s}$};
  \node at (0.5,0.15) {$\scriptstyle{\lambda}$};
      \node at (0.12,-0.3) {$\scriptstyle{\langle \a_s^\vee,\la \rangle -1}$};
      \node at (0.12,0.01) {$\scriptstyle{\spadesuit}$};
\end{tikzpicture}
}
 &&
\text{if $\langle \a_s^\vee,\lambda \rangle > 0$.} \\
\mathord{
\begin{tikzpicture}[baseline = 0]
	\draw[-,thick,darkred] (0.28,.6) to[out=240,in=90] (-0.28,-.1);
	\draw[<-,thick,darkred] (-0.28,.6) to[out=300,in=90] (0.28,-0.1);
   \node at (0.28,.7) {$\scriptstyle{s}$};
   \node at (.45,.2) {$\scriptstyle{\lambda}$};
	\draw[-,thick,darkred] (0.28,-0.1) to[out=-90, in=0] (0,-0.4);
	\draw[-,thick,darkred] (0,-0.4) to[out = 180, in = -90] (-0.28,-0.1);
      \node at (0.89,-0.1) {$\scriptstyle{-\langle \a_s^\vee,\lambda\rangle}$};
      \node at (0.27,-0.1) {$\bullet$};
\end{tikzpicture}
}
&&
\text{if $\langle \a_s^\vee,\lambda \rangle \leq  0$.}
\end{array}
\right.
 \]


From these, we can define clockwise bubbles with a non-negative number of dots $\mathord{
\begin{tikzpicture}[baseline = 0]
  \draw[<-,thick,darkred] (0,0.4) to[out=180,in=90] (-.2,0.2);
  \draw[-,thick,darkred] (0.2,0.2) to[out=90,in=0] (0,.4);
 \draw[-,thick,darkred] (-.2,0.2) to[out=-90,in=180] (0,0);
  \draw[-,thick,darkred] (0,0) to[out=0,in=-90] (0.2,0.2);
 \node at (0,-.1) {$\scriptstyle{s}$};
   \node at (0.3,0.2) {$\scriptstyle{\lambda}$};
   \node at (-0.2,0.2) {$\bullet$};
   \node at (-0.4,0.2) {$\scriptstyle{r}$};
\end{tikzpicture}
}$, which are endomorphisms of $\operatorname{id}_\la$. It is convenient to extend this definition to allow a negative number of dots, as in \cite[Eq 3.8]{brundan}, a convention which we will employ.

Let $\B_\la$ be the unital commutative $k$-algebra freely generated by all bubbles  $\mathord{
\begin{tikzpicture}[baseline = 0]
  \draw[<-,thick,darkred] (0,0.4) to[out=180,in=90] (-.2,0.2);
  \draw[-,thick,darkred] (0.2,0.2) to[out=90,in=0] (0,.4);
 \draw[-,thick,darkred] (-.2,0.2) to[out=-90,in=180] (0,0);
  \draw[-,thick,darkred] (0,0) to[out=0,in=-90] (0.2,0.2);
 \node at (0,-.1) {$\scriptstyle{s}$};
   \node at (0.3,0.2) {$\scriptstyle{\lambda}$};
   \node at (-0.2,0.2) {$\bullet$};
   \node at (-0.4,0.2) {$\scriptstyle{r}$};
\end{tikzpicture}
}$, for $r\geq \langle \a_s^\vee,\la\rangle $ ($r$ here may be negative). Note that all of these generators are in positive degree. So $\B_\la$ is non-negatively graded and in degree zero is simply the ground field $k$.

We are now in a position to define the notion of nondegeneracy. Let $\ii=(i_1,i_2,\ldots,i_n)$ and $\jj=(j_1,j_2,\ldots,j_n)$ be two sequences of elements of $S$. Let $D$ be a matching of $\ii$ and $\jj$. To this matching, choose a representative diagram comprised of downward pointing crossings where any pair of strings crosses at most twice - this represents an element of $\HOM (\F_{i_1}\F_{i_2}\cdots\F_{i_n},\F_{j_1}\F_{j_2}\cdots \F_{j_n})$. Now given a matching $D$ and a sequence $\aa=(a_1,a_2,\ldots,a_n)$ of natural numbers, create a morphism $f_{D,\aa}$ by adding $a_i$ dots onto the bottom of the $i$-th strand from the left of this representative diagram for $1\leq i \leq n$. Let $\Pi$ be a basis for $\B_\la$.

\begin{definition}
The 2-category $\U$ is said to be nondegenerate if for all $\ii,\jj$, the set 
\[
\{f_{D,\aa}\otimes \pi \mid D\in D(\ii,\jj),\aa\in \N^{|\ii|}, \pi\in \Pi\}
\]
is a basis for $\HOM (\F_{i_1}\F_{i_2}\cdots\F_{i_n},\F_{j_1}\F_{j_2}\cdots \F_{j_n})$.
\end{definition}

The following fundamental theorem is important to us:

\begin{theorem}[Nondegeneracy theorem]
\label{nondegeneracy}
The 2-category $\U$ is nondegenerate.
\end{theorem}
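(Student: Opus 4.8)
The plan is to establish the two halves of the basis assertion separately: that the morphisms $f_{D,\aa}\otimes\pi$ span $\HOM(\F_{i_1}\cdots\F_{i_n},\F_{j_1}\cdots\F_{j_n})$, and that they are linearly independent. It is worth noting at the outset that although the statement concerns only Hom spaces between strings of $\F$'s, it pins down every other $2$-morphism space of $\U$ as well: any Hom space between composites of $\E$'s and $\F$'s can be rebracketed, using the biadjunctions (\ref{rightadj}) and their left-handed counterparts and the isomorphisms (\ref{rightcross})--(\ref{inv3}) to push $\E$'s past $\F$'s, into such a space tensored with bubbles. But that is a consequence, not part of what I would prove.

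For spanning I would run a normal-form algorithm on an arbitrary diagram $D$ representing a morphism $\F_{i_1}\cdots\F_{i_n}\to\F_{j_1}\cdots\F_{j_n}$. A priori $D$ contains cups, caps, curls, and floating bubbles scattered among the regions. Using the isotopy relations, the adjunctions (\ref{rightadj}), and the curl relations one first removes all curls and pulls every strand out of every cup--cap loop, leaving a pure downward diagram on the $n$ strands together with some floating bubbles. Then the quiver Hecke relations (\ref{qha})--(\ref{qhalast}) are applied to make each pair of the $n$ strands cross at most twice and to drag all dots to the bottom, reducing the strand part to a standard monomial; and the infinite Grassmannian relation together with the rule for sliding a bubble past a strand (which produces only lower-order corrections) sweeps every floating bubble into one fixed region, where it becomes a polynomial in the free generators of $\B_{\la}$. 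An evident induction on the number of crossings, curls, and bubbles shows the algorithm terminates, writing the given morphism as a $k$-combination of the $f_{D,\aa}\otimes\pi$.

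Linear independence is the real content, and there I would not argue inside $\U$ but build a $2$-representation in which these morphisms visibly act linearly independently. The candidate is the family of cyclotomic quotients: for each dominant $\Lambda\in P$ the cyclotomic quiver Hecke algebra $R^{\Lambda}(\nu)$ carries a categorical $\g$-action whose raising and lowering functors are biadjoint up to grading shift --- this is the cyclotomic quotient theorem of Kang and Kashiwara, extending the finite and affine type $A$ cases of Khovanov and Lauda \cite{kl3} --- so $\bigoplus_{\nu}R^{\Lambda}(\nu)\mods$ is a genuine $2$-representation of $\U$. Given a putative linear relation among finitely many $f_{D,\aa}\otimes\pi$, all of bounded degree, I would apply these $2$-representations and compare with two inputs. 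First, the quiver Hecke algebra $R(\nu)$ is itself nondegenerate --- its standard monomials are linearly independent, by the faithful polynomial representation of Khovanov, Lauda and Rouquier --- and for $\Lambda$ sufficiently dominant the cyclotomic surjection $R(\nu)\twoheadrightarrow R^{\Lambda}(\nu)$ is injective in the degrees occurring in the relation, since its kernel is generated in degrees that grow with $\Lambda$. Second, in those same degrees the floating bubbles act through $\B_{\la}$ faithfully, the cyclotomic relation only forcing relations among bubbles above a threshold that likewise grows with $\Lambda$; in that range the relevant Hom space of the $2$-representation therefore looks like the tensor product of its quiver Hecke part with its bubble part. Letting $\Lambda$ run over a cofinal family of dominant weights then forces every coefficient of the relation to vanish.

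The main obstacle is precisely this linear-independence input: it is not formal. It amounts to the assertion that the minimal list of relations defining $\U$ does not accidentally collapse any of these Hom spaces, and its only known proofs proceed either through the geometry of Lusztig quiver varieties and the $\Ext$-algebra description of $R(\nu)$ (finite simply-laced type, characteristic zero) or through the Kang--Kashiwara construction of the cyclotomic categorification in general symmetrisable type. A secondary, purely technical, difficulty is keeping track of the degree ranges in which the comparison maps $R(\nu)\to R^{\Lambda}(\nu)$ and $\B_{\la}\to\End(\operatorname{id}_{\la})$ are injective, so that the passage to the limit over $\Lambda$ genuinely annihilates an arbitrary finite relation; once those ranges are pinned down the rest is bookkeeping.
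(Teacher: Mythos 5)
The paper itself offers no proof of this theorem: it merely cites Khovanov--Lauda \cite[Theorem 1.3]{kl3} for type $A$, Webster \cite[Theorem A]{unfurling}, and DuPont \cite[Theorem 3.5.3]{dupont}. So your proposal can only be measured against those sources, not against an argument in the text.

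Your strategy---normal form for spanning, then cyclotomic $2$-representations for linear independence---is a genuine route and is closest in spirit to Webster's approach. It is quite different from the other two cited proofs: Khovanov and Lauda's type $A$ argument builds an explicit $2$-representation on equivariant cohomology of partial flag varieties rather than on cyclotomic quotients, and DuPont's proof is purely syntactic, running a confluent higher-dimensional rewriting system on the diagrammatic presentation with no recourse to $2$-representations at all. You are right that the spanning half is an elementary normal-form induction, and right that the linear-independence half is where the content lives. One thing worth flagging on circularity: the Kang--Kashiwara cyclotomic categorification theorem, which you invoke to obtain the biadjoint $2$-representation on $\bigoplus_\nu R^\Lambda(\nu)\mods$, is logically independent of the nondegeneracy of $\U$, so the reliance is legitimate; but since it is a deep result it is worth saying explicitly that you are taking it as a black box, which makes your proof less self-contained than DuPont's.

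The one place where your sketch is thinner than it lets on is the assertion that, in a fixed bounded range of degrees and for $\Lambda$ sufficiently dominant, the relevant Hom space of the cyclotomic $2$-representation ``looks like the tensor product of its quiver Hecke part with its bubble part.'' In a cyclotomic quotient the bubbles do not act on a separate tensor factor: they land in the graded centre of $R^\Lambda(\nu)$ and multiply against the same underlying module on which the $f_{D,\aa}$ act, so there is genuine interaction to untangle. What you actually need is a lower bound on the dimension of $\End(\F_{i_1}\cdots\F_{i_n}1_\la)$ in the $2$-representation, in each degree $\le N$, matching the putative basis count, and this requires controlling (i) the kernel of $R(\nu)\twoheadrightarrow R^\Lambda(\nu)$, (ii) the algebraic independence of the bubble images as central elements, and (iii) the absence of unexpected low-degree relations between the two. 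Point (iii) is not a consequence of (i) and (ii) separately; it is exactly the nontrivial part of Webster's argument (and, in type $A$, of Khovanov--Lauda's explicit computation on flag varieties). Calling this ``bookkeeping'' undersells it---it is the crux of the proof, and should be promoted from the closing caveat to a named lemma with its own inductive proof before the rest of the argument can be trusted.
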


This result was proved by Khovanov and Lauda \cite[Theorem 1.3]{kl3} in type $A$, and by Webster \cite[Theorem A]{unfurling} and DuPont \cite[Theorem 3.5.3]{dupont} in general. Although only stated as a result about homomorphism spaces between products of $\F$'s, standard adjunction techniques allow us to find from this a basis of all morphism spaces in $\U$.

Out of $\U$, for $*\in \{b,+,-,\emptyset\}$ we construct a new 2-category $K^*(\udot)$. Like $\U$, its objects are elements of $P$. For any two elements $\la,\mu\in P$, the morphism category in $K^*(\udot)$ is defined to be
\[
 \Hom_{K^*(\udot)}(\la,\mu)=\operatorname{Kar}(K^*(\Hom_{\U}(\la,\mu)))
\]
 where $K^\ast$ stands for taking the (bounded, bounded above, bounded below or unbounded) homotopy category and $\operatorname{Kar}$ refers to taking the Karoubian envelope.

\section{Rouquier Complexes}

Let $n$ be an integer. The Nil-Hecke algebra is the graded algebra generated by elements $x_1,x_2,\ldots,x_n$ in degree 2, and $\partial_1,\partial_2,\ldots,\partial_{n-1}$ in degree $-2$, subject to the following relations:
\begin{align*}
x_i x_j&=x_j x_i \\
\partial_i^2=0 \\
\partial_i \partial_{i+1}\partial_i &= \partial_{i+1}\partial_i\partial_{i+1} \\
\partial_i x_j &= x_j \partial_i  \quad \text{if}\quad |i-j|>1 \\
\partial_i x_{i+1} &= x_{i+1}\partial_i \pm 1 \\
\partial_{i+1} x_i &= x_i\partial_{i+1} \pm 1.
\end{align*}
It is known that there is an isomorphism of algebras
\[
NH_n \cong \operatorname{Mat}_{[n]!}(k[x_1,\ldots,x_n]^{S_n}).
\]
Let $e_n=x_2x_3^2,\ldots x_n^{n-1}(\partial_1\partial_2\cdots\partial_{n-1})(\partial_1\partial_2\cdots\partial_{n-2})\cdots(\partial_1\partial_2)(\partial_1)$. Then $e_n$ is a primitive idempotent in $NH_n$. Let $s\in S$. There is a natural map $\psi:NH_n\to \End(\F_s^n)$ sending $x_i$ to the dot acting on the $i$-th factor and $\partial_i$ to the crossing between the $i$-th and $(i+1)$-st factor. Inside $\udot$, we define the divided power
\[
\F_s^{(n)} = q^{-\frac{n(n-1)}{2}}\psi(e_n)\F_s^n
\]
and similarly for $\E_s^{(n)}$. If $n<0$, we say these divided powers are zero.

For each $s\in S$ and $\la\in P$, we define the Rickard complex $\Theta_s 1_\la$ to be the complex of objects of $1_{s\la}\U 1_\la$ whose $r$-th component is
\[
\Theta^r_s 1_\la = q^{-r} F_s^{(\langle \a_s^\vee,\la \rangle +r)}E_s^{(r)}.
\]
The differentials of $\Theta_s 1_\la$ are given by the composition of the counit $\id \to \F_s\E_s$ together with projection by appropriate idempotents. The most important result about these Rickard complexes is the following:

\begin{theorem}\cite[Theorem 5.13]{vera}
The complex $\Theta_s$ is invertible in $K(\udot)$.
\end{theorem}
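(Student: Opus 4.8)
The plan is to reduce the statement to rank one, where it is a homotopy‑categorical form of the theorem of Chuang and Rouquier \cite{chuangrouquier}, and then transport invertibility along a $2$-functor. Fix $\la\in P$ and put $m=\langle\a_s^\vee,\la\rangle$. Every $1$-morphism occurring in $\Theta_s 1_\la$ is a composite of divided powers $\F_s^{(a)}$ and $\E_s^{(a)}$, and any differential, contracting homotopy, or homotopy equivalence relating composites of such complexes is assembled only from the dot on $\E_s$, the $ss$-crossing, the unit and counit of the $(\E_s,\F_s)$-adjunctions, and the $2$-morphisms declared invertible in (\ref{inv2})--(\ref{inv3}) for the pair $(s,s)$. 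All of these data lie in the sub-$2$-category of $\U$ generated by $\E_s$ and $\F_s$, which on each $s$-string in $P$ is a copy of the rank‑one Kac--Moody $2$-category of Lauda attached to the node $s$; under the corresponding $2$-functor $\mathcal{U}(\mathfrak{sl}_2)\to\U$ the complex $\Theta_s 1_\la$ is the image of the rank‑one Rickard complex. Since a $2$-functor carries homotopy-invertible $1$-morphisms to homotopy-invertible ones, it suffices to prove invertibility in $K(\mathcal{U}(\mathfrak{sl}_2))$, which is the content of \cite{chuangrouquier} in the form appropriate to the $2$-category and its homotopy category (cf. \cite{rouquier}).

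In the rank‑one case I would first write down the candidate inverse $\Theta_s'1_{s\la}$ explicitly, as the complex obtained from the definition of $\Theta_s$ by interchanging $\E_s\leftrightarrow\F_s$ and $q\leftrightarrow q^{-1}$ and reversing the cohomological grading (equivalently, as a right adjoint of $\Theta_s1_\la$), so that its degree‑zero term is $\E_s^{(m)}$ when $m\ge 0$. Then compute the composite $\Theta_s'\,\Theta_s\,1_\la$ as a total complex: its terms are composites of divided powers of the form $\F_s^{(a)}\E_s^{(b)}\E_s^{(c)}\F_s^{(d)}$, which one rewrites using $\E_s^{(b)}\E_s^{(c)}\cong(\E_s^{(b+c)})^{\oplus N}$ (with $N=\binom{b+c}{b}$ a $q$-binomial, up to grading shifts) and then, repeatedly, the categorified $\mathfrak{sl}_2$ commutation isomorphism $\E_s\F_s1_\mu\cong\F_s\E_s1_\mu\oplus 1_\mu^{\oplus|\langle\a_s^\vee,\mu\rangle|}$ of (\ref{inv2})--(\ref{inv3}) to move $\F_s$'s past $\E_s$'s. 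After this expansion one shows that, on a designated family of summands, the components of the differential of the total complex are isomorphisms; iterated Gaussian elimination then cancels these summands in pairs and leaves a single copy of $1_\la$ in cohomological degree $0$. That $1_\la$ is the correct answer, and the pattern of which summand cancels against which, is dictated by the identity $[\Theta_s]=T_s$ in the Grothendieck group ($T_s$ being Lusztig's braiding automorphism); the equivalence $\Theta_s\,\Theta_s'\,1_{s\la}\simeq 1_{s\la}$ follows from the symmetric computation.

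The hard part is the input to the Gaussian elimination: verifying that the designated components of the differential really are isomorphisms, equivalently exhibiting the null-homotopy on the acyclic complement. This reduces to a finite list of identities among $2$-morphisms built on $\E_s$ and $\F_s$ — tracking the unit and counit through the nil‑Hecke idempotents $e_n$ defining the divided powers and checking nonvanishing and invertibility of certain composite bubbles and curls — precisely the kind of explicit rank‑one (or, if one prefers not to reduce the rank first, rank‑two) diagrammatic computation that is imported elsewhere in this paper from \cite{allr} (cf. Theorem~\ref{aller}). A secondary technical point: $\Theta_s1_\la$ is only one‑sidedly bounded, so in the unbounded homotopy category $K(\udot)$ one must observe that the chain of Gaussian eliminations is locally finite — only finitely many summands appear in each cohomological degree — so that it is well defined and yields a genuine homotopy equivalence.

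An alternative that avoids producing the homotopy by hand is to test invertibility on a family of $2$-representations of $\U$ faithful enough to detect homotopy equivalences in $K(\udot)$ — for instance the categorical action on graded cyclotomic quiver Hecke module categories, whose faithfulness rests on the nondegeneracy Theorem~\ref{nondegeneracy}. On such a module category the $s$-part is a genuine categorical $\mathfrak{sl}_2$-action, so the Chuang--Rouquier theorem applies directly; the remaining work is then to make precise the transfer of invertibility from this faithful family back to $K(\udot)$ itself. In either approach the computational core is the rank‑one calculation of Chuang and Rouquier, and the role of the simply‑laced hypothesis is exactly to make the supporting rank‑two identities of \cite{allr} available.
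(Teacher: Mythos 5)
The paper does not prove this statement; it simply cites Vera \cite{vera}. Your proposal attempts a proof from scratch, and while your reduction to rank one via the $2$-functor $\mathcal{U}(\mathfrak{sl}_2)\to\U$ attached to the node $s$ is sound (a strict $2$-functor applied termwise to complexes carries a $1$-morphism invertible in the homotopy category to one that is again invertible), there is a genuine gap in how you discharge the rank-one case.

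You write that invertibility in $K(\mathcal{U}(\mathfrak{sl}_2))$ ``is the content of \cite{chuangrouquier} in the form appropriate to the $2$-category and its homotopy category.'' This is not so. Chuang and Rouquier prove that the Rickard complex induces a self-equivalence of the derived (or homotopy) category of any \emph{abelian} $2$-representation of $\mathfrak{sl}_2$; this is a statement about every module category, not about the $2$-category $\mathcal{U}(\mathfrak{sl}_2)$ itself. That $\Theta_s$ is invertible already in $K(\mathcal{U}(\mathfrak{sl}_2))$ is strictly stronger — it is a universal statement from which the Chuang--Rouquier equivalences follow, not the other way around — and it is precisely this universal statement that constitutes Vera's Theorem~5.13. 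The implication ``invertible after applying every $2$-representation $\Rightarrow$ invertible in $K(\mathcal{U})$'' is not automatic; it needs a jointly faithful family of $2$-representations with enough structure to \emph{detect contractibility of complexes}, which is what Vera establishes (for $\mathfrak{sl}_2$, using cyclotomic quotients and ultimately the nondegeneracy theorem). Your final paragraph describes exactly this strategy and is the right one, but the sentence ``the remaining work is then to make precise the transfer of invertibility from this faithful family back to $K(\udot)$ itself'' is not a secondary technical point — it is the entire content of Vera's paper, and without it your argument does not close. Your middle two paragraphs sketching a direct Gaussian-elimination proof are speculative: no such hands-on proof appears in the literature, and the ``hard part'' you flag (exhibiting the null-homotopy) is exactly where the difficulty lives; as it stands it is a plan rather than a proof. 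In short, to justify this theorem you must either carry out the faithful-$2$-representation argument in full, or simply cite \cite[Theorem~5.13]{vera} as the paper does.
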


From the above result, we define a triangulated autoequivalence $\T_s$ of $K^b(\udot)$ for each $s\in S$ by conjugating by the Rickard complex $\Th_s$.

\begin{theorem}\cite{allr}\label{aller}
If $s$ and $t$ are connected by an edge, then
\begin{equation}\label{tsdef}
 \T_s(\F_t\one_\la)= q\F_t \F_s 1_{s\la} \to \clubsuit \F_s \F_t 1_{s\la},
\end{equation}
\begin{equation}\label{tsinv}
 \T_s(q\F_s \F_t 1_{\la} \to \clubsuit \F_t \F_s 1_\la) = \F_t 1_{s\la}
\end{equation}
where the differential in each case is the downward crossing, and the
symbol $\clubsuit$ is used to denote the term in homological degree zero.
\end{theorem}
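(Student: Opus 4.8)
The plan is to prove the first identity (\ref{tsdef}) by an explicit computation of the Rickard complex conjugate, and to deduce (\ref{tsinv}) from the same machinery applied to $\Theta_s^{-1}$. Since $\Theta_s$ is invertible in $K(\udot)$, proving (\ref{tsdef}) is the same as producing a homotopy equivalence
\[
\Theta_s 1_{\la-\a_t}\circ \F_t 1_\la \;\simeq\; C \circ \Theta_s 1_\la
\qquad \text{in } K^b(\udot),
\]
where $C$ denotes the two-term complex $[\, q\F_t\F_s 1_{s\la} \to \F_s\F_t 1_{s\la} \,]$ with $\F_s\F_t 1_{s\la}$ in homological degree zero and the downward $s,t$-crossing as differential; one then passes from this equivalence to (\ref{tsdef}) by composing on the right with $\Theta_s^{-1}$ and using $\Theta_s\Theta_s^{-1}\simeq\id$. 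Everything happens inside the $2$-subcategory of $\udot$ generated by the $\sl_2$-data at $s$ together with the single extra generating $1$-morphism $\F_t$, so this is a rank-two computation; the hypothesis that the Cartan data are simply laced enters only in that the sole case to treat is $a_{st}=-1$, which is exactly the regime in which the rank-two identities of \cite{allr} --- for the crossings, their inverses, the $2$-morphism $L$ of (\ref{rightcross}), and the $\sl_2$ cups and caps --- are available.

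First I would write $\Theta_s 1_{\la-\a_t}\circ\F_t 1_\la$ out explicitly. Set $a=\langle \a_s^\vee,\la\rangle$; the key numerical input is $\langle \a_s^\vee,\la-\a_t\rangle = a+1$, which is where $a_{st}=-1$ is used and which is the length-one defect responsible for the answer having exactly two terms. The homological degree $r$ component of $\Theta_s 1_{\la-\a_t}\circ\F_t 1_\la$ is $q^{-r}\F_s^{(a+1+r)}\E_s^{(r)}\F_t 1_\la$, with differentials built from the counit $\id\to\F_s\E_s$ composed with projections by nilHecke idempotents. Using the isomorphism $\E_s\F_t 1_\mu \cong \F_t\E_s 1_\mu$, valid because $s\neq t$ by (\ref{rightcross}) and compatible with the nilHecke idempotents (which involve only $s$-coloured strands and hence commute with $\F_t$), rewrite each component as $q^{-r}\F_s^{(a+1+r)}\F_t\E_s^{(r)}1_\la$. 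Then, out of the $\sl_2$ cups and caps together with the $s,t$-crossing, one constructs an explicit chain map between this complex and $C\circ\Theta_s 1_\la$ and shows that its mapping cone is contractible by Gaussian elimination --- the divided-power identity that $\F_s^{(a+1+r)}$ is a direct summand of $\F_s\F_s^{(a+r)}$ being what makes the cancellations go through.

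Two points then remain. To pin down the surviving differential I would invoke Theorem \ref{nondegeneracy}: it forces the space of degree-zero $2$-morphisms $q\F_t\F_s 1_{s\la}\to\F_s\F_t 1_{s\la}$ to be one-dimensional, spanned by the downward crossing, and the differential produced by the computation cannot vanish because $\T_s(\F_t 1_\la)$ --- being the image under the autoequivalence $\T_s$ of the indecomposable $1$-morphism $\F_t 1_\la$ --- is itself indecomposable; hence it is a nonzero scalar multiple of the crossing, and rescaling yields the stated normalisation. For (\ref{tsinv}), applying $\T_s^{-1}$ to both sides turns it into the assertion $\T_s^{-1}(\F_t 1_{s\la}) = [\, q\F_s\F_t 1_\la \to \F_t\F_s 1_\la \,]$, which is the mirror image of (\ref{tsdef}) and is established by running the computation above with the inverse Rickard complex $\Theta_s^{-1}$ --- whose components are obtained from those of $\Theta_s$ by interchanging the roles of $\E_s$ and $\F_s$ and reversing the homological direction --- in place of $\Theta_s$.

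The main obstacle will be the coherence of the Gaussian elimination: one must verify that the contractible pieces cancel compatibly across all homological degrees at once, so that the differentials of what survives really do assemble into $C\circ\Theta_s 1_\la$ rather than some twist of it. This rests on the precise rank-two relations among the crossing, the $2$-morphism $L$, and the $\sl_2$ cups and caps --- exactly the computations of \cite{allr} which at present have been carried out only in the simply-laced case, and which is the reason this theorem is stated with that restriction. A secondary but unavoidable nuisance is bookkeeping the $q$-grading shifts consistently through the divided-power manipulations and in the construction of the comparison chain map.
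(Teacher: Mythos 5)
The paper does not prove Theorem~\ref{aller}: it is stated as a citation to \cite{allr} (a paper in preparation at the time of writing), and the remark immediately after the theorem points out that the necessary computations also appear in \cite[Corollary 5.4]{ck}. So there is no ``paper's own proof'' to compare against; the author deliberately treats this rank-two computation as external input, and the restriction to simply laced type is imposed precisely because that is the generality in which \cite{allr} carries out these calculations.

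That said, your outline is a sensible account of how such a direct proof would go, and it is consistent in spirit with the Rickard-complex computations in the literature. The reduction of (\ref{tsdef}) to a homotopy equivalence $\Theta_s\,\F_t \simeq C\,\Theta_s$ (so as to avoid wrestling with $\Theta_s^{-1}$ directly), the use of $\E_s\F_t \cong \F_t\E_s$ together with the observation that the nilHecke idempotents only involve $s$-coloured strands, the divided-power decomposition of $\F_s\F_s^{(a+r)}$ feeding Gaussian elimination, and the appeal to Theorem~\ref{nondegeneracy} plus indecomposability of $\F_t 1_\la$ to identify the surviving differential as the crossing --- all of these are the right ingredients. You correctly identify that (\ref{tsinv}) does not follow formally from (\ref{tsdef}) and requires its own mirrored computation with $\Theta_s^{-1}$. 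Two caveats worth flagging: first, $\Theta_s 1_\la$ and $\Theta_s^{-1} 1_\la$ are unbounded in one homological direction, so the intermediate equivalence $\Theta_s\F_t \simeq C\Theta_s$ lives in $K^-$ (or $K^+$) rather than $K^b$ as you wrote, and one must keep track that the boundedness of the final answer comes from cancellation after composing with $\Theta_s^{-1}$. Second, as you yourself note, the entire weight of the argument sits in the claim that the explicit chain map exists and that the Gaussian elimination goes through coherently across all homological degrees; that step is not carried out here, and it is exactly the content of \cite{allr}. So what you have is a correct outline of the shape of the proof, not a proof.
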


We note that this is proved directly in \cite{allr}, but computations to prove this also already appear in the literature in \cite[Corollary 5.4]{ck}.

\begin{remark}
At the level of the Grothendieck group, $\T_s$ decategorifies the braid group automorphism $T_s$ constructed in \cite[\S 38]{bookoflusztig}.
\end{remark}


\section{Quiver Hecke Algebras}

The quiver Hecke algebra $R$ can be defined inside $\U$ as the algebra generated by the upward strands, subject to the isotopy and quiver Hecke relations. There are no identities in $R$ that do not follow from the relations only involving upward strands.

For $\nu\in\N I$, let $R(\nu)$ be the subalgebra of $R$ consisting of diagrams where the colours of the strands add to $\nu$. Here we refer to a strand labelled by $s\in S$ as having colour $s$.

The algebra $R(\nu)$ is {\emph Laurentian}, this means that for all $n$, $R(\nu)_n$ is finite dimensional and for $n\ll 0$, we have $R(\nu)_n=0$.

By placing strands next to each other there is a nonunital inclusion of algebras $R(\la)\otimes R(\mu)\hookrightarrow R(\la+\mu)$.
There is thus a corresponding induction functor
\[
\Ind\map{R(\la)\mods \times R(\mu)\mods}{R(\la+\mu)\mods}
 \]
written $(M,N)\mapsto M\circ N$, given by
\[
 M\circ N:=R(\la+\mu)e\otimes_{R(\la)\otimes R(\mu)}(M\otimes N)
\]
where $e$ is the image of the unit under the algebra inclusion.

Its left adjoint is the restriction functor
\[
\Res_{\la\mu}\map{R(\la+\mu)\mods}{R(\la)\otimes R(\mu)\mods}
\]
given by
\[
\Res_{\la\mu}(M)=eM.
\]

Let $\la,\mu\in \N I$. Then there is another adjunction \cite[Corollary 2.6]{klr1}:
\begin{equation}\label{otheradjunction}
 \Ext^i(A,B\circ C)\cong q^{\lambda\cdot\mu}\Ext^i(\Res_{\mu\la} A, C\boxtimes B),
\end{equation}
where $B$ is an $R(\la)$-module, $C$ is an $R(\mu)$-module and $A$ is an $R(\la)$-module.

The main theorem of \cite{khovanovlauda} is that the quiver Hecke algebras categorify $\f$ (introduced in \S \ref{prelim}), which means there is an isomorphism
\begin{equation} \label{klmain}
\bigoplus_\nu K_0(R(\nu)\prmod)\cong \f.
\end{equation}
Here $R(\nu)\prmod$ is the category of finitely generated graded projective $R(\nu)$-modules. Further properties of this isomorphism including compatibilities with induction and restriction functors are discussed and proven in \cite{khovanovlauda}.

Let $\f^\ast$ be the graded $\Z[q,q\inv]$-dual of $\f$. As a consequence of (\ref{klmain}), it has the following categorical interpretation:
\[
\bigoplus_\nu K_0(R(\nu)\fmod)\cong \f^\ast.
\]
Here $R(\nu)\fmod$ denotes the category of finite dimensional graded $R(\nu)$-modules. Under these isomorphisms, the canonical pairing between $\f$ and $\f^\ast$ is incarnated by the Hom-pairing. We let $\th_s^\ast$ be the basis element of $\f^\ast$ dual to $\th_s$.

\section{The embeddings of categories}

For each $\la\in P$, there is a functor
\[
i_\la: K^-(R(\nu)\prmod) \rightarrow \Hom_{K^-(\udot)}(\la,\la-\nu).
\]

If $\ii=(i_1,i_2,\ldots,i_n)$ is a sequence of elements of $S$, then there is an idempotent $e_\ii\in R(i_1+i_2+\cdots+i_n)$ consisting of vertical strings with colours $i_1,\ldots,i_n$ in that order without any dots. We therefore get a projective $R(\nu)$-module $P_\ii=R(\nu)e_\ii=P_{i_1}\circ P_{i_2}\circ \cdots \circ P_{i_n}$. Under this functor, we have 
\[
i_\la(P_\ii)=\F_{i_1}\F_{i_2}\cdots \F_{i_n}{1}_\la.
\]


These functors satisfy a compatibility between induction and composition, namely that the following diagram commutes:
     \[
\begin{tikzcd}[swap]
    K^-(R(\mu)\prmod)\times K^-(R(\nu)\prmod) \arrow{r}[swap]{\Ind}{}
           \arrow{d}[swap]{i_{\la-\nu}\times i_\la}
  & K^-(R(\mu+\nu)\prmod) \arrow{d}[swap]{i_\la} \\  
    \Hom_{K^-(\udot)}(\la-\nu,\la-\mu-\nu)\times \Hom_{K^-(\udot)}(\la,\la-\nu) \arrow{r} 
  & \Hom_{K^-(\udot)}(\la,\la-\mu-\nu)
\end{tikzcd}
\]
where the horizontal map along the bottom row is the composition in $K^-(\udot)$.


Recall that for each $\la\in P$, we have a commutative algebra $\B_\la$ 
freely generated by clockwise bubbles with loops, introduced in \S \ref{sec:2cat}.
By the nondegeneracy theorem, $\B_\la\cong\End_\U(\id_\la)$.  
In degree zero, this algebra is spanned by the identity and in negative degrees, it is zero.

\begin{theorem}\label{embedding}
 Suppose $X$ and $Y$ are two objects in $K^b(R(\nu))\prmod)$. Then there is a canonical isomorphism of graded vector spaces.
 \[
   \Hom_{K^b(R(\nu)-\operatorname{pmod})}(X,Y)\otimes \B_\la\cong  \Hom_{K^-(\udot)}( i_\la(X),i_\la(Y))
 \]
\end{theorem}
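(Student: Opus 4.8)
The plan is to reduce the statement to the nondegeneracy theorem (Theorem~\ref{nondegeneracy}) via a careful analysis of the morphism complexes. First I would observe that it suffices to treat the case where $X = P_\ii$ and $Y = P_\jj$ are the projective modules attached to sequences $\ii,\jj$, placed in a single homological degree; the general case follows because every bounded complex of projectives is homotopy equivalent to one built from such $P_\ii$'s, and both sides of the claimed isomorphism are additive in $X$ and $Y$ and send homotopy equivalences to isomorphisms (the right-hand side because $i_\la$ is a functor into a homotopy category, the left-hand side tautologically). So the core computation is to identify $\HOM_{\U}(\F_\ii 1_\la, \F_\jj 1_\la)$, where $\F_\ii = \F_{i_1}\cdots\F_{i_n}$, with $\Hom_{R(\nu)}(P_\ii, P_\jj) \otimes \B_\la$ as graded vector spaces, compatibly enough with differentials that it passes to complexes.

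The key input is that, by the nondegeneracy theorem together with the adjunction remarks following it, $\HOM_\U(\F_\ii 1_\la, \F_\jj 1_\la)$ has an explicit basis: the elements $f_{D,\aa}\otimes\pi$ where $D$ runs over matchings of $\ii$ and $\jj$, $\aa$ over dot-placements, and $\pi$ over a basis $\Pi$ of $\B_\la$. On the other hand, the quiver Hecke algebra $R$ was \emph{defined} inside $\U$ as the subalgebra generated by upward strands subject only to the isotopy and quiver Hecke relations, with no extra identities; hence $\Hom_{R(\nu)}(P_\ii,P_\jj) = e_\jj R(\nu) e_\ii$ has as basis precisely the diagrams $f_{D,\aa}$ built from downward (equivalently, under reflection, upward) crossings and dots — this is the standard basis theorem for KLR algebras, which is exactly the $\B_\la$-free part of the nondegeneracy statement. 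Thus the map
\[
 \Hom_{R(\nu)}(P_\ii,P_\jj)\otimes\B_\la \longrightarrow \HOM_\U(\F_\ii 1_\la,\F_\jj 1_\la), \qquad f\otimes b \longmapsto f\cdot b,
\]
sending a KLR diagram tensored with a bubble polynomial to the same diagram with the bubbles adjoined near the right edge, carries a basis to a basis and is therefore a graded isomorphism. (One must check this map is well defined, i.e. that the bubbles, being endomorphisms of $\id_\la$, slide freely past the upward strands modulo lower-order terms that are already accounted for in the span; this is where the convention allowing negative numbers of dots and the commutativity of $\B_\la$ are used.)

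Finally I would upgrade this from the level of individual Hom-spaces to complexes. Both $i_\la(X)$ and $i_\la(Y)$ are complexes whose terms are direct sums of shifted $\F_\ii 1_\la$'s, and the differentials on the $\U$-side are, by construction of $i_\la$, the images under the diagrammatic calculus of the differentials of $X$ and $Y$ as complexes of $R(\nu)$-modules. Under the termwise isomorphism just constructed, tensoring everything with the fixed algebra $\B_\la$ (placed in homological degree $0$ with zero differential), the total-complex differential on $\Hom^\bullet(i_\la X, i_\la Y)$ matches $d_{\Hom^\bullet(X,Y)}\otimes\id_{\B_\la}$, because a bubble on the right edge is a two-sided "spectator" that commutes with every crossing and cap/cup used to write down the $R(\nu)$-differentials. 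Passing to cohomology in degree $0$ — and using that $\B_\la$ is concentrated in non-negative degrees with one-dimensional degree-zero part, so that $\otimes\B_\la$ is exact — yields the claimed canonical isomorphism
\[
 \Hom_{K^b(R(\nu)\prmod)}(X,Y)\otimes\B_\la \;\cong\; \Hom_{K^-(\udot)}(i_\la(X),i_\la(Y)).
\]
I expect the main obstacle to be bookkeeping: verifying that the chosen basis of $\HOM_\U$ can genuinely be organized in the tensor-product form $(\text{KLR part})\otimes(\text{bubble part})$ in a way that is simultaneously natural in the complexes and compatible with the differentials, rather than just an abstract dimension count. In particular one must be careful that adjoining bubbles on the right does not interfere with the naturality of $i_\la$ with respect to the induction/composition square recalled above, and that no sign or grading-shift discrepancies enter from the divided-power normalizations hidden inside the definition of the $\F_\ii$'s.
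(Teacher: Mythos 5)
Your proof is correct and rests on the same two pillars as the paper's: the nondegeneracy theorem to split $\HOM_\U(\F_{i_1}\cdots\F_{i_n}1_\la,\F_{j_1}\cdots\F_{j_n}1_\la)$ as $\Hom_{R(\nu)}(P_\ii,P_\jj)\otimes\B_\la$, and the observation that bubbles, being endomorphisms of $\id_\la$ living in the rightmost region, commute with the purely upward-strand morphisms that furnish the differentials of $i_\la(X)$ and $i_\la(Y)$. Your packaging differs: you exhibit a termwise isomorphism of morphism complexes $\Hom^\bullet(i_\la X,i_\la Y)\cong\Hom^\bullet(X,Y)\otimes\B_\la$ and then take $H^0$, invoking flatness of $\B_\la$ over $k$ so that cohomology commutes with $-\otimes\B_\la$; the paper instead fixes a basis $\{b_j\}$ of $\B_\la$ and verifies surjectivity and injectivity of the comparison map by hand, writing an arbitrary chain map (resp.\ nullhomotopy) as $\sum g_{\bullet j}\otimes b_j$ and using nondegeneracy to decouple the coefficients $g_{\bullet j}$. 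These are the same computation organized at slightly different levels of abstraction, with your morphism-complex version arguably a touch cleaner. One small quibble: your opening claim that ``it suffices to treat $X=P_\ii$, $Y=P_\jj$ in a single homological degree'' is not literally valid, since $\Hom$ in a homotopy category does not split over the terms of a complex; but you do not actually rely on such a reduction — the real work is the morphism-complex comparison in your third paragraph — so the argument as a whole is sound.
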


\begin{proof}
Use $X=(P_\bullet,d)$ and $Y=(Q_\bullet,d)$ to denote bounded chain complexes of projective modules representing $X$ and $Y$.
Let $\{b_j\}_{j\in J}$ be a basis of $\B_\la$. 

We first prove surjectivity. Let $f_\bullet\map{i_\la(P_\bullet)}{i_\la(Q_\bullet)}$ be a morphism of chain complexes. We can express each $f_i:i_\la(P_i)\to i_\la(Q_i)$ in the form $f_i=\sum_{j\in J} g_{ij}\otimes h_j$ where each $g_{ij}$ has no bubbles, i.e. comes from a morphism from $P_i$ to $Q_i$.

The condition that $f_\bullet$ is a chain map is expressed in the identity
\[
 \sum_{j\in J} d g_{ij}\otimes b_j = \sum_{j\in J} g_{i+1,j} d \otimes b_j.
\]

By the nondegeneracy theorem, Theorem \ref{nondegeneracy}, this implies that for each $j$, $d g_{ij}=g_{i+1,j} d$.
Thus for each $j\in J$, the collection $g_{\bullet,j}$ is a chain map in $K^b(R(\nu)\prmod)$.

Since there are only a finite number of $j$ for which $g_{\bullet,j}$ is non-zero, we conclude that the map from $\Hom_{K^b(R(\nu)-\operatorname{pmod}}(X,Y)\otimes\B_\la$ to $\Hom_{K^-(\udot)}(i_\la(X),i_\la(Y))$ is surjective.

We now show injectivity. Suppose we have an element in   $\Hom(X,Y)_{K^b(R(\nu)-\operatorname{pmod}}\otimes \B_\la$ that maps to zero. Write it in the form $\sum_{j\in J}f_{\bullet j}\otimes b_j$ with $f_{\bullet j}:X\to Y$ a chain map for all $j$. Since it gets sent to zero in $\Hom_{K^-(\udot)}( i_\la(X),i_\la(Y))$, there exists a chain homotopy $\sum_{j\in J}i_\la(h_{ij})\otimes b_j:i_\la(P_{i-1})\to i_\la(Q_i)$ such that 
\[
\sum_{j\in J} di_\la(h_{i+1,j})\otimes b_j + \sum_{j\in J} i_\la(h_{ij})d \otimes b_j = \sum_{j\in J} i_\la(f_{ij})\otimes b_j.
\]
By the non-degeneracy theorem, this implies that for each $j$, we have 
\[
dh_{i+1,j}+h_{ij}d = f_{ij}
\]
and hence $f_{\bullet j}$ is homotopic to the zero map. Therefore $\sum_{j\in J} f_{\bullet j}\otimes b_j$ is zero in the homotopy category, as required.
\end{proof}

\begin{corollary}
 The functor $i_\la$ is faithful.
\end{corollary}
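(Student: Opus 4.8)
The plan is to read faithfulness off from Theorem~\ref{embedding}. The structural fact I would use is that $\B_\la$ is a non-negatively graded $k$-algebra whose degree-zero part is exactly $k\cdot\id_\la$, so that $\B_\la\cong k\oplus\B_\la^{>0}$ as graded $k$-modules; consequently, for every graded vector space $V$ the unit map $V\to V\otimes\B_\la$, $v\mapsto v\otimes 1$, is a split injection.

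First I would treat a pair of bounded complexes $X,Y\in K^b(R(\nu)\prmod)$, abbreviating $V:=\Hom_{K^b(R(\nu)\prmod)}(X,Y)$. I claim that the map $V\to\Hom_{K^-(\udot)}(i_\la(X),i_\la(Y))$ induced by $i_\la$ coincides with the composite of the unit map $V\to V\otimes\B_\la$ with the isomorphism $V\otimes\B_\la\xrightarrow{\ \sim\ }\Hom_{K^-(\udot)}(i_\la(X),i_\la(Y))$ of Theorem~\ref{embedding}. This is just a matter of inspecting the proof of that theorem: the isomorphism there sends $\sum_j f_{\bullet j}\otimes b_j$ to $\sum_j i_\la(f_{ij})\otimes b_j$, so precomposing with $v\mapsto v\otimes 1$ gives back $f\mapsto i_\la(f)$. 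Since the unit map is injective and the Theorem~\ref{embedding} map is bijective, $i_\la$ is faithful on $K^b(R(\nu)\prmod)$.

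To obtain the statement for all of $K^-(R(\nu)\prmod)$ I would rerun the injectivity half of the proof of Theorem~\ref{embedding}. Represent $X,Y$ by bounded-above complexes of projectives $(P_\bullet,d)$ and $(Q_\bullet,d)$, and let $f_\bullet\colon P_\bullet\to Q_\bullet$ be a chain map with $i_\la(f)=0$, so $i_\la(f_\bullet)=dh+hd$ for some chain homotopy $h$. Each component $h_i$ lies in a morphism space in $\Hom_\U(\la,\la-\nu)$, which by the nondegeneracy theorem decomposes as $\Hom_{R(\nu)}(P_i,Q_{i+1})\otimes\B_\la$; write $h_i=\sum_j i_\la(h_{ij})\otimes b_j$ accordingly, with the $h_{ij}$ bubble-free. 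Since $f_\bullet$ is an honest chain map of $R(\nu)$-modules it contributes only to the coefficient of $b_j=1$, and since the differentials of $i_\la(P_\bullet),i_\la(Q_\bullet)$ are bubble-free, comparing coefficients of $1$ in $i_\la(f_\bullet)=dh+hd$ and invoking nondegeneracy yields $d h_{i+1,1}+h_{i,1}d=f_i$ for all $i$. Hence $f_\bullet$ is null-homotopic and $f=0$ in $K^-(R(\nu)\prmod)$.

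I do not expect a genuine obstacle: the entire content sits inside Theorem~\ref{embedding}. The one point that needs attention is the compatibility of the isomorphism of that theorem with the functor $i_\la$ on the $\otimes 1$ summand, and this is transparent from the explicit description of the isomorphism recorded in its proof. Indeed the whole corollary could instead be proved uniformly by the hands-on argument of the third paragraph, simply noting that a chain map of $R(\nu)$-modules is bubble-free.
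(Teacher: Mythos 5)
Your proposal is correct and is surely what the paper leaves to the reader: Theorem~\ref{embedding} identifies $\Hom_{K^-(\udot)}(i_\la(X),i_\la(Y))$ with $\Hom(X,Y)\otimes\B_\la$, the map induced by $i_\la$ is the unit $f\mapsto f\otimes 1$, and since $\B_\la$ is non-negatively graded with $\B_\la^0=k$ this unit is (split) injective. You also correctly notice a small wrinkle the paper glosses over: Theorem~\ref{embedding} is stated only for $X,Y\in K^b(R(\nu)\prmod)$, whereas $i_\la$ is defined on $K^-(R(\nu)\prmod)$, and your third paragraph closes that gap by rerunning only the injectivity half of the argument (extracting the coefficient of $1\in\B_\la$ from the homotopy), which does not need boundedness of the complexes. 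That is a clean way to handle it; for the applications in the paper one could alternatively remark that the modules actually fed into $i_\la$ all have finite projective dimension by Lemma~\ref{standardfpdim}, so the $K^b$ case would suffice in practice, but your version proves the corollary as literally stated.
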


\section{Standard modules}

We summarise the current state of the theory of standard modules for quiver Hecke algebras. This theory is currently known to exist in finite type in all characteristics and in symmetric affine type when $k$ is of characteristic zero. The references are \cite{bkm} in the former case and \cite{mcn3} in the latter.

Let $\Phi^+$ be the set of positive roots. A \emph{convex order} on $\Phi^+$ is a preorder $\prec$ such that
\begin{itemize}
 \item If $S$ and $T$ are two subsets of $\Phi^+$ such that $s\prec t$ for all $s\in S$ and $t\in T$ then
 \[
  \spann_{\R_{\geq 0}} S \cap \spann_{\R_{\geq 0}} T = \{0 \}, 
 \]

 \item If $s\preceq t$ and $t\preceq s$ then $s$ and $t$ are proportional.

\end{itemize}

Let $\a\in\Phi^+$. A representation $M$ of $R(\a)$ is said to be {\it semicuspidal} (with respect to the convex order $\prec$) if $\Res_{\b\ga}M\neq 0$ implies that $\b$ is a sum of roots less than $\a$ and $\ga$ is a sum of roots greater than $\a$.

Let $\a$ be an indivisible root. An indecomposable projective object in the category of semicuspidal $R(\a)$-modules is called a root module. The grading shift on these root modules is customarily normalised such that their heads are self-dual. For each indecomposable root $\a$, the number of root modules for $\a$ is equal to the dimension of the root space $\g_\a$. In particular, if $\a$ is a real root, there is a unique root module, which we call $\D(\a)$.

We consider the standard modules introduced in \cite{bkm} and \cite{mcn3}. These depend on the convex order $\prec$ and are built out of root modules. The root modules corresponding to real roots have already been introduced, these are the modules $\D(\a)$. For the indivisible imaginary root $\d$, we will call the modules denoted $\D(\w)$ in \cite{mcn3} root modules. These are the projective modules in the category of cuspidal $R(\d)$-modules. 

Standard modules are naturally indexed by root partitions. A root partition is a sequence $\la=(\a_1^{n_1},\cdots,\a_l^{n_l})$ where $\a_1\succ \cdots \succ \a_l$ are indivisible roots, each $n_i$ is a positive integer unless $\a_i=\delta$, in which case it is a collection of partitions, indexed naturally by a collection of chamber coweights adapted to $\prec $ (as defined in \cite[\S 12]{mcn3}). 
We also use the symbol $\prec$ to denote the lexicographical order on root partitions.

To each term $\a_i^{n_i}$ in a root partition, a standard module $\D(\a_i)^{(n_i)}$ is constructed. 
If $\a_i$ is real then $\D(\a_i)^{\circ n_i}$ is a direct sum of $n_i!$ copies of the module $\D(\a_i)^{(n_i)}$ with grading shifts.
If $\a_i$ is imaginary then $\D(\a_i)^{(n_i)}$ is a summand of an induction product of the imaginary root modules $\D(\w)$; see \cite[\S 19]{mcn3} for the details (where this module is denoted $\D(\underline{\la})$). The standard module is then defined to be the indecomposable module
\[
 \D(\la)=\D(\a_1)^{(n_1)}\circ\cdots\circ \D(\a_l)^{(n_l)}.
\]

In \cite{bkm} and \cite{mcn3} homological properties of these modules are developed which justify the use of the name standard. In this paper we call a module standard if it is isomorphic to any grading shift of any $\D(\la)$.

From their construction, the family of standard modules satisfies the following property:

\begin{proposition}\label{roottostandard}
 Every standard module is obtained from a root module by a process of induction and taking direct summands.
\end{proposition}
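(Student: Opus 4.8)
The plan is to unwind the definition
\[
\D(\la)=\D(\a_1)^{(n_1)}\circ\cdots\circ\D(\a_l)^{(n_l)}
\]
and reduce the claim to the two building blocks recalled just before the statement. First I would analyse a single factor $\D(\a_i)^{(n_i)}$. When $\a_i$ is a real root, $\D(\a_i)^{\circ n_i}$ is a direct sum of $n_i!$ grading-shifted copies of $\D(\a_i)^{(n_i)}$, so $\D(\a_i)^{(n_i)}$ is a direct summand of the $n_i$-fold induction product $\D(\a_i)\circ\cdots\circ\D(\a_i)$ of the root module $\D(\a_i)$. When $\a_i=\d$, the construction in \cite[\S 19]{mcn3} exhibits $\D(\a_i)^{(n_i)}=\D(\ula)$ as a direct summand of an induction product of the imaginary root modules $\D(\w)$. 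In either case each factor $\D(\a_i)^{(n_i)}$ is a direct summand of an induction product of root modules.

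The second step is to propagate this through the outer induction product. Since $\Ind$ is an additive bifunctor, if $A$ is a direct summand of $A'$ and $B$ is a direct summand of $B'$, then writing $A'\cong A\oplus A''$ and $B'\cong B\oplus B''$ gives
\[
A'\circ B'\cong (A\circ B)\oplus(A\circ B'')\oplus(A''\circ B)\oplus(A''\circ B''),
\]
so $A\circ B$ is a direct summand of $A'\circ B'$. Using this together with the associativity of $\circ$ and inducting on $l$, one concludes that $\D(\la)$ is a direct summand of an induction product of root modules, which is exactly the assertion.

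I do not expect a genuine obstacle here: the proposition is a bookkeeping consequence of the constructions of \cite{bkm} and \cite{mcn3}. The only points that need care are the additivity of $\Ind$ in each variable and remembering that in the imaginary case it is the modules $\D(\w)$, and not the composite modules $\D(\ula)$, that have been designated as root modules.
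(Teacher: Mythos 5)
The paper offers no explicit proof of this proposition; it simply asserts it follows "from their construction" and moves on. Your argument correctly fills in exactly the bookkeeping the paper leaves implicit: each factor $\D(\a_i)^{(n_i)}$ is a direct summand of an induction product of root modules (in the real case by the statement about $\D(\a_i)^{\circ n_i}$, in the imaginary case by definition of $\D(\ula)$), and additivity of $\circ$ in each variable propagates this to the full product $\D(\la)$. Your approach is essentially the same as what the paper intends, just spelled out.
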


\begin{remark}
 We expect that if a theory of standard modules is developed in affine type over a field of positive characteristic, it will not satisfy Proposition \ref{roottostandard}. This expectation is because of the essential use of the semisimplicity of the representations of the symmetric group in \cite{mcn3}. Geometric evidence against a good theory of standard modules in positive characteristic is discussed in \cite{mm}.
 However, extensions to non-symmetric affine types in characteristic zero, either classically or using quiver Hecke superalgebras \cite{kkt}, are likely to be possible.
\end{remark}

If $\a$ is a real root, let $L(\a)$ be the head of $\D(\a)$. If $\la$ is a multipartition, let $L(\la)$ be the head of $\D(\la)$. The modules $L(\la)$ are all simple and if $\la=\a^n$ then $L(\la)=L(\a)^{\circ n}$ (up to a grading shift which is not relevant to us).

Let $\la=(\a_1^{n_1},\cdots,\a_l^{n_l})$ be a root partition. Define the proper costandard module
\[
 \overline{\nabla}(\la)=L(\a_l^{n_l})\circ \cdots \circ L(\a_1^{n_1}).
\]

We can now state a standard classification result for simple modules of quiver Hecke algebras. Alternative references include \cite{kleshchev} and \cite{tingleywebster}.

\begin{theorem}\label{klrclass}
\cite[Theorem 8.8, Lemma 8.6]{mcn3}
 The simple modules for $R(\nu)$ are classified up to isomorphism and grading shift by root partitions. The simple module $L(\la)$ corresponding to the root partition $\la$ is the socle of $\overline{\nabla}(\lambda)$. Furthermore every other simple subquotient $L(\mu)$ of $\overline{\nabla}(\la)$ has $\la \prec \mu$ in the lexicographical order on root partitions.
\end{theorem}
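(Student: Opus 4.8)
The plan is to regard $\bigoplus_\nu R(\nu)\fmod$ as an affine highest weight category in which the $\D(\la)$ play the role of standard objects and the $\overline{\nabla}(\la)$ that of proper costandard objects, and to deduce the three assertions of the theorem from the orthogonality relation
\[
\dimq\HOM(\D(\mu),\overline{\nabla}(\la))=\delta_{\la\mu}
\]
together with the combinatorial structure of the restrictions of the $\overline{\nabla}(\la)$. Granting the orthogonality, the deduction is nearly formal. Recall from above that $\hd\D(\la)=L(\la)$ is simple, so there is a surjection $\D(\la)\twoheadrightarrow L(\la)$; hence any nonzero simple submodule $L(\mu)\hookrightarrow\overline{\nabla}(\la)$ produces a nonzero composite $\D(\mu)\twoheadrightarrow L(\mu)\hookrightarrow\overline{\nabla}(\la)$, forcing $\mu=\la$, and the one-dimensionality of $\HOM(\D(\la),\overline{\nabla}(\la))$ then gives $\soc\overline{\nabla}(\la)=L(\la)$. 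Non-isomorphism of the $L(\la)$ follows: if $L(\mu)\cong L(\la)$ then $L(\mu)\cong\soc\overline{\nabla}(\la)\hookrightarrow\overline{\nabla}(\la)$, and composing with $\D(\mu)\twoheadrightarrow L(\mu)$ and using orthogonality again gives $\mu=\la$. They exhaust the simple modules because, by \eqref{klmain}, the number of graded simples of $R(\nu)$ up to shift equals $\dim_{\mathbb{Q}(q)}\f^\ast_\nu$, which by classical PBW theory for $U_q^+(\g)$ is the number of root partitions of $\nu$. The final assertion is a separate computation: the restriction of $\overline{\nabla}(\la)$ along the block decomposition dictated by $\la$ contains $L(\a_l^{n_l})\boxtimes\cdots\boxtimes L(\a_1^{n_1})$ with multiplicity one, while the semicuspidality of the factors together with convexity of $\prec$ shows that every other composition factor $L(\mu)$ of $\overline{\nabla}(\la)$ has a restriction living in a lexicographically larger position, hence $\la\prec\mu$; this is run by induction on the height $|\nu|$.

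It therefore remains to prove the orthogonality. I would first dispose of the single-root cases. For a real root $\a$ the module $\D(\a)$ is the projective cover of $L(\a)=\hd\D(\a)$ in the semicuspidal category at $\a$, so $\overline{\nabla}(\a)=L(\a)$ and $\HOM(\D(\a),L(\a))$ is one-dimensional; the case $\a^n$ follows from the description recalled above of $\D(\a)^{\circ n}$ as $n!$ grading-shifted copies of $\D(\a)^{(n)}$ together with $L(\a^n)=L(\a)^{\circ n}$. For the indivisible imaginary root $\d$ one invokes the analysis of the cuspidal category in \cite{mcn3}: the $\D(\w)$ are its indecomposable projectives, the appropriate $L$'s its simples, and the semisimplicity of $kS_n$ (valid since $\operatorname{char}k=0$) makes products of these behave as in the real case.

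For general root partitions $\la=(\a_1^{n_1},\dots,\a_l^{n_l})$ and $\mu=(\b_1^{m_1},\dots,\b_p^{m_p})$ I would compute $\HOM(\D(\mu),\overline{\nabla}(\la))$ by repeatedly applying the adjunction \eqref{otheradjunction} to peel the leftmost induction factor $\D(\b_1)^{(m_1)}$ off $\D(\mu)=\D(\b_1)^{(m_1)}\circ\cdots\circ\D(\b_p)^{(m_p)}$, thereby rewriting the problem in terms of the restriction of the monotone induction product $\overline{\nabla}(\la)=L(\a_l^{n_l})\circ\cdots\circ L(\a_1^{n_1})$. Each summand of such a restriction is an induction product of restrictions of the individual $L(\a_i^{n_i})$; semicuspidality of each factor and the convexity axioms for $\prec$ then force the surviving summands to respect the block structure, so the support constraints can be satisfied only when $\b_1=\a_1$ and $m_1=n_1$, after which one recurses on $l$. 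This reduces everything to the rank-one computations and produces the orthogonality (and, with the same bookkeeping, the $\prec$-triangularity of the composition factors used above).

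The main obstacle is exactly this Mackey/shuffle bookkeeping: verifying precisely that restricting a monotone induction product of semicuspidal modules and pairing it against another one is nonzero only when the root partitions agree term by term, and controlling the grading carefully enough that the surviving $\HOM$ is one-dimensional on the nose rather than merely up to shift. The imaginary part of the rank-one analysis is the other genuinely non-formal ingredient, and it is where the hypothesis $\operatorname{char}k=0$ enters.
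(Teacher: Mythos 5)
The paper itself offers no proof of this statement: it is imported verbatim from \cite{mcn3} (Theorem 8.8 and Lemma 8.6 there), with the finite-type analogue in \cite{bkm}. Your sketch can therefore only be compared against the argument in the cited source, and in content it is the same toolkit: shuffle/Mackey analysis of restrictions of the monotone product $\co(\la)$ constrained by semicuspidality and convexity, plus the rank count against root partitions via \eqref{klmain}. Where you genuinely differ is the logical organization. In \cite{mcn3} the classification is proved first (in its \S 8) by a direct socle argument on $\co(\la)$ using restriction and the Mackey filtration, with no reference to the standard modules $\D(\la)$ at all; those, and the $\Ext$-orthogonality quoted here as Proposition~\ref{prop:extorthog}, are built later (\S 24 of \cite{mcn3}) on top of the classification. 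You invert this order, taking $\dim_q\HOM(\D(\mu),\co(\la))=\delta_{\la\mu}$ as the master lemma and deducing the first two assertions from it; your deductions from orthogonality are correct, and the affine-highest-weight framing is conceptually cleaner. Two cautions are worth flagging. First, you use ``$\hd\D(\la)=L(\la)$ is simple'' and the indecomposability and count of root modules as givens; these are available independently of the classification, but that independence is exactly what the source establishes carefully before its \S 8, so a self-contained write-up along your lines would need to check there is no hidden circularity between defining the $\D(\la)$ and proving the orthogonality. Second, you correctly observe that the triangularity assertion is not a corollary of the orthogonality and still has to be carried out by the same Mackey bookkeeping on $\Res\,\co(\la)$ — so your reorganization buys a more modular statement, not less computation, which matches your own identification of the ``main obstacle.''
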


\begin{proposition}\label{prop:extorthog}\cite[Proposition 24.3]{mcn3}
 Let $\D$ be a standard module and $\co$ be a proper standard module. Then for $i>0$,
 \[
  \Ext^i(\D,\co)=0.
 \]
\end{proposition}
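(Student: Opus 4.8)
The plan is to prove the vanishing by induction on $\nu$ — say on the number of simple roots summing to $\nu$ — after first replacing the standard module $\D$ by an induction product of root modules. If $\D$ and $\co$ are modules over different $R(\nu)$ then all the $\Ext$-groups vanish for weight reasons, so suppose $\D=\D(\la)$ and $\co=\co(\mu)$ are both supported at $\nu$. By Proposition \ref{roottostandard} and the formula $\D(\la)=\D(\a_1)^{(n_1)}\circ\cdots\circ\D(\a_l)^{(n_l)}$ with $\a_1\succ\cdots\succ\a_l$, the module $\D(\la)$ is a direct summand of an induction product $P=\D(\ga_1)\circ\cdots\circ\D(\ga_r)$ of root modules whose labels, listed with multiplicity, are weakly decreasing; in particular $\ga_1\succeq\ga_j$ for all $j$ (the $\d$-parts of $\la$ contributing imaginary root modules $\D(\w)$). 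Since $\Ext^i(-,\co(\mu))$ carries direct summands to direct summands, it suffices to show $\Ext^i(P,\co(\mu))=0$ for all $i>0$, all such $P$, and all proper costandard $\co(\mu)$ of weight $\nu$; this is the statement on which I would induct.

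For the inductive step I would peel off the leftmost factor: write $P=\D(\ga_1)\circ P'$ with $P'=\D(\ga_2)\circ\cdots\circ\D(\ga_r)$, again a decreasing product of root modules. Since $\Ind$ is exact and preserves projectives, the induction--restriction adjunction derives to
\[
\Ext^i_{R(\nu)}\big(\D(\ga_1)\circ P',\,\co(\mu)\big)\ \cong\ \Ext^i_{R(\ga_1)\otimes R(\nu-\ga_1)}\big(\D(\ga_1)\boxtimes P',\,\Res_{\ga_1,\nu-\ga_1}\co(\mu)\big).
\]
I would then invoke the Mackey-type restriction filtration of proper costandard modules from \cite{bkm} and \cite{mcn3}: $\Res_{\ga_1,\nu-\ga_1}\co(\mu)$ has a finite filtration with sections $\co(\xi^{(1)})\boxtimes\co(\xi^{(2)})$, where $\xi^{(1)},\xi^{(2)}$ are root partitions of weights $\ga_1$ and $\nu-\ga_1$ — here convexity of $\prec$ together with the cuspidality of the factors $L(\a_i^{n_i})$ forces each indivisible constituent of $\mu$ to land entirely in one tensor slot, which is what makes the sections of exactly this shape. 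Filtering and applying the Künneth formula for $\Ext$ over the field $k$, $\Ext^i(P,\co(\mu))$ is assembled from summands
\[
\Ext^a_{R(\ga_1)}\big(\D(\ga_1),\co(\xi^{(1)})\big)\otimes\Ext^b_{R(\nu-\ga_1)}\big(P',\co(\xi^{(2)})\big),\qquad a+b=i.
\]
When $r\geq 2$ both $\ga_1$ and $\nu-\ga_1$ involve strictly fewer simple roots than $\nu$; since $\D(\ga_1)$ is a one-term and $P'$ an $(r-1)$-term decreasing product of root modules, the inductive hypothesis kills every summand with $a>0$ and every summand with $b>0$, so the whole sum vanishes whenever $i>0$.

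It remains to treat $r=1$, that is, to prove $\Ext^i(\D(\ga),\co(\mu))=0$ for $i>0$ when $\D(\ga)$ is a root module and $\mu$ is a root partition of the indivisible root $\ga$. I would first dispose of every $\mu$ with $\co(\mu)$ not simple: by convexity such a $\mu$ has a smallest part $\eta\prec\ga$, so writing $\co(\mu)=L(\eta^{a})\circ\co(\mu')$ and applying the twisted adjunction \eqref{otheradjunction} converts $\Ext^i(\D(\ga),\co(\mu))$ into an $\Ext$-group out of $\Res_{\ga-a\eta,\,a\eta}\D(\ga)$, and this restriction is zero because $\D(\ga)$ is semicuspidal of weight $\ga$ while $\eta\prec\ga$ cannot occur on the right of a proper restriction of $\D(\ga)$; so these $\Ext$-groups vanish in all degrees. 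This leaves only $\co(\mu)=L(\ga)$ for $\ga$ real, and $\co(\mu)=L(\w)$ at $\ga=\d$, where what is needed is the Ext-orthogonality $\Ext^{>0}(\D(\ga),L(\ga))=0$ of a root module with its own cuspidal head, respectively $\Ext^{>0}(\D(\w'),L(\w))=0$ between imaginary root modules at $\d$. For $\ga=\a_s$ a simple root this is clear, since $\D(\a_s)\cong R(\a_s)=k[x_1]$ is free; for non-simple real roots and at $\d$ it is a genuine structural fact about $R(\ga)$, which I would quote from \cite{bkm} and \cite{mcn3}. I expect exactly this base case to be the main obstacle: the reductions above are essentially formal manipulations with the two induction/restriction adjunctions, the Mackey filtration, and convexity, whereas the orthogonality of a root module with the cuspidal simple of the same weight is the hard input, resting ultimately on freeness at the simple roots propagated along the convex order in the real case, and on the semisimplicity of the symmetric group representations used in \cite{mcn3} at $\d$. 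Once these base cases are in hand the induction closes, and restricting the conclusion from $P$ to its direct summand $\D(\la)$ completes the proof.
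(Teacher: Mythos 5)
The paper does not supply a proof here: the statement is imported directly from \cite[Proposition 24.3]{mcn3}, so there is no in-paper argument to compare against. Taken on its own, your reconstruction is a plausible sketch of how such a proof goes: reduce from standard modules to induction products of root modules via Proposition~\ref{roottostandard}, peel off a factor through the induction--restriction adjunction, induct on the height of $\nu$, and isolate the genuinely hard base case. Your treatment of the $r=1$ case is sound --- for $\co(\mu)$ non-simple, applying the twisted adjunction \eqref{otheradjunction} together with semicuspidality of $\D(\ga)$ and convexity (which forces the smallest part $\eta\prec\ga$ and rules out $a\eta$ occurring on the right of a nonzero restriction of $\D(\ga)$) kills the Ext groups in all degrees; and you correctly flag $\Ext^{>0}(\D(\ga),L(\ga))=0$ as the irreducible structural input that must be fed in from \cite{bkm,mcn3}.

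The soft spot is the Mackey step. You assert that $\Res_{\ga_1,\nu-\ga_1}\co(\mu)$ has a finite filtration with sections of the exact shape $\co(\xi^{(1)})\boxtimes\co(\xi^{(2)})$, attributing this to \cite{bkm,mcn3} without pinning it down, and it is doing all of the work in the inductive step. For the real parts of $\mu$ this does follow from Mackey plus cuspidality, since each $L(\a_i)^{\circ n_i}$ can only split across the tensor factors as $L(\a_i)^{\circ a}\boxtimes L(\a_i)^{\circ(n_i-a)}$ up to shifts, and the resulting left and right strings assemble into proper costandards because the $\prec$-order of factors is preserved by the shuffle. But for the imaginary cuspidal factor $L(\d^n,\ula)$ the nonzero restrictions $\Res_{\b,\ga'}L(\d^n,\ula)$ can involve $\b$ a sum of real roots $\prec\d$ and $\ga'$ a sum of real roots $\succ\d$, not just multiples of $\d$, and it is not immediate that the induced sections still have the $\co\boxtimes\co$ form rather than something only possessing a proper costandard filtration after further work. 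Until that structural lemma is stated precisely with a reference, the inductive step has a gap at exactly the point where the affine case is delicate; the remaining scaffolding (both adjunctions, K\"unneth over $k$, and the convexity arguments) is formal and correct.
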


If $M$ is a $R(\nu)$-module, then we define $\wt(M)=\nu$.

\begin{lemma}\label{ses}
 Let $\D$ be a root module for a root that is not simple. Then there are root modules $\D_\b$ and $\D_\ga$ and a nonzero $q$-integer $m$ such that there is a short exact sequence
\[
  0\to q^{-\b\cdot\ga} \D_\b \circ \D_\ga \xrightarrow{f_{\b\ga}} \D_\ga\circ\D_\b\to \D^{\oplus m}\to 0.
 \]
 If $\wt(\D)$ is not minimal in $\Phi^+\setminus \{\a_s\}$, then $\D_\b$ and $\D_\ga$ can be chosen to be in $\Phi^+\setminus \{\a_s\}$.
 Furthermore $f_{\b\ga}$ spans $\Hom(q^{-\b\cdot\ga} \D_\b \circ \D_\ga,\D_\ga\circ\D_\b)$ and $\HOM(q^{-\b\cdot\ga} \D_\b \circ \D_\ga, \D_\ga\circ\D_\b)$ is concentrated in nonnegative degrees.
\end{lemma}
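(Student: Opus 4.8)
The plan is to deduce the short exact sequence from the structure theory for root modules developed in \cite{mcn3} (and \cite{bkm} in finite type), together with the categorical action and, when needed, the braid group autoequivalence $\T_s$. First I would recall that a root module for a real root $\a\neq\a_s$ that is not simple means $\a$ has height $\geq 2$, so there is a simple reflection $\a_s$ with $s_s\a$ a positive root of smaller height; dually, working with the convex order one finds indivisible roots $\b,\ga$ with $\a=\b+\ga$, $\b\succ\a\succ\ga$ (or the reverse), and such that $\b,\ga$ are ``adjacent'' to $\a$ in the sense that no root strictly between $\b$ and $\ga$ other than $\a$ appears. The point is that in this situation the induction product $\D_\ga\circ\D_\b$ has a two-step filtration whose subquotients are governed by the semicuspidal category at $\a$; the $\Ext$-orthogonality of Proposition \ref{prop:extorthog} and the classification Theorem \ref{klrclass} force the top of $\D_\ga\circ\D_\b$ to be a sum of copies of $\D(\a)$, and the kernel to be $q^{-\b\cdot\ga}\D_\b\circ\D_\ga$.

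The key steps, in order: (1) choose $\b,\ga$ as above using convexity of the order, arranging $\b,\ga\in\Phi^+\setminus\{\a_s\}$ whenever $\wt(\D)$ is not the minimal element of $\Phi^+\setminus\{\a_s\}$ — this is possible because if $\a$ is not minimal there is room to split off a root other than $\a_s$ on each side. (2) Identify $\HOM(q^{-\b\cdot\ga}\D_\b\circ\D_\ga,\D_\ga\circ\D_\b)$ using the adjunction \eqref{otheradjunction}: $\HOM(q^{-\b\cdot\ga}\D_\b\circ\D_\ga,\D_\ga\circ\D_\b)\cong\HOM(\Res_{\ga\b}(q^{-\b\cdot\ga}\D_\b\circ\D_\ga), \D_\b\boxtimes\D_\ga)$ up to the indicated $q$-shift, and then compute $\Res_{\ga\b}$ of the induction product by the Mackey/shuffle filtration; semicuspidality of $\D_\b,\D_\ga$ kills all but one term, yielding a one-dimensional degree-zero $\Hom$ space with higher degrees nonnegative. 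This simultaneously produces the map $f_{\b\ga}$, shows it spans $\Hom$, and shows $\HOM$ is concentrated in nonnegative degrees. (3) Show $f_{\b\ga}$ is injective: since $\D_\b\circ\D_\ga$ is a projective (hence torsion-free over the relevant polynomial subalgebra) module with a simple-ish head, a nonzero map out of it is injective provided its image is not a proper quotient killed by a central variable — here one uses that both sides have the same ``generic rank'' (equal graded dimension after inverting the polynomial generators), which follows from the decategorification: in $\f$ the element $\th_\b\th_\ga$ and $\th_\ga\th_\b$ differ by $q^{\b\cdot\ga}$ up to lower terms, matching the class of $\D^{\oplus m}$. (4) Compute the cokernel: it is supported at weight $\a+$(nothing extra), semicuspidal at $\a$, and its head is a quotient of the head of $\D_\ga\circ\D_\b$, which by the classification is $L(\a)$; projectivity of $\D_\ga\circ\D_\b$ in the semicuspidal category and the $\Ext^1$-vanishing from Proposition \ref{prop:extorthog} then force the cokernel to be $\D(\a)^{\oplus m}$ for some multiplicity $m$, and the $q$-integer value of $m$ is read off from the Grothendieck group identity $[\D_\ga\circ\D_\b]-q^{-\b\cdot\ga}[\D_\b\circ\D_\ga]=m[\D(\a)]$ in $\f$.

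The main obstacle I anticipate is step (4): pinning down that the cokernel is a \emph{direct sum} of copies of a single root module $\D(\a)$, rather than merely having all composition factors equal to $L(\a)$. This is where one genuinely needs the homological input — that $\D_\ga\circ\D_\b$ is (relatively) projective in the semicuspidal category at $\a$ and that $\Ext^{>0}(\D(\a),\co(\a))=0$ (Proposition \ref{prop:extorthog} applied with $\D=\D(\a)$ and the proper standard $\co=\co(\a)$) — to rule out self-extensions of $\D(\a)$ inside the cokernel and to conclude semisimplicity-up-to-grading. A secondary subtlety is verifying that the multiplicity $m$ is a nonzero \emph{$q$-integer}: this should come from the fact that, after passing to divided powers, the relevant structure constant in $\f$ is of the form $[k]$ for some $k\geq 1$, which is exactly the shape of the rank-two computations underlying Theorem \ref{aller}; alternatively one can invoke that $\D(\a)$ appears with the same multiplicity as in Lusztig's PBW-type factorisation, which is $1$ in the real-root case after the grading-shift normalisation, so in fact $m$ may often be taken to be $1$ but the statement allows the harmless generality of a $q$-integer to accommodate the imaginary contributions that are not needed here.
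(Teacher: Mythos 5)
The paper's own proof is essentially a pointer into the literature: it cites \cite[Theorem 4.10]{bkm} in finite type and \cite[Lemma 16.1, Theorem 17.1]{mcn3} in symmetric affine type (the two latter references covering the real and imaginary cases respectively), and notes that the one-dimensionality of $\Hom(q^{-\b\cdot\ga}\D_\b\circ\D_\ga,\D_\ga\circ\D_\b)$ is implicit in the proofs there. Your proposal, by contrast, tries to reconstruct the argument from scratch, and the real-root portion of your sketch is broadly in the spirit of what the cited results actually do — minimal pairs from convexity, Mackey/shuffle filtration to compute restriction and hence $\HOM$, Ext-orthogonality and relative projectivity in the semicuspidal category to pin down the cokernel.

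There is, however, a genuine gap: the imaginary case. The lemma allows $\D$ to be any root module for a non-simple root, including the modules $\D(\w)$ in affine type whose weight is the indivisible imaginary root $\d$, and the short exact sequence for those is \cite[Theorem 17.1]{mcn3}, which has a rather different shape from the minimal-pair mechanism you describe. Your sketch opens by assuming $\wt(\D)$ is a real root of height $\geq 2$, and your closing sentence explicitly dismisses ``the imaginary contributions'' as ``not needed here.'' That is not correct: the lemma is invoked in the inductive step of Proposition \ref{tofstandard}, and Proposition \ref{roottostandard} makes clear that standard modules are built by inducing root modules including the imaginary ones, so the imaginary case is load-bearing. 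You would need to either rerun the argument with the semicuspidal category at $\d$ (where the subtleties of chamber coweights and partitions enter) or fall back to citing \cite[Theorem 17.1]{mcn3} as the paper does.

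Two smaller points. First, your step (3) on injectivity of $f_{\b\ga}$ is underdeveloped: equality of ``generic ranks'' plus nonvanishing of the map does not by itself force injectivity — one also needs torsion-freeness and that the localized map is surjective, or (as in \cite{bkm}) a more careful analysis of the Mackey filtration; this should be tightened before it counts as a proof. Second, your parenthetical about $m$ usually being $1$ for real roots ``after the grading-shift normalisation'' is not something you've established, and the statement genuinely requires a $q$-integer $m$ in general; it is safer to read $m$ off the Grothendieck group identity as you otherwise suggest than to assert it is $1$.
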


\begin{proof}
In finite type, this is \cite[Theorem 4.10]{bkm}. In symmetric affine type,
this is \cite[Lemma 16.1]{mcn3} if $\wt(\D)$ is real and \cite[Theorem 17.1]{mcn3} if $\wt(\D)$ is imaginary.
 The statement about $\Hom(q^{-\b\cdot\ga} \D_\b \circ \D_\ga,\D_\ga\circ\D_\b)$ being one-dimensional is not explicitly mentioned in these references, but is clear from the proofs.
\end{proof}

\begin{definition}
 A module $M$ is said to have a $\D$-flag if there exists a filtration by submodules
 \[
  M=M_n\supset M_{n-1}\supset\cdots\supset M_1\supset M_0 =0
 \]
such that each subquotient $M_{i+1}/M_i$ is a standard module.
\end{definition}

\begin{lemma}\label{flag}\cite[Theorem 3.13]{bkm}
 A finitely generated module $M$ has a $\D$-flag if and only if $\Ext^1(M,\co)=0$ for all proper costandard modules $\co$.
\end{lemma}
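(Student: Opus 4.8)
The plan is to prove the two implications separately.

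The forward implication I would handle by induction on the length of a $\D$-flag. If the length is zero then $M=0$. Otherwise, passing to a subflag, there is a short exact sequence $0\to M'\to M\to \D(\la)\to 0$ with $M'$ admitting a shorter $\D$-flag and $\D(\la)$ standard; applying $\Hom(-,\co)$ produces the exact sequence $\Ext^1(\D(\la),\co)\to\Ext^1(M,\co)\to\Ext^1(M',\co)$, in which the left term vanishes by Proposition~\ref{prop:extorthog} and the right term by induction, so $\Ext^1(M,\co)=0$.

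For the converse I would induct on the non-negative integer
\[
c(M):=\sum_{\mu}\dim_k\HOM_{R(\nu)}\bigl(M,\co(\mu)\bigr),\qquad \nu:=\wt(M),
\]
where $\mu$ runs over the (finitely many) root partitions of weight $\nu$. This is finite because $M$ is finitely generated and each proper costandard module, being an induction product of simple modules, is finite dimensional. If $c(M)=0$ then $M=0$: otherwise $M$ has a simple quotient $L(\la)$ by Theorem~\ref{klrclass}, and composing with the inclusion $L(\la)=\soc\co(\la)\hookrightarrow\co(\la)$ gives a nonzero element of $\HOM(M,\co(\la))$. Thus $M=0$ is the base case, carrying the empty $\D$-flag. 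For the inductive step the goal is to peel off a standard quotient: to exhibit a root partition $\la$ of weight $\nu$, an integer $a$, and a short exact sequence
\[
0\to N\to M\xrightarrow{\ \pi\ } q^a\D(\la)\to 0 .
\]
Granting this, the long exact sequence for $\Hom(-,\co(\mu))$, together with $\Ext^1(\D(\la),\co(\mu))=\Ext^2(\D(\la),\co(\mu))=0$ from Proposition~\ref{prop:extorthog}, shows at once that $\Ext^1(N,\co(\mu))=0$ for every $\mu$ and that $\HOM(N,\co(\mu))\cong\HOM(M,\co(\mu))/\HOM(q^a\D(\la),\co(\mu))$ as graded vector spaces. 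Hence $c(N)=c(M)-\sum_\mu\dim_k\HOM(\D(\la),\co(\mu))$, and this is strictly less than $c(M)$ because $\HOM(\D(\la),\co(\la))\neq0$, the composite $\D(\la)\twoheadrightarrow L(\la)\hookrightarrow\co(\la)$ being nonzero. Since $R(\nu)$ is Noetherian, $N$ remains finitely generated, so the inductive hypothesis gives $N$ a $\D$-flag, and therefore $M$ one as well.

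The step I expect to be the main obstacle is the construction of the surjection $\pi$ onto a standard module. The plan is to take $\la$ extremal for the convex order — $\prec$-minimal, equivalently maximal for the highest-weight order, which by Theorem~\ref{klrclass} is opposite to $\prec$ — among the root partitions $\mu$ of weight $\nu$ with $\HOM(M,\co(\mu))\neq0$; this set is nonempty by the simple-quotient argument above. For such a $\la$ one uses the non-vanishing of $\HOM(M,\co(\la))$ to manufacture a surjection $M\twoheadrightarrow q^a\D(\la)$: the obstruction to doing so is measured by an $\Ext^1$-group which the extremality of $\la$, the hypothesis $\Ext^1(M,\co(\mu))=0$, and the composition-factor triangularity of standard and proper costandard modules (Theorem~\ref{klrclass}) combine to kill. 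Making this precise in the graded Laurentian setting — where $M$ need not have finite length, so that composition multiplicities must be tracked with care — is exactly the content of \cite[Theorem~3.13]{bkm} (compare the analysis around \cite[\S 24]{mcn3}), whose argument I would follow.
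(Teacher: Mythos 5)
The paper offers no proof of this lemma: it is quoted as \cite[Theorem 3.13]{bkm} (with the affine case resting on the parallel development in \cite{mcn3}), so there is nothing in-paper to compare against. Judged on its own terms, your forward implication is complete and correct; induction on flag length together with Proposition~\ref{prop:extorthog} is exactly what is needed. For the converse, the induction quantity $c(M)$ is well-defined and finite (each $\co(\mu)$ is indeed finite dimensional since the induction product of finite-dimensional modules over quiver Hecke algebras is finite dimensional, and a finitely generated $M$ has finite-dimensional $\HOM$ into any finite-dimensional target), and your long-exact-sequence bookkeeping for $c(N)$ and $\Ext^1(N,-)$ is correct.

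The one genuine gap is precisely the one you flag: constructing the surjection $M\twoheadrightarrow q^a\D(\la)$. Asserting that "extremality of $\la$, the hypothesis $\Ext^1(M,\co(\mu))=0$, and triangularity combine to kill" the obstruction is not a proof. Concretely, lifting a surjection $M\twoheadrightarrow L(\la)$ across $\operatorname{rad}\D(\la)\hookrightarrow\D(\la)\twoheadrightarrow L(\la)$ meets an obstruction in $\Ext^1(M,\operatorname{rad}\D(\la))$, and $\operatorname{rad}\D(\la)$ is not a proper costandard module, so the hypothesis does not apply to it directly; one needs the structural results about $\D(\la)$ (its characterisation as a projective cover in a truncated subcategory, or a filtration argument on $\operatorname{rad}\D(\la)$) that constitute the real content of \cite[Theorem 3.13]{bkm}. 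Since the paper itself simply cites that theorem, deferring to it is reasonable, but you should be clear that the surjection is the heart of the matter rather than a corollary of the extremal choice. Also a small point: the existence of a simple quotient of a nonzero finitely generated $M$ is a Nakayama-type fact for Laurentian graded algebras, not something supplied by Theorem~\ref{klrclass}, which only classifies the simples.
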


\begin{lemma}\label{standardfpdim}
Every module which has a $\D$-flag has a finite projective resolution.
\end{lemma}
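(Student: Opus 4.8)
The plan is to prove that any module with a $\D$-flag has finite projective dimension by induction on the length of the flag, using the standard dévissage argument: the class of modules with finite projective resolutions is closed under extensions, so it suffices to show each standard module $\D(\la)$ has a finite projective resolution. First I would reduce to the case of root modules via Proposition \ref{roottostandard}: every standard module is built from root modules by induction and taking direct summands. Since $R(\la)\otimes R(\mu)$ is (up to the nonunital inclusion) a subalgebra over which $R(\la+\mu)e$ is projective on both sides, the induction functor $M\circ N$ sends a pair of finite projective resolutions to a finite projective resolution of $M\circ N$ (tensor the two resolutions and use that $P_\bullet\boxtimes Q_\bullet$ resolves $M\boxtimes N$, then apply the exact functor $R(\la+\mu)e\otimes_{R(\la)\otimes R(\mu)}-$). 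A direct summand of a module with finite projective dimension again has finite projective dimension, since projective dimension of a summand is bounded by that of the whole. So the whole statement reduces to: every root module has a finite projective resolution.

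Next I would prove that root modules have finite projective resolutions by induction on $\wt(\D)$ in the order $\prec$ (or on the height of $\wt(\D)$). The base case is a minimal root, where $\wt(\D) = \a_s$ is a simple root: then $R(\a_s)$ is just $k[x]$ (a polynomial ring in one variable), which has global dimension one, so $\D(\a_s)$ trivially has a finite — in fact length-one — projective resolution. For the inductive step, let $\D$ be a root module for a non-simple root. Then Lemma \ref{ses} supplies a short exact sequence
\[
 0\to q^{-\b\cdot\ga}\D_\b\circ\D_\ga \to \D_\ga\circ\D_\b \to \D^{\oplus m}\to 0
\]
with $\D_\b,\D_\ga$ root modules of strictly smaller weight (each in $\Phi^+$, and by the refinement in the lemma each still a genuine root so the induction applies). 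By the inductive hypothesis $\D_\b$ and $\D_\ga$ have finite projective resolutions, hence so do $\D_\b\circ\D_\ga$ and $\D_\ga\circ\D_\b$ by the induction-product argument above. The short exact sequence then shows $\D^{\oplus m}$, being the cokernel of a map between modules of finite projective dimension, has finite projective dimension; and since $m$ is a nonzero $q$-integer, $\D^{\oplus m}$ is a nonzero direct sum of grading shifts of $\D$, so $\D$ itself has a finite projective resolution. This completes the induction.

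Finally, assembling the pieces: a standard module $\D(\la) = \D(\a_1)^{(n_1)}\circ\cdots\circ\D(\a_l)^{(n_l)}$ is a direct summand of an iterated induction product of root modules (using Proposition \ref{roottostandard} together with the fact that $\D(\a_i)^{(n_i)}$ is a summand of a product of root modules), each of which has a finite projective resolution, so $\D(\la)$ does too; and then a module with a $\D$-flag has a finite projective resolution by the extension-closure argument. \textbf{The main obstacle} I anticipate is not any single hard step but making the induction-product claim fully rigorous: one must check that $\Ind$ preserves \emph{finiteness} of projective dimension, which uses that $R(\la+\mu)e$ is projective as a left $R(\la)\otimes R(\mu)$-module (so that $\Ind$ is exact and sends projectives to projectives) — this is part of the standard formalism but should be cited carefully. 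The other point requiring a little care is ensuring the weights of $\D_\b,\D_\ga$ genuinely decrease in a well-founded order and that they remain root modules for actual roots so that the induction hypothesis applies, which is exactly what the last sentence of Lemma \ref{ses} is engineered to provide.
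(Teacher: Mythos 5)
Your proof is correct and takes essentially the same route as the paper: reduce to standard modules by extension-closure, to root modules by Proposition~\ref{roottostandard}, and then to simple roots by Lemma~\ref{ses}. The one minor imprecision is the base case: the root module for a simple root is itself projective (it is $R(\alpha_s)\cong k[x]$ up to a shift), giving a length-zero resolution, rather than merely having finite projective dimension because $\mathrm{gl.dim}\,k[x]=1$.
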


\begin{proof}
An induction on the length of the flag shows it suffices to prove this for standard modules. By Proposition \ref{roottostandard}, it suffices to prove this for root modules. 
By Lemma \ref{ses}, we reduce to the case of a root module for a simple root, which is projective.
\end{proof}

The last lemma means that for any module $M$ with a $\D$-flag, its class $[M]\in K_0(R\prmod)$ is defined.


\section{Reflection Functors}\label{sec:reflection}

Let $s\in S$. Let $_se\in R$ be element of the quiver Hecke algebra which is the sum of all generating idempotents with
first strand coloured $s$. Let $e_s$ be the sum of all generating idempotents with last strand coloured $s$. Let $_s\C$ (respectively $\C_s$) be the full subcategory of $R$-modules on which $_se$ (respectively $e_s$) acts by zero.

Equivalently
\[
 _s\C=R/\langle _se\rangle\mods \quad \mbox{and}\quad \C_s= R/\langle e_s\rangle\mods.
\]
%

%
%
%
%
%

\begin{lemma}\label{standardext}
 Suppose $P$ is projective in $_s\C$ (respectively in $\C_s$). Then $\Ext_R^i(P,M)=0$ for all $R$-modules $M$ in $_s\C$ (respectively $\C_s$) and $i>0$.
\end{lemma}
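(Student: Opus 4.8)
The statement is that projectives in $_s\C$ (resp. $\C_s$) are acyclic for $\Ext_R^i(-,M)$ against modules $M$ in the same subcategory. The plan is to recognise $_s\C$ as the module category of a quotient algebra $\bar R = R/\langle {_se}\rangle$ and to compare $\Ext$-groups computed in $\bar R\mods$ with those computed in $R\mods$. The point is that $\Ext^i_{\bar R}(P,M)$ vanishes for $i>0$ whenever $P$ is projective as a $\bar R$-module, so everything reduces to showing that the two $\Ext$-theories agree on the relevant pairs of modules.

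First I would observe that the inclusion $_s\C \hookrightarrow R\mods$ has an exact left adjoint, namely $N \mapsto \bar R \otimes_R N = N/{_se}N$, since $\bar R$ is a quotient of $R$; dually the inclusion admits an exact right adjoint given by taking the largest submodule killed by $_se$, but the left adjoint is what we want here. For $P$ projective in $_s\C$, it is a direct summand of a free $\bar R$-module, hence of $\bar R\otimes_R F$ for $F$ a free $R$-module. The key step is then to take a projective resolution $P_\bullet \to P$ of $P$ by projective $R$-modules and to show it stays exact (equivalently, that $P$ has finite projective dimension over $R$, or at least that $\Ext^i_R(P,M)$ can be computed compatibly). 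Concretely, I would show $P$ has a $\D$-flag: the standard modules lying in $_s\C$ are exactly those indexed by root partitions not involving the root $\a_s$, by the cuspidality conditions defining $_s\C$, and a projective object of $_s\C$ should have a filtration by such standard modules — one proves this using Lemma \ref{flag}, checking $\Ext^1_R(P,\co) = 0$ for proper costandard $\co$. For $\co$ not in $_s\C$ this $\Ext$ vanishes because $\Hom_R(-,\co)$ restricted to $_s\C$-modules factors appropriately (any map from a $_s\C$-module to $\co$ factors through the part of $\co$ killed by $_se$), and for $\co$ in $_s\C$ it vanishes because $P$ is projective there and $\Ext^1$ in $_s\C$ computes $\Ext^1_R$ in low degree. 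Once $P$ has a $\D$-flag, Lemma \ref{standardfpdim} gives that $P$ has a finite projective resolution over $R$.

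With a finite $R$-projective resolution $P_\bullet$ of $P$ in hand, I would argue that $\Hom_R(P_\bullet, M) = \Hom_{\bar R}(\bar R\otimes_R P_\bullet, M)$ for $M \in {_s\C}$, since $M$ is a $\bar R$-module; and $\bar R \otimes_R P_\bullet$ is a complex of projective $\bar R$-modules quasi-isomorphic to $P$ in $\bar R\mods$ provided the base change stays exact — which it does because each $P_i$ is $R$-projective so $\operatorname{Tor}^R_{>0}(\bar R, P_i) = 0$, hence $\operatorname{Tor}^R_{>0}(\bar R, P) = 0$ as $P$ has finite projective dimension, forcing $\bar R \otimes_R P_\bullet$ to be acyclic in positive degrees. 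Therefore $\Ext^i_R(P, M) = H^i\Hom_{\bar R}(\bar R\otimes_R P_\bullet, M) = \Ext^i_{\bar R}(P, M) = 0$ for $i>0$ since $P$ is $\bar R$-projective. The case of $\C_s$ is identical, using $e_s$ and the quotient $R/\langle e_s\rangle$ in place of $_se$ and $\bar R$, together with the symmetric version of the standard-module theory (which exists by the duality $M \mapsto M^\circledast$ swapping $_s\C$ and $\C_s$, or directly).

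The main obstacle I anticipate is the step showing that a projective object $P$ of $_s\C$ has finite projective dimension over $R$ — equivalently, that $\operatorname{Tor}^R_{>0}(\bar R, P)$ vanishes. The subtlety is that projectivity in the Serre quotient $_s\C$ is weaker than projectivity in $R\mods$, so one genuinely needs the structure theory: identifying the projectives of $_s\C$ as having $\D$-flags by root partitions avoiding $\a_s$, verifying the vanishing $\Ext^1_R(P,\co)=0$ for all proper costandard $\co$ (including the delicate bookkeeping that maps out of $_s\C$-modules into costandards supported off $_s\C$ vanish), and then invoking Lemma \ref{standardfpdim}. Everything downstream of that is formal homological algebra about quotient rings and base change of projective resolutions.
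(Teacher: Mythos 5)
You correctly identify the first half of the argument — showing that a projective object $P$ of $_s\C$ has a $\D$-flag by verifying $\Ext^1_R(P,\co)=0$ for proper costandard $\co$ (split into the cases $\co\in{_s\C}$ and $\co\notin{_s\C}$) and then invoking Lemma \ref{flag} — and this is exactly how the paper begins. However, the second half has a genuine gap. Your plan is to transport the problem to the quotient algebra $\bar R=R/\langle{_se}\rangle$ via a change-of-rings: take a finite $R$-projective resolution $P_\bullet\to P$ (which exists by Lemma \ref{standardfpdim}), apply $\bar R\otimes_R-$, and conclude. For this to compute $\Ext^i_{\bar R}(P,M)$ you need $\bar R\otimes_R P_\bullet$ to remain a resolution, i.e.\ $\operatorname{Tor}^R_{>0}(\bar R,P)=0$. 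But your stated justification — that each $P_i$ is $R$-projective so $\operatorname{Tor}^R_{>0}(\bar R,P_i)=0$, ``hence $\operatorname{Tor}^R_{>0}(\bar R,P)=0$ as $P$ has finite projective dimension'' — is a non-sequitur. By definition $\operatorname{Tor}^R_i(\bar R,P)=H_i(\bar R\otimes_R P_\bullet)$; the vanishing of Tor for the individual projective terms is automatic and says nothing about the homology of the tensored complex, and finite projective dimension is irrelevant. What you have written is equivalent to assuming the conclusion (``the base change stays exact''). Nor is the Tor-vanishing obvious from the $\D$-flag: it is at least as hard as the $\Ext$-vanishing being proved, and I don't see how to get it without already knowing the lemma.

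The paper's route, once the $\D$-flag is in hand, is different and avoids change of rings altogether. It uses Proposition \ref{prop:extorthog} ($\Ext^i(\D,\co)=0$ for $i>0$ when $\D$ is standard and $\co$ proper costandard, hence the same for anything with a $\D$-flag), then reduces to $M$ simple and runs an induction on the partial preorder $\prec$: embed $M$ in a proper costandard $\co$, use that all simple subquotients of $\co/M$ are $\prec M$ (Theorem \ref{klrclass}), and chase the long exact sequence for $0\to M\to\co\to\co/M\to 0$ to kill $\Ext^i(P,M)$ for $i\geq 2$, with $i=1$ handled directly by projectivity of $P$ in $_s\C$ (this $i=1$ case is the same easy observation you make — extensions between modules killed by the idempotent $_se$ are themselves killed by $_se$). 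If you want to rescue your approach, you would need to prove the Tor-vanishing as a separate statement; otherwise, switching to the paper's $\Ext$-orthogonality-plus-induction argument is the cleaner path.
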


Note that the Ext group is computed in the category of $R$-modules rather than in $_s\C$ (or $\C_s$), so this result is not a tautology for $i>1$.

\begin{proof}
We consider the case where $P$ is projective in $_s\C$, the other case following similarly.
Choose a convex order $\prec$ which has $\a_s$ as its largest element (which always exists). All root partitions, standard and proper costandard modules appearing in this proof will be with respect to this convex order.
 First we will prove that $P$ has a $\D$-flag. By Lemma \ref{flag} it suffices to prove that $\Ext^1(P,\overline{\nabla})=0$ for all proper costandard modules $\overline{\nabla}$. 
 If $\la=(\a_s^{n_s},\a_2^{n_2},\a_3^{n_3},\ldots,\a_l^{n_l})$ is a root partition with $n_s=0$, then $\co(\la)\in {_s\C}$. So in this case $\Ext^1(P,\co(\la))=0$ as $P$ is projective in $_s\C$.
 If on the other hand $n_s\neq 0$, then $\co(\la)\cong X\circ L_s$ for some $X$. Then by (\ref{otheradjunction}),
 \[
  \Ext^1(P,\co(\la))\cong \Ext^1(\Res_{\a_s,\nu-\a_s}P,L_s\otimes X).
 \]
 Since $P\in {_s}\C$, this restriction is zero, as required in order to prove that $P$ has a $\D$-flag.
 
By Proposition \ref{prop:extorthog}, the fact that $P$ has a $\D$-flag implies that $\Ext^i(P,\co)=0$ for all proper costandard modules $\co$ and $i>0$.
 We now turn our attention to the statement that $\Ext_R^i(P,M)=0$ for all $R$-modules $M$ in $_s\C$.
Without loss of generality we may assume $M$ is simple. 

Then $M$ injects into a proper costandard module $\co$. Let $Q$ be the quotient $\co/M$. By Theorem \ref{klrclass}, every simple subquotient $L$ of $Q$ satisfies $L\prec M$ in the lexicographical order on root partitions.

By induction on the partial preorder $\prec$, we may assume $\Ext^i(P,L)=0$ for all $L\prec M$ and $i>0$. Hence $\Ext^i(P,Q)=0$ for $i>0$. Now apply
$\Hom(P,-)$ to the short exact sequence 
 \[
  0\to M\to \co \to Q\to 0.
 \]
 The resulting long exact sequence implies that $\Ext^i(P,M)=0$ for $i\geq 2$.
 
 This leaves only the $i=1$ case, but $\Ext^1(P,M)$ vanishes since $P$ is projective in $_s\C$ and $M$ is in $_s\C$.
\end{proof}
 %

For each $s\in S$, we have inclusions of full subcategories
\[
 _s\C,\C_s\subset R\mods \subset K^-(R\prmod).
\]
If our Cartan datum is of finite type then
$R$ has finite global dimension by \cite[Theorem 4.7]{klr1}. So in this case, the essential image lies in the bounded homotopy category $K^b(R\prmod)$.
In general, by Lemma \ref{standardfpdim}, the standard modules have finite projective dimension so lie in $K^b(R\prmod)$, while $R$ in general may have infinite global dimension.

If $P$ is a projective object in $\C_s$, then it is shown in the proof of Lemma \ref{standardext} above that $P$ has a $\D$-flag. By Lemma \ref{standardfpdim}, it therefore has a finite length projective resolution. Therefore we obtain a homomorphism of Grothendieck groups
\[
\chi_s:[\C_s]\to \f,
\]
where we use $[-]$ to denote the split Grothendieck group of projective objects, and recalling the isomorphism (\ref{klmain}). There is similarly $ {_s}\chi:[ {_s}\C]\to \f$.

\begin{theorem}
The homomorphisms $\chi_s$ and ${_s}\chi$ are injective with image $\ker(r_s)$ and $\ker({_s}r)$ respectively.
\end{theorem}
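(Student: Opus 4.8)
The plan is to identify both statements with a single computation in the Grothendieck group, using the decategorification of the braid action together with the structure of standard modules. First I would recall what $\chi_s$ does on classes. By the proof of Lemma~\ref{standardext}, every projective object $P$ of $\C_s$ has a $\D$-flag (with respect to a convex order having $\a_s$ maximal), hence a finite projective resolution, so $[P]\in\f$ is well-defined and is a $\Z[q,q\inv]$-linear combination of classes of standard modules $[\D(\la)]$ with $\la$ ranging over root partitions with $n_s=0$; conversely these $\D(\la)$ lie in $\C_s$ and, being standard, have finite projective dimension, so they determine classes spanning the image of $\chi_s$. Thus it suffices to show that the span of $\{[\D(\la)] : n_s(\la)=0\}$ inside $\f$ equals $\ker(r_s)$, and symmetrically for ${_s}r$. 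The map $r_s$ satisfies $r_s(\th_s)=1$, $r_s(\th_t)=0$ for $t\neq s$, and the twisted Leibniz rule $r_s(xy)=q^{\deg(y)\cdot s}r_s(x)y+xr_s(y)$; a standard fact (see \cite{bookoflusztig} or \cite{khovanovlauda}) is that $r_s$ is, up to normalisation, the map induced on $\f\cong\bigoplus K_0(R\prmod)$ by the restriction functor $\Res_{\nu-\a_s,\a_s}$ followed by projection onto the $\th_s$-component of the last tensor factor. Hence $[\D(\la)]\in\ker(r_s)$ precisely when $\Res_{\nu-\a_s,\a_s}\D(\la)$ has no summand of the form $(-)\boxtimes L_s$, which (by the reasoning already used in Lemma~\ref{standardext} via the adjunction \eqref{otheradjunction}) is exactly the condition $\D(\la)\in\C_s$.

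Next I would prove injectivity. Since $\f$ is free over $\Z[q,q\inv]$ with basis indexed by root partitions (the dual of the PBW/canonical-type basis, cf.\ Theorem~\ref{klrclass}), and the transition matrix between $\{[\D(\la)]\}$ and this basis is unitriangular with respect to the lexicographical order $\prec$, the classes $[\D(\la)]$ are themselves $\Z[q,q\inv]$-linearly independent. Restricting to those $\la$ with $n_s=0$, linear independence persists, so $\chi_s$ is injective on the span of the corresponding projectives; one checks this span is all of $[\C_s]$ because every projective in $\C_s$ decomposes, in the split Grothendieck group, into indecomposable projectives each of which covers a simple $L(\la)$ with $n_s=0$, and such an indecomposable projective has a $\D$-flag with top quotient $\D(\la)$ and lower quotients $\D(\mu)$ with $\mu\succ\la$, again all with $n_s=0$. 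Thus $[\C_s]\to\f$ is injective with image $\spann_{\Z[q,q\inv]}\{[\D(\la)]:n_s=0\}$.

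It then remains to identify that span with $\ker(r_s)$. The inclusion $\subseteq$ is the computation above: $r_s[\D(\la)]=0$ when $n_s=0$. For the reverse inclusion I would argue by a counting/rank argument: in each graded piece $\f_\nu$, the standard classes $[\D(\la)]$ with $\wt(\la)=\nu$ form a basis, and $r_s\colon\f_\nu\to\f_{\nu-\a_s}$ sends $[\D(\la)]$ with $n_s\neq 0$ to a nonzero multiple of $[\D(\la')]$ (where $\la'$ removes one $\a_s$), up to lower-order terms $[\D(\mu)]$ with $\mu\succ\la'$; this makes the restriction of $r_s$ to the span of $\{[\D(\la)]:n_s\neq 0\}$ unitriangular, hence injective, so by dimension count $\ker(r_s)$ is forced to be exactly the span of $\{[\D(\la)]:n_s=0\}$. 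The argument for ${_s}\chi$ and $\ker({_s}r)$ is identical after replacing $\Res_{\nu-\a_s,\a_s}$ by $\Res_{\a_s,\nu-\a_s}$ and using the convex order with $\a_s$ minimal; alternatively one can deduce it from the $\chi_s$ case by applying the duality $\dual$ which swaps $r_s$ with ${_s}r$ and $\C_s$ with ${_s}\C$.

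The main obstacle I anticipate is pinning down precisely the identification of $r_s$ with a restriction-type operation on $K_0$ in the correct normalisation (the $q^{\deg(y)\cdot s}$ twist versus the $q^{\lambda\cdot\mu}$ in \eqref{otheradjunction}), and ensuring the unitriangularity claim for $r_s$ on standard classes genuinely holds with respect to $\prec$ — this requires knowing how $\Res$ interacts with the induction products $\D(\a_1)^{(n_1)}\circ\cdots$, which follows from the cuspidality of the factors and the Mackey-type filtration on $\Res\circ\Ind$, but needs to be stated carefully. Everything else is bookkeeping with the triangularity already packaged in Theorem~\ref{klrclass} and Proposition~\ref{roottostandard}.
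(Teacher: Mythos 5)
Your approach is correct in outline but takes a genuinely different route from the paper. The paper handles injectivity and the inclusion $\ker(r_s)\subseteq\im(\chi_s)$ by a single pairing argument: it uses the non-degenerate Hom-pairing between $\f$ and $\f^*$ (realised by $\Ext$ between projectives and finite-dimensional modules), together with Lemma~\ref{standardext} to reduce the Euler-characteristic pairing $\langle[P],[L]\rangle$ to $\Hom$, and then Theorem~\ref{klrclass} to write any simple \emph{not} in $\C_s$ as $[L]=y\,\th_s^\ast$, so that pairing a candidate kernel element against such $L$ factors through $r_s$. This sidesteps entirely the questions of whether standard classes form a $\Z[q,q\inv]$-basis and whether $r_s$ acts unitriangularly on them. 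By contrast, you work directly with the standard classes as a basis and argue by triangularity: you show $[\C_s]$ is the span of $\{[\D(\la)]:n_s(\la)=0\}$, use unitriangularity of the standard-to-canonical transition matrix for injectivity, and deduce $\ker(r_s)=\im(\chi_s)$ from a rank count after checking that $r_s$ is injective (via leading terms) on the complementary span $\{[\D(\la)]:n_s(\la)\neq 0\}$. Your argument is sound — in fact the cuspidality of the factors $\D(\a_i)$ for $\a_i\prec\a_s$ makes the Mackey computation for $\Res_{\nu-\a_s,\a_s}\D(\la)$ collapse to a single term, so there are no genuine ``lower-order terms'' and the injectivity on the complementary span is even cleaner than you anticipate — but it demands more of the standard-module machinery (the basis property in both finite and affine type) than the paper's proof, which only needs the classification of simples, the pairing, and the restriction identity $[\Res_{\nu-\a_s,\a_s}M]=r_s([M])\otimes\th_s$. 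What your version buys in exchange is an explicit description of the image as the $\Z[q,q\inv]$-span of the standard classes with $n_s=0$, which the paper's duality argument does not make visible.
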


\begin{proof}
First we prove injectivity. For $i\in I$, let $L_i$ denote the corresponding simple and $P_i$ be its projective cover. Suppose that $\sum_{i\in I}a_i[P_i] \in \ker (\chi_s)$. Let $M$ be a finite dimensional $R$-module. We can apply $\dim\operatorname{RHom}(-,M)$ to obtain
\[
\sum_{i\in I} \sum_{j=0}^\infty (-1)^j \dim \Ext^j (P_j,M)=0.
\]
Fix $i\in I$ and set $M=L_i$. Then by Lemma \ref{standardext}, all terms vanish except the one involving $\Ext^0(P_i,L_i)$. Hence $a_i=0$ for all $i$. Therefore $\chi$ is injective.

We now show that $\im(\chi_s)\subset \ker(r_s)$.
If $M\in \C_s$, then $e_s M=0$, so $\Res_{\nu-s,s}(M)=0$. 
The results of \cite{khovanovlauda} imply that
\[
[\Res_{\nu-s,s} M] = r_s([M])\otimes \th_s,
\]
from which we obtain that $r_s([M])=0$, so $\im(\chi_s)\subset \ker(r_s)$.

It remains to show that $\ker(r_s)\subset \im(\chi_s)$. Suppose $\xi\in \ker(r_s)$. For each $i\in I$, let $a_i=\langle \xi,[L_i]\rangle$ and let $P=\sum_{i\in I} a_i[P_i]$. Then from the discussion above, using Lemma \ref{standardext}, 
\[
\langle \chi_s(P)-\xi,L_i\rangle =0
\]
for all $i\in I$.

Now let $L$ be a simple $R$-module that is not in $\C_s$. From the classification of simple $R$-modules, Theorem \ref{klrclass}, $[L]=y\th^\ast_s$ for some $y\in \f^\ast$. Therefore
\[
\langle \chi_s(P)-\xi,[L]\rangle = \langle \chi_s(P)-\xi,y\th_s^\ast\rangle=\langle r_s(\chi_s(P)-\xi),y\rangle.
\]
Since we've already shown that $\chi_s(P)\in \ker(r_s)$ and $\xi$ is chosen to be in $\ker(r_s)$, this pairing is zero. Since the Hom-pairing between $\f$ and $\f^\ast$ is non-degenerate, we have therefore shown that $\chi_s(P)-\xi=0$, proving that $\xi\in \im(\chi_s)=0$, as required.
\end{proof}

The above theorem establishes a sense in which ${_s}\C$ and $\C_s$ categorify $\ker({_s}r)$ and $\ker(r_s)$.

Our main result is the following theorem, which we prove at the end of this section.

\begin{theorem}\label{reflectionfunctor}
 Suppose our quiver Hecke algebra is simply laced, of finite or affine type. If we are in affine type, assume furthermore that the ground field is of characteristic zero. Then there is a monoidal equivalence of categories $_s\C\cong \C_s$ which decategorifies to Lusztig's braid group automorphism.
\end{theorem}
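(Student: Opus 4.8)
The plan is to build the equivalence $_s\C \cong \C_s$ by restricting the autoequivalence $\T_s$ of $K^b(\udot)$ to the subcategories coming from quiver Hecke modules, and then showing that the restricted functor lands in the abelian category $\C_s$ (rather than just the homotopy category). First I would fix a convex order $\prec$ with $\a_s$ as its largest element, so that standard and proper costandard modules are compatible with the $\C_s$-filtration story from Lemma~\ref{standardext}. The key structural input is Proposition~\ref{tofstandard} (compatibility of $\T_s$ with standard modules) together with Theorem~\ref{aller}: the latter tells us that on the generating object $\F_t 1_\la$ with $t$ adjacent to $s$, $\T_s$ is given by the two-term complex $q\F_t\F_s 1_{s\la}\to \F_s\F_t 1_{s\la}$, which is exactly the image under $i_{s\la}$ of a short exact sequence of root modules as in Lemma~\ref{ses}. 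So applying $\T_s$ to (the image under $i_\la$ of) a root module $\D(\b)$ for $\b\in\Phi^+\setminus\{\a_s\}$ produces, via Lemma~\ref{ses} and an induction on $\wt(\D)$, something concentrated in a single homological degree, namely $i_{s\la}$ of the root module $\D(s\b)$ (up to grading shift). Using Proposition~\ref{roottostandard} and the compatibility of $i_\la$ with induction, this propagates to all standard modules: $\T_s$ sends $i_\la(\D(\la))$ to $i_{s\la}$ of a standard module for the reflected convex order, concentrated in homological degree zero.

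Next I would leverage the embedding theorem, Theorem~\ref{embedding}, which says that $\Hom_{K^-(\udot)}(i_\la X, i_\la Y)\cong \Hom_{K^b(R\prmod)}(X,Y)\otimes\B_\la$; since $\B_\la$ is a polynomial ring concentrated in nonnegative degrees with one-dimensional degree-zero part, this lets me transport the triangulated autoequivalence $\T_s$ down to a fully faithful functor on the relevant subcategory of $K^b(R\prmod)$, provided its essential image is again in that subcategory. The crucial point is then to identify which objects of $K^b(R\prmod)$ correspond to $_s\C$ and $\C_s$: by the proof of Lemma~\ref{standardext}, projectives in $_s\C$ have $\D$-flags with subquotients $\D(\la)$ for root partitions $\la$ with $n_s=0$, hence finite projective resolutions by Lemma~\ref{standardfpdim}. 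I would show $\T_s$ carries the triangulated subcategory of $K^b(R\prmod)$ generated by such $\D(\la)$ (with $\a_s$ largest) isomorphically onto the one generated by the $\D(\mu)$ with $\a_s$ smallest (i.e. $n_s=0$ for the reflected order), using that $s$ acts on $\Phi^+\setminus\{\a_s\}$ as a bijection. At the decategorified level this is precisely the statement that $T_s$ restricts to an isomorphism $\ker({_s}r)\cong\ker(r_s)$, which we already know, so the functor is forced onto the right subcategory.

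To conclude that the equivalence descends to the \emph{abelian} categories, I would argue that $\T_s$ applied to an object of $_s\C$ (viewed in $K^b(R\prmod)$ via a projective resolution) has cohomology concentrated in degree zero. This is where the standard-module machinery does the real work: any $M\in{_s\C}$ embeds in a proper costandard $\co(\la)$ with $n_s=0$, and by Lemma~\ref{standardext} and Proposition~\ref{prop:extorthog} one controls all higher $\Ext$'s; combined with the degree-zero concentration of $\T_s$ on standards established above, a spectral sequence / long exact sequence induction on the $\prec$-order shows $H^i(\T_s M)=0$ for $i\neq 0$ and that $H^0(\T_s M)\in\C_s$. The inverse functor comes from $\T_s^{-1}$ (equivalently $\T_{s}$ with the roles of $_s\C$ and $\C_s$ swapped, using (\ref{tsinv})), so we get an equivalence of abelian categories. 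Monoidality follows because $i_\la$ intertwines induction of modules with horizontal composition in $\udot$, and $\T_s$, being conjugation by the invertible Rickard complex $\Th_s$, is monoidal up to coherent isomorphism; one checks the monoidal structure is preserved on the generating projectives $P_\ii$ and extends. Finally, the decategorification statement is immediate: on Grothendieck groups $\T_s$ induces $T_s$ by the Remark following Theorem~\ref{aller}, and under the identifications $[{_s\C}]\cong\ker({_s}r)$, $[\C_s]\cong\ker(r_s)$ this is Lusztig's braid automorphism.

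The main obstacle I anticipate is the degree-zero concentration step: showing that $\T_s M$ has no higher cohomology for a general (not necessarily standard-flagged) object $M\in{_s\C}$. Root modules and standards behave well under $\T_s$ essentially by Theorem~\ref{aller} and Lemma~\ref{ses}, but a simple module in $_s\C$ need not have a $\D$-flag, so one genuinely needs the inductive argument through proper costandard resolutions, carefully tracking that the "error terms" $L(\mu)$ with $\mu\succ\la$ stay inside $_s\C$ (which requires $\a_s$ largest, so that $\mu\succ\la$ with $n_s(\la)=0$ still forces $n_s(\mu)=0$) — this is exactly the bookkeeping that makes Lemma~\ref{standardext} work, and the same idea should push through here.
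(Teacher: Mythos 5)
Your plan diverges from the paper's proof at the crucial step, and the place where you anticipate difficulty is in fact a genuine gap. You aim to show directly that $\T_s$ applied to an \emph{arbitrary} object $M\in{_s\C}$ has cohomology concentrated in degree zero, inducting on the $\prec$-order through proper costandard resolutions. But Proposition~\ref{tofstandard} controls $\T_s$ only on \emph{standard} modules $\D(\la)$ (built by induction from root modules), not on proper costandard modules $\co(\la)=L(\a_l^{n_l})\circ\cdots\circ L(\a_1^{n_1})$, which are built from simples. Nothing in the paper tells you what $\T_s(i_\la(\co(\la)))$ is, so the inductive step --- comparing the triangle $\T_s(L)\to\T_s(\co)\to\T_s(Q)\to$ --- cannot even begin. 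Even if one somehow knew $\T_s(\co)$ and $\T_s(Q)$ were concentrated in degree zero, the triangle would only place $\T_s(L)$ in degrees $\{-1,0\}$, so further input would be needed. Moreover, an arbitrary $M\in{_s\C}$ need not have finite projective dimension (only modules with $\D$-flags are shown to, Lemma~\ref{standardfpdim}), so ``$M$ viewed in $K^b(R\prmod)$'' is not even well-posed in general.

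The paper sidesteps this entirely by proving the much weaker statement that $\T_s$ sends \emph{projectives} of ${_s\C}$ to \emph{projectives} of $\C_s$ (Lemma~\ref{projtoproj}), and then invoking Morita theory rather than trying to restrict $\T_s$ to a functor on the whole abelian category. The point is that projectives of $_s\C$ do have $\D$-flags (established inside the proof of Lemma~\ref{standardext}), so they are in the reach of Proposition~\ref{tofstandard} via Lemma~\ref{8.5}; projectivity of the image is then an Ext-vanishing argument against \emph{standards} (Lemma~\ref{extzero} together with Lemma~\ref{technicalext}), which is available, rather than an analysis of $\T_s$ on proper costandards, which is not. Once projectives match up, Theorem~\ref{embedding} transports endomorphism algebras, and Morita theory finishes. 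Your outline does correctly identify the key inputs (Theorem~\ref{aller}, Lemma~\ref{ses}, Proposition~\ref{tofstandard}, Theorem~\ref{embedding}) and your decategorification claim is fine, but without the Morita step you are stuck trying to prove a stronger statement than necessary.

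One smaller error: your parenthetical that $\mu\succ\la$ with $n_s(\la)=0$ ``still forces $n_s(\mu)=0$'' is false for the lexicographic order with $\a_s$ largest --- root partitions with $n_s(\mu)>0$ are precisely the ones that are $\succ$ every root partition with $n_s=0$. The subquotients $L(\mu)$ of $\co(\la)/L(\la)$ do lie in $_s\C$, but for a different reason: $\co(\la)\in{_s\C}$ when $n_s(\la)=0$, and $_s\C$ is a Serre subcategory. This is fixable, but the reasoning as written is wrong.
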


The restriction to simply laced is due to the generality of \cite{allr}. The further restrictions are needed since we rely on the theory of standard modules, developed in \cite{bkm} (building on \cite{klr1}) in finite type and in \cite{mcn3} in affine type over a field of characteristic zero.

In finite type ADE and in characteristic zero, Kato \cite{kato} has proved this equivalence geometrically. 
In finite type ADE and in all characteristics there is a geometric proof in \cite{geometry} using results of Maksimau \cite{even}.
The paper of Xiao and Zhao \cite{xiaozhao} provides an interpretation of $T_s$ in terms of perverse sheaves in all symmetric types, again in characteristic zero, which is subsequently generalised to all symmetrisable types in \cite{zhao}. Kato \cite{kato2} has shown how to use this to define $\T_s$ geometrically in characteristic zero in all symmetric types. A geometric approach to the monoidality of this functor, restricted again to characteristic zero, is in \cite{kato2,mc:monoidal}.
%
%


Choose a convex order ${_s}\!\!\prec$ which has $\a_s$ as its largest element (which always exists). From ${_s}\!\!\prec$ we can obtain
another convex order $\prec_s$, defined by
\begin{itemize}
 \item $\a_s\prec_s \b$  for all  $\b\in \Phi^+\setminus \{\a_s\}$
 \item $\b \prec_s\ga$ if $s\b\,{_s}\!\!\prec s\ga$ for all $\b,\ga\in\Phi^+\setminus \{\a_s\}$
\end{itemize}
These convex orders induce two families of standard modules in $R$-mod, we denote them by $_s\D(\la)$ and $\D_s(\la)$.

\begin{proposition}\label{tofstandard}
Let $\mu$ be a root partition whose $\a_s$ component is zero, and let $\la\in P$. Then
 \[
  \TT_s(\iota_\la ({_s}\D(\mu)))\cong \iota_{s\la}(\D_s(s\mu))
 \]
\end{proposition}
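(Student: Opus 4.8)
The plan is to prove this by induction on the height of $\wt(\mu)$, reducing everything to the case of root modules, where Theorem \ref{aller} applies directly. The key observation is that both sides of the claimed isomorphism are compatible with induction products: on the left, $\TT_s$ is a monoidal autoequivalence of $K^b(\udot)$ (it is conjugation by the invertible complex $\Th_s$), and the embeddings $\iota_\la$ intertwine induction with composition in $K^-(\udot)$ by the commuting square recalled in \S 6; on the right, Proposition \ref{roottostandard} tells us that $_s\D(\mu)$ is obtained from root modules by induction and taking summands, and the operation $\mu\mapsto s\mu$ on root partitions is compatible with this (since $\prec_s$ is defined so that $s$ carries $_s\!\!\prec$-convex order data to $\prec_s$-convex order data). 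So once we know the statement for each root module $_s\D(\b)$ with $\b\in\Phi^+\setminus\{\a_s\}$, induction products and the fact that $\TT_s$ commutes with taking direct summands in the Karoubian homotopy category will give the general case. The grading shift by $q^{-\b\cdot\ga}$ appearing in Lemma \ref{ses} matches the grading shift built into the induction product, so the bookkeeping is consistent.

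\textbf{The base of the induction: root modules.} For a simple root $\a_t$ with $t\neq s$, the root module is just the projective $P_t$, and $\iota_\la(P_t)=\F_t 1_\la$. The claim $\TT_s(\F_t 1_\la)\cong \F_t 1_{s\la}$ when $t$ is \emph{not} connected to $s$ is standard (the Rickard complexes for non-adjacent vertices commute up to the obvious thing), and when $t$ \emph{is} connected to $s$ it is precisely Theorem \ref{aller}, equation (\ref{tsdef}): $\TT_s(\F_t 1_\la)$ is the two-term complex $q\F_t\F_s 1_{s\la}\to \clubsuit\F_s\F_t 1_{s\la}$ with the downward crossing as differential. One must then recognise this complex as $\iota_{s\la}$ of the appropriate standard module: for a connected pair, the root $\b = \a_t$ is sent by $s$ to $s\a_t = \a_s+\a_t$, and the standard module $\D_s(\a_s+\a_t)$ (a root module for $\prec_s$) is by Lemma \ref{ses} presented as exactly such a two-term complex built from $P_t\circ P_s$ and $P_s\circ P_t$ with the map $f_{\b\ga}$ spanning the relevant one-dimensional Hom space. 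Since $\iota_{s\la}$ sends $P_t\circ P_s$ to $\F_t\F_s 1_{s\la}$ and the unique (up to scalar) degree-zero map $P_t\circ P_s\to P_s\circ P_t$ to the downward crossing, the complexes match on the nose. For a non-simple root $\b$, use Lemma \ref{ses} to write $_s\D(\b)$ inside a short exact sequence $0\to q^{-\gamma\cdot\delta}\D_\gamma\circ\D_\delta\to \D_\delta\circ\D_\gamma\to {_s}\D(\b)^{\oplus m}\to 0$ with $\gamma,\delta$ of strictly smaller height (and in $\Phi^+\setminus\{\a_s\}$, using the minimality clause); apply the already-proven inductive hypothesis to $\D_\gamma$ and $\D_\delta$, apply the monoidal triangulated functor $\TT_s$ to the distinguished triangle coming from this short exact sequence, and use that $s$ maps this short exact sequence for $_s\!\!\prec$ to the corresponding one for $\prec_s$ to identify $\TT_s(\iota_\la({_s}\D(\b)))$ as a summand of $\iota_{s\la}(\D_s(s\b))$, then match multiplicities.

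\textbf{Main obstacle.} The delicate point is not the formal induction but the identification in the base case: showing that the \emph{specific} two-term Rickard-complex output of Theorem \ref{aller} coincides, as an object of $K^b(\udot)$, with $\iota_{s\la}$ of the standard module for $\prec_s$, including getting the grading shifts $q^{-\b\cdot\ga}$, the homological degree ($\clubsuit$ marking degree zero), and the fact that the differential is genuinely the map $f_{\b\ga}$ spanning the Hom space rather than some other map. This requires carefully comparing the normalisation conventions for divided powers $\F_s^{(n)}$ entering $\Th_s$ against the normalisation of standard modules (heads self-dual), and invoking the last sentence of Lemma \ref{ses} that $\HOM(q^{-\b\cdot\ga}\D_\b\circ\D_\ga,\D_\ga\circ\D_\b)$ is concentrated in nonnegative degrees with the degree-zero part one-dimensional, so that there is no ambiguity in the differential up to a nonzero scalar (which can be absorbed into the isomorphism). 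A secondary bookkeeping obstacle is checking that $\TT_s$ genuinely descends to a functor on the relevant bounded homotopy categories for \emph{all} the modules in question — here Lemma \ref{standardfpdim} guarantees standard modules have finite projective dimension, so they live in $K^b(R\prmod)$ and $\TT_s$ is defined on them.
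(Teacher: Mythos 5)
Your overall strategy — reduce to root modules via Proposition \ref{roottostandard} and monoidality of $\T_s$, then induct on height using the short exact sequence of Lemma \ref{ses}, with Theorem \ref{aller} as the base — is exactly the structure of the paper's argument, and your discussion of matching the differential against the unique (up to scalar) degree-zero morphism via Theorem \ref{embedding} and the last sentence of Lemma \ref{ses} is the right point to emphasise. However, there is a genuine gap in the base case.

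Your base case consists only of simple roots $\a_t$, $t\neq s$, and your inductive step for a non-simple root invokes the minimality clause of Lemma \ref{ses} to produce a pair $\ga,\de\in\Phi^+\setminus\{\a_s\}$ of strictly smaller height. But that clause requires $\wt(\D)$ to \emph{not} be minimal in $\Phi^+\setminus\{\a_s\}$. The roots $\a_s+\a_t$, for $t$ adjacent to $s$, are non-simple yet \emph{are} minimal in $\Phi^+\setminus\{\a_s\}$: every expression of $\a_s+\a_t$ as a sum of two positive roots involves $\a_s$. So neither your base case nor your inductive step covers $\mu=\a_s+\a_t$. For this $\mu$ one needs $\T_s(\iota_\la({_s}\D(\a_s+\a_t)))\cong\iota_{s\la}(\D_s(\a_t))=\F_t 1_{s\la}$, which is precisely equation (\ref{tsinv}) of Theorem \ref{aller} — the equation you never invoke (you cite only (\ref{tsdef})). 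The paper avoids the problem by taking as its base case all \emph{minimal} roots of $\Phi^+\setminus\{\a_s\}$, i.e.\ those $\a$ not expressible as $\b+\ga$ with $\b,\ga\in\Phi^+\setminus\{\a_s\}$, and observing that the two equations (\ref{tsdef}) and (\ref{tsinv}) together handle exactly this collection. Once the base case is enlarged in this way, the rest of your argument matches the paper's: apply $\T_s$ to $f_{\b\ga}$, observe that its image spans the one-dimensional degree-zero Hom space (via Theorem \ref{embedding} and the final assertion of Lemma \ref{ses}), note that the analogous map $f'$ for $\prec_s$ spans the same space, and conclude the cones agree.
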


\begin{proof}
Since $\T_s$ is monoidal and additive, by Proposition \ref{roottostandard} it suffices to prove this for root modules. 
We will proceed by induction on the height of the root. First consider a root module for a minimal root - we define these to be roots $\a\in \Phi^+\setminus \{\a_s\}$ that cannot be expressed in the form $\b+\ga$ with $\b,\ga\in \Phi^+\setminus \{\a_s\}$. The proposition in this case follows from 
Theorem \ref{aller}.

Now suppose that $\a\in\Phi^+\setminus \{\a_s\}$ is not minimal. Consider the short exact sequence from Lemma \ref{ses}.
\[
  0\to q^{-\b\cdot\ga} {_s}\D(\b) \circ {_s}\D(\ga) \xrightarrow{f_{\b\ga}} {_s}\D(\ga)\circ{_s}\D(\b)\to {_s}\D(\a)^{\oplus m}\to 0.
 \]
Here $\b$ and $\ga$ are other roots in $\Phi^+\setminus \{\a_s\}$ with $\a=\b+\ga$ (they are a \emph{minimal pair} in the parlance of earlier works on quiver Hecke algebras). 
This identifies ${_s}\D(\a)^{\oplus m}$ as the cone of a nonzero morphism $f_{\b\ga}$ from $q^{-\b\cdot\ga} {_s}\D(\b) \circ {_s}\D(\ga)$ to ${_s}\D(\ga)\circ{_s}\D(\b)$. Recall from Lemma \ref{ses} that $\Hom_R(q^{-\b\cdot\ga} {_s}\D(\b) \circ {_s}\D(\ga),{_s}\D(\ga)\circ{_s}\D(\b))\cong k$.
By Theorem \ref{embedding}, $\iota_\la(f_{\b\ga})$ spans the one-dimensional space
\[
 \Hom_{K(\udot )} ( i_\la(q^{-\b\cdot\ga} {_s}\D(\b) \circ {_s}\D(\ga)),i_\la({_s}\D(\ga)\circ{_s}\D(\b)).
\]

Now apply $\T_s$. By inductive hypothesis, $\T_s(\iota_\la( {_s}\D(\b))\cong\iota_{s\la}(\D_s(s\b))$ and $\T_s(\iota_\la( {_s}\D(\ga))\cong\iota_{s\la}(\D_s(s\ga))$. Since $\T_s$ is an equivalence, $\T_s(\iota_\la(f_{\b\ga}))$ spans the one-dimensional space
\begin{equation}\label{homspace}
\Hom_{K(\udot)} (\iota_{s\la} (q^{\beta\cdot\ga} \D_s(s\b)\circ \D_s(s\ga)),\iota_{s\la}(\D_s(s\ga)\circ \D_s(s\b))).
\end{equation}
By Lemma \ref{ses} again, there is a short exact sequence
\[
0\to q^{-\b\cdot\ga} \D_s(s\b)\circ \D_s(s\ga) \xrightarrow{f'} \D_s(s\ga)\circ \D_s(s\b)\to \D(s\a)^{\oplus m} \to 0,
\]
and a similar argument shows that $\iota_{s\la}(f')$ also spans the one dimensional space in (\ref{homspace}). Therefore $\T_s(\iota_\la(f_{\b\ga}))$ and $\iota_{s\la}(f')$ are nonzero multiples of each other, hence their cones are isomorphic. This proves that $\T_s(\iota_\la( {_s}\D(\a))^{\oplus m}\cong\iota_{s\la}(\D_s(s\a))^{\oplus m}$ and we can take direct summands to conclude that $\T_s(\iota_\la( {_s}\D(\a))\cong\iota_{s\la}(\D_s(s\a))$, as required.
%
%
%
\end{proof}

\begin{lemma}\label{8.5}
Identify $_s\C$ and $\C_s$ with their essential images under $i_\la$ and $i_{s\la}$.
 Let $M$ be a module in $_s\C$ with a $\D$-flag. Then $\T_s(M)$ lies in $\C_s$ and has a $\D$-flag.
\end{lemma}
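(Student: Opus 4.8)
The plan is to transport a $\D$-flag of $M$ first through $\iota_\la$ and then through the autoequivalence $\T_s$, and to recognise the resulting object of $\Hom_{K^b(\udot)}(s\la,\cdot)$ as $\iota_{s\la}$ of a module in $\C_s$ carrying a $\D$-flag; here $\D_s$ denotes standard modules for $\prec_s$ and ${_s}\D$ those for ${_s}\!\!\prec$. Fix a filtration $0=M_0\subset M_1\subset\cdots\subset M_n=M$ with $M_i/M_{i-1}\cong q^{c_i}{_s}\D(\mu^{(i)})$. Since $_s\C$ is a Serre subcategory of $R\mods$, every $M_i$ and every subquotient $q^{c_i}{_s}\D(\mu^{(i)})$ lies in $_s\C$; by the characterisation used in the proof of Lemma \ref{standardext} (for a convex order with $\a_s$ maximal, a standard module lies in $_s\C$ precisely when its $\a_s$-component vanishes) each $\mu^{(i)}$ has $\a_s$-component zero. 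As the simple reflection $s$ permutes $\Phi^+\setminus\{\a_s\}$, each $s\mu^{(i)}$ is a root partition for $\prec_s$ with $\a_s$-component zero, hence $\D_s(s\mu^{(i)})\in\C_s$ by the corresponding characterisation for $\prec_s$ (now with $\a_s$ minimal).

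The functor $\iota_\la$ is triangulated, being the extension to homotopy categories of an additive functor on module categories, and by Lemma \ref{standardfpdim} each $M_i$ has finite projective dimension, so $\iota_\la(M_i)\in\Hom_{K^b(\udot)}(\la,\cdot)$. Applying the Horseshoe lemma to $0\to M_{i-1}\to M_i\to q^{c_i}{_s}\D(\mu^{(i)})\to 0$ produces a degreewise split short exact sequence of complexes of projectives, hence a distinguished triangle with terms $\iota_\la(M_{i-1})$, $\iota_\la(M_i)$, $q^{c_i}\iota_\la({_s}\D(\mu^{(i)}))$; its connecting map is $\iota_\la$ applied to a chain map between complexes of projective $R$-modules, so under the identification of Theorem \ref{embedding} it carries no bubbles, lying in $\Ext^1_R(-,-)\otimes 1\subset\Ext^1_R(-,-)\otimes\B_\la$. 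Applying the triangulated autoequivalence $\T_s$ and using Proposition \ref{tofstandard} to identify $\T_s\iota_\la({_s}\D(\mu^{(i)}))\cong\iota_{s\la}(\D_s(s\mu^{(i)}))$, we obtain a tower of triangles exhibiting $\T_s\iota_\la(M)$ as filtered with subquotients $q^{c_i}\iota_{s\la}(\D_s(s\mu^{(i)}))$.

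Now induct on $n$. The case $n=1$ is Proposition \ref{tofstandard} together with $\D_s(s\mu^{(1)})\in\C_s$. For $n>1$, suppose $\T_s\iota_\la(M_{n-1})\cong\iota_{s\la}(N_{n-1})$ with $N_{n-1}$ a module in $\C_s$ possessing a $\D$-flag; the top triangle reads $\iota_{s\la}(N_{n-1})\to\T_s\iota_\la(M)\to q^{c_n}\iota_{s\la}(\D_s(s\mu^{(n)}))\xrightarrow{\ \delta\ }\iota_{s\la}(N_{n-1})[1]$, and by Theorem \ref{embedding} the degree-zero morphism $\delta$ lies in $\Ext^1_R(q^{c_n}\D_s(s\mu^{(n)}),N_{n-1})\otimes\B_{s\la}$. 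Since $\B_{s\la}$ is non-negatively graded with degree-zero part $k$, $\delta$ is forced to be bubble-free as soon as $\Ext^1_R(\D_s(s\mu^{(n)}),N_{n-1})$ vanishes in all internal degrees $<c_n$. Granting this, $\delta$ is the class of a short exact sequence of $R$-modules $0\to N_{n-1}\to N\to q^{c_n}\D_s(s\mu^{(n)})\to 0$, and $\T_s\iota_\la(M)\cong\iota_{s\la}(N)$; then $N\in\C_s$ by Serre-closedness, and applying $\Hom_R(-,\co_s)$ for a proper costandard $\co_s$ to this sequence yields $\Ext^1_R(N,\co_s)=0$ by Lemmas \ref{flag} and \ref{prop:extorthog}, so $N$ has a $\D$-flag, completing the induction.

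The main obstacle is the internal-degree vanishing invoked above: that $\Ext^1_R(\D_s(\eta),N)$ is concentrated in degrees $\ge c$ whenever $\eta$ is a root partition with $\a_s$-component zero, $N$ carries a $\D$-flag, and $c$ is the grading shift on the relevant layer $q^c\D_s(\eta)$ of $\T_s\iota_\la(M)$. This should be extracted from the structure theory of standard modules by reducing, via Proposition \ref{roottostandard}, Lemma \ref{ses} and the adjunctions $\Ind\dashv\Res$ and (\ref{otheradjunction}), to $\Ext$-groups between root modules, where the normalisation making heads self-dual and the concentration in non-negative degrees of $\HOM(q^{-\b\cdot\ga}\D_\b\circ\D_\ga,\D_\ga\circ\D_\b)$ from Lemma \ref{ses} supply the required control; alternatively one first shows, by the same bootstrapping from standard modules, that $\T_s$ is cohomologically concentrated in degree zero on the subcategory of $\D$-flagged modules in $_s\C$, after which $\T_s\iota_\la(M)$ is automatically a module and the bubble bookkeeping is moot.
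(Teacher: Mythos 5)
Your inductive strategy on the flag length, with base case Proposition \ref{tofstandard}, is the same as the paper's, and you correctly pinpoint the delicate issue: by Theorem \ref{embedding}, the connecting morphism $\delta$ in the resulting triangle lives in $\Ext^1_R(-,-)\otimes\B_{s\la}$, so before you can recognise the cone as $i_{s\la}$ of an extension of $R$-modules you must rule out contributions from the positive-degree part of $\B_{s\la}$. But you leave exactly this step unresolved, naming it ``the main obstacle'' and only gesturing at two possible routes --- a positivity bound on internal degrees of $\Ext^1$ between $\D$-flagged modules, or a cohomological-concentration statement for $\T_s$ --- without proving either.

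The paper's proof follows what you call your second alternative, carried out inductively rather than as a separate preliminary. It writes $M$ as an extension $0\to M'\to M\to M''\to 0$ of two modules with shorter flags, observes that $\T_s(M)$ is the cone of a morphism between $\T_s(M'')$ and $\T_s(M')[1]$, each of which is (by induction) $i_{s\la}$ of a module, so $\T_s(M)$ is represented by a complex whose terms are direct sums of $\F_{i_1}\cdots\F_{i_n}$'s corresponding to projective $R$-modules; it then takes the long exact sequence in homology of the triangle $\T_s(M'')\to\T_s(M)\to\T_s(M')$. By the inductive hypothesis the homologies of $\T_s(M')$ and $\T_s(M'')$ are concentrated in degree zero, hence so is that of $\T_s(M)$, exhibiting $\T_s(M)$ as a genuine $R$-module sitting in a short exact sequence between two $\D$-flagged modules in $\C_s$. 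This long-exact-sequence step is precisely what, in your words, makes ``the bubble bookkeeping moot'' --- but it is the actual content of the inductive step, not something to be deferred, and your proposal would need to carry it out to be complete.
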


\begin{proof}
 We proceed by induction on the length of the $\D$-flag of $M$. When this length is 1, this is Proposition \ref{tofstandard}.
 So now suppose that $M$ has a $\D$-flag of length greater than one and this result is known for all modules with a smaller $\D$-flag than that of $M$.
 
 Then there is a short exact sequence
 \[
  0\to M'\to M \to M'' \to 0
 \]
 where $M'$ and $M''$ have a $\D$-flag of smaller length than that of $M$.

 The module $M$ is identified with the cone of a morphism from $M''[-1]$ to $M'$. Since $\T_s$ is exact, $\T_s(M)$ is the cone of a morphism from $\T_s(M'')[-1]$ to $\T_s(M)$. By induction on the length of a $\D$-flag, we know that $\T_s(M'')$ and $\T_s(M)$ are in $\C_s$. Therefore they are identified with a complex of projective $R$-modules, hence the same is true of $\T_s(M)$ since it appears as a cone. Now take the long exact sequence in homology associated to the triangle
 \[
  \T_s(M'')\to \T_s(M)\to \T_s(M')\xrightarrow{+1}.
 \]
 Since the homologies of $\T_s(M'')$ and $\T_s(M')$ are known to be concentrated in degree zero by our inductive hypothesis, the same is true of $\T_s(M)$. Hence $\T_s(M)$ is the class of a module, and appears as an extension of two modules with $\D$-flags. Therefore it lies in $\C_s$ and has a $\D$-flag.
\end{proof}

Consider a projective $P$ in $_s\C$.
In the proof of Lemma \ref{standardext}, we have shown that $P$ has a $\D$-flag. Thus by Lemma \ref{8.5}, $\T_s(P)$ lies in $\C_s$.

\begin{lemma}\label{extzero}
 Let $P$ be projective in $_s\C$ and let $\D$ be a standard module in $\C_s$. Then $\Ext^i(\T_s(P),\Delta)=0$ for all $i>0$.
\end{lemma}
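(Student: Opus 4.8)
The plan is to exploit the fact that $\T_s$ is a triangulated equivalence together with the adjunction between $\Hom$-spaces in the homotopy category and $\Ext$-groups of modules. Since $\T_s$ is an equivalence of triangulated categories and $\D$ lies in $\C_s$, which we have identified (via $i_{s\la}$) with a subcategory of $K^b(R\prmod)$, we may transport the $\Ext$-vanishing question back across $\T_s$. Concretely, $\T_s^{-1}$ carries $\D$ (a standard module $\D_s(s\mu)$ for some root partition $\mu$ with trivial $\a_s$-component) to $\iota_\la({_s}\D(\mu))$ by Proposition \ref{tofstandard} — here one uses that every standard module in $\C_s$ arising in the $\D$-flag sense comes from such a $s\mu$, which follows because the convex order $\prec_s$ was chosen so that all non-$\a_s$ roots are obtained by applying $s$ to roots in ${_s}\!\!\prec$. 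Thus it suffices to show $\Ext^i(P, {_s}\D(\mu)) = 0$ for $i > 0$, where the $\Ext$ is computed in $K^-(\udot)$ after applying $i_\la$.

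**Key steps.** First I would invoke Theorem \ref{embedding} to translate: for $X, Y \in K^b(R(\nu)\prmod)$ one has $\Hom_{K^-(\udot)}(i_\la X, i_\la Y) \cong \Hom_{K^b(R\prmod)}(X,Y) \otimes \B_\la$, and since $\B_\la$ is non-negatively graded with degree-zero part equal to $k$, the degree-zero part of the left side is just $\Hom_{K^b(R\prmod)}(X,Y)$. Applying this with shifts, $\Ext^i$ in the homotopy category of $\udot$ between $i_\la(P)$ and $i_\la({_s}\D(\mu))$ reduces to $\Ext^i_R(P, {_s}\D(\mu))$ (using that $P$ has a $\D$-flag, hence finite projective dimension by Lemma \ref{standardfpdim}, so it really is an object of $K^b(R\prmod)$, and that a standard module also lies in $K^b(R\prmod)$ by the same lemma). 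Second, $P$ has a $\D$-flag — shown inside the proof of Lemma \ref{standardext} — so by Proposition \ref{roottostandard} and dévissage it is enough to treat the case where $P$ is a root module, and then to show $\Ext^i_R(P, {_s}\D(\mu)) = 0$ for $i>0$. Third, and this is the crux, one wants to reduce to Proposition \ref{prop:extorthog}, which asserts $\Ext^i$-orthogonality between standard and proper costandard modules. Since $P$ is projective in ${_s}\C$ and ${_s}\D(\mu)$ is a standard module for the convex order ${_s}\!\!\prec$, and $P$ has a ${_s}\D$-flag, the vanishing $\Ext^i(P, {_s}\D(\mu)) = 0$ should follow from the argument already run in Lemma \ref{standardext}: filter the standard module ${_s}\D(\mu)$ — or rather use that a module with a $\D$-flag whose pieces have finite projective dimension is itself of finite projective dimension, and combine the $\D$-flag of $P$ with Proposition \ref{prop:extorthog}. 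More precisely: one reduces $\Ext^i(P,{_s}\D(\mu))$ along the $\D$-flag of $P$ to $\Ext^i({_s}\D(\la'),{_s}\D(\mu))$ for standard modules ${_s}\D(\la')$; then, since proper costandard modules have $\Ext^{>0}$-vanishing against standard modules (Proposition \ref{prop:extorthog}) and simples embed in proper costandards with smaller-labelled cokernels (Theorem \ref{klrclass}), an induction on $\prec$ identical to the one in Lemma \ref{standardext} gives the result.

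**Main obstacle.** The delicate point is the very first reduction — transporting the $\Ext$-vanishing across $\T_s$ and then across $i_\la$. One must be careful that $\T_s^{-1}(\D) \cong i_\la({_s}\D(\mu))$ really does hold for the relevant standard module $\D$ of $\C_s$: this is exactly Proposition \ref{tofstandard} with $\la$ replaced appropriately and $\mu$ chosen so that $s\mu$ labels $\D$, and one needs that every standard module occurring in $\C_s$ is of this form, which is guaranteed by the construction of $\prec_s$ from ${_s}\!\!\prec$. The second subtlety is that $\Ext$ between the $i_\la$-images is genuinely larger than $\Ext$ in $R\mods$ unless one passes to degree zero and uses nondegeneracy via Theorem \ref{embedding}; one must check that the bubble algebra $\B_\la$ only contributes in non-negative degrees so that it cannot create spurious extensions in the relevant degree. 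Once these bookkeeping points are settled, the homological input is entirely supplied by Lemma \ref{standardext}, Proposition \ref{prop:extorthog}, and Lemma \ref{standardfpdim}, and no new computation is required.
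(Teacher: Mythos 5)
Your strategy is essentially the paper's: transport the question across $\T_s$ using Proposition \ref{tofstandard} (which identifies $\T_s^{-1}(\Delta)$ with $i_\la$ of a standard module in $_s\C$), use Theorem \ref{embedding} to relate $\Hom$-spaces in $K^-(\udot)$ to $\Ext_R$ up to tensoring with the non-negatively graded bubble algebra, and conclude from $\Ext$-vanishing in $_s\C$. Where you overcomplicate is at the end: once you know $\T_s^{-1}(\Delta)$ is an $R$-module lying in $_s\C$, you should simply cite Lemma \ref{standardext} (which gives $\Ext^i(P,M)=0$ for \emph{all} $M$ in $_s\C$ and $i>0$), rather than re-running its proof; in particular, there is no need to d\'evissage $P$ to root modules, and the intermediate target $\Ext^i({_s}\D(\la'),{_s}\D(\mu))$ that this produces is \emph{not} directly supplied by Proposition \ref{prop:extorthog}, which concerns standard-against-proper-costandard, not standard-against-standard. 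The correct route you also mention (go via proper costandards and the $\prec$-induction) is precisely the content of Lemma \ref{standardext}, so citing it directly is both shorter and avoids the gap.
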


\begin{proof}
 There are isomorphisms
 \[
  \Ext^i(\T_s(P),\Delta)\otimes \B_\la\cong \Ext^i(P,\T_s\inv(\Delta))\otimes \B_{s\la}.
 \]
As $\T_s\inv$ sends standard modules to standard modules, this is zero by Lemma \ref{standardext}.
\end{proof}

\begin{lemma}\label{technicalext}
Let $Q$ be a finitely generated $R$-module and let $d$ be a non-negative integer. Suppose that $\Ext^d(Q,L)=0$ for all simple $R$-modules $L$. Then $\Ext^d(Q,M)=0$ for all finitely generated modules $M$.
\end{lemma}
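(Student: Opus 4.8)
The plan is to treat first the case where $M$ is finite dimensional, and then to reduce the general finitely generated case to it by a degreewise truncation argument exploiting that $R$ is Laurentian. When $M$ is finite dimensional it has a finite composition series, and feeding the associated short exact sequences into the long exact sequence for $\Ext^\bullet(Q,-)$ shows, by induction on the length of the series, that $\Ext^d(Q,M)=0$ as soon as $\Ext^d(Q,L)=0$ for every simple subquotient $L$.

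For general finitely generated $M$, the key point is that each graded component of $\Ext^d(Q,M)$ depends on only boundedly many graded components of $M$. Since $Q$ is finitely generated it is a module over finitely many blocks $R(\nu)$, each of which is Noetherian, so $Q$ admits a resolution $P_\bullet\to Q$ by finitely generated graded projective modules, and $\Ext^d(Q,M)$ is the $d$-th cohomology of the complex of graded vector spaces $\Hom_R(P_\bullet,M)$. Fix an internal degree $k$. Each of $\Hom_R(P_{d-1},M)_k$, $\Hom_R(P_d,M)_k$, $\Hom_R(P_{d+1},M)_k$, together with the differentials between them, is determined by the images of the finitely many homogeneous generators of the $P_i$ under degree-$k$ maps; since the blocks supporting $M$ are concentrated in degrees $\ge -B$ for some $B\ge 0$ and $M$ is bounded below, all the graded components $M_j$ involved in this computation lie in a range of degrees $\le E$ for a suitable $E=E(k,d)$. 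Hence $\Ext^d(Q,M)_k$ is unchanged if $M$ is replaced by any module agreeing with it in degrees $\le E$.

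For $n\in\Z$ let $N_n\subseteq M$ be the submodule generated by the homogeneous elements of $M$ of degree $\ge n$. Using again that the relevant part of $R$ vanishes in degrees below $-B$, one checks that $(N_n)_j=0$ for $j<n-B$, so the projection $M\to M/N_n$ is an isomorphism in all degrees $<n-B$; moreover $N_n\supseteq\bigoplus_{j\ge n}M_j$, so $M/N_n$ is a quotient of $\bigoplus_{j<n}M_j$, which is finite dimensional because $M$ is finitely generated over the Laurentian algebra $R$ and hence bounded below with finite dimensional graded components. Choosing $n>E+B$, the map $M\to M/N_n$ is an isomorphism in degrees $\le E$, whence $\Ext^d(Q,M)_k\cong\Ext^d(Q,M/N_n)_k=0$ by the finite dimensional case; since $k$ was arbitrary, $\Ext^d(Q,M)=0$.

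The only step calling for genuine care is the second one, bounding the window of degrees of $M$ that can affect $\Ext^d(Q,M)_k$: this is exactly where finite generation of $Q$ enters, through a finitely generated projective resolution and hence Noetherianity of the blocks $R(\nu)$, and it is the mechanism by which the finite dimensional case controls the general one. The bookkeeping with the bounds $B$ and $E$ and with the submodules $N_n$ is routine.
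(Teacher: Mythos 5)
Your proof is correct and takes essentially the same approach as the paper: both choose a finitely generated projective resolution of $Q$ (using Noetherianity), truncate $M$ by the submodule $N_n$ (the paper's $M(e)$) generated in high internal degrees, show $\Ext^d(Q,M)\cong\Ext^d(Q,M/N_n)$ for $n\gg 0$, and finish with Jordan--H\"older induction on the finite dimensional quotient. The only cosmetic difference is that you carry out the comparison on the $\Hom$ complex directly, internal degree by internal degree, whereas the paper invokes the long exact sequence attached to $0\to M(e)\to M\to M/M(e)\to 0$ after showing the relevant $\Ext$ groups of $M(e)$ vanish.
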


\begin{proof}
Let $P^\bullet$ be a projective resolution of $Q$. Since $R$ is Noetherian, we can, and do, choose this resolution so that each term is finitely generated. For an integer $e$, let $M(e)$ be the submodule of $M$ generated by all $M_n$ with $n\geq e$. Since $R$ is Laurentian and $P^d$ is finitely generated, there are no chain maps from $P^\bullet$ to $M(e)$ for sufficiently large $e$. Therefore $\Ext^d(Q,M(e))=0$ for such $e$. Similarly, we can simultaneously ensure that $e$ is large enough that $\Ext^{d-1}(Q,M(e))=0$. This implies that $\Ext^d(Q,M)\cong \Ext^d(Q,M/M(e))$. As $M$ is finitely generated and $R$ is Laurentian, the module $M/M(e)$ is finite dimensional and a simple induction on the length of a Jordan-Holder series shows that $\Ext^d(Q,N)=0$ for any finite dimensional module $N$.
\end{proof}

\begin{lemma}\label{projtoproj}
Let $P$ be a projective object in $_s\C$. Then
 $\T_s(P)$ is projective in $\C_s$.
\end{lemma}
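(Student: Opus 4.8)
The goal is to upgrade "$\T_s(P)$ lies in $\C_s$" (already established) to "$\T_s(P)$ is projective in $\C_s$". The natural criterion is: an object $Q$ of $\C_s$ is projective in $\C_s$ if and only if $\Ext^1_R(Q,M)=0$ for all $M\in\C_s$. (This uses that $\C_s=R/\langle e_s\rangle\mods$ is a module category over a quotient algebra, so projectivity in $\C_s$ is detected by vanishing of $\Ext^1$ computed \emph{inside} $\C_s$; one must then relate $\Ext^1_{\C_s}$ to $\Ext^1_R$.) So the first step is to record this homological characterisation of projectivity in the Serre quotient category $\C_s$, being careful about the distinction between Ext computed in $R\mods$ and in $\C_s$ — this is exactly the subtlety flagged in the remark after Lemma \ref{standardext}.

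The core of the argument is then to show $\Ext^i_R(\T_s(P),M)=0$ for all $i>0$ and all $M\in\C_s$, which is stronger than needed but is what the preceding lemmas deliver. First reduce to $M$ simple (or finite dimensional) via Lemma \ref{technicalext}: since $\T_s(P)$ is represented by a bounded complex of projectives (it has a $\D$-flag by Lemma \ref{8.5}, hence finite projective dimension by Lemma \ref{standardfpdim}), it is "finitely generated" in the relevant sense, so it suffices to check $\Ext^d(\T_s(P),L)=0$ for simple $L$. For $L\in\C_s$ simple, one runs the dévissage of Lemma \ref{standardext}: embed $L$ into a proper costandard module $\co$ for the convex order $\prec_s$ having $\a_s$ largest, so that $\co\in\C_s$ too (or $\co$ has $\a_s$-component, in which case one peels it off using the adjunction \eqref{otheradjunction}), then induct on $\prec_s$ using the long exact sequence for $0\to L\to\co\to Q\to 0$, with the base input being $\Ext^i_R(\T_s(P),\D)=0$ from Lemma \ref{extzero}. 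Since $\T_s(P)\in\C_s$ and $L\in\C_s$, the $i=1$ term is handled by the projectivity-in-$\C_s$ characterisation once we know the other terms vanish — or, more cleanly, one observes that Lemmas \ref{extzero} and \ref{flag} together show $\T_s(P)$ has a $\D$-flag (already known), and then the argument of Lemma \ref{standardext}, run verbatim with $\T_s(P)$ in place of its "$P$" and $\prec_s$ in place of "$_s\!\!\prec$", gives $\Ext^i_R(\T_s(P),M)=0$ for all $M\in\C_s$ and $i>0$.

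Finally, translate back: $\Ext^1_R(\T_s(P),M)=0$ for all $M\in\C_s$ forces $\Ext^1_{\C_s}(\T_s(P),M)=0$ for all such $M$ (the $\C_s$-Ext is a subquotient of, or receives a map from, the $R$-Ext in degree $1$ since $\C_s$ is closed under submodules and quotients in $R\mods$), so $\T_s(P)$ is projective in $\C_s$.

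\textbf{Main obstacle.} The delicate point is the passage between $\Ext$ groups computed in $R\mods$ and in $\C_s$: a priori projectivity in the Serre subcategory $\C_s$ is only about $\Ext^1_{\C_s}$, and the lemmas give vanishing of $\Ext^\bullet_R$. Getting the implication to run in the right direction — using that $\C_s$ is a \emph{quotient} (modules killed by the idempotent $e_s$) rather than merely a subcategory, so that projective covers in $\C_s$ exist and $\Ext^1_{\C_s}(\T_s(P),-)=0$ is equivalent to projectivity — is where care is required; the surrounding $\Ext^{>1}$-vanishing in $R\mods$ is there precisely to make this comparison unambiguous. The rest is a faithful rerun of the dévissage already carried out in Lemma \ref{standardext}.
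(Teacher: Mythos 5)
Your broad plan is right — reduce to simples, feed in Lemma \ref{extzero}, invoke Lemma \ref{technicalext} — but the devissage you outline has a genuine circularity at the critical degree $i=1$, and the paper's proof is structured specifically to avoid it. In Lemma \ref{standardext}, the long exact sequence for $0\to M\to\co\to Q\to 0$ only yields $\Ext^i(P,M)\cong\Ext^{i-1}(P,Q)$ for $i\geq 2$; the $i=1$ case is settled separately by appealing to the \emph{known} projectivity of $P$ in $_s\C$. Rerunning that argument ``verbatim with $\T_s(P)$ in place of $P$'' therefore requires the very projectivity of $\T_s(P)$ in $\C_s$ that you are trying to prove. Your phrase ``the $i=1$ term is handled by the projectivity-in-$\C_s$ characterisation once we know the other terms vanish'' does not close this loop: vanishing of $\Ext^{\geq 2}$ does not by itself give vanishing of $\Ext^1$.

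The paper instead runs a \emph{descending} induction on $d$ and, crucially, uses a \emph{surjection} $\D\twoheadrightarrow L$ from a standard module (not an injection $L\hookrightarrow\co$). The short exact sequence $0\to K\to\D\to L\to 0$ combined with Lemma \ref{extzero} gives $\Ext^d(\T_s(P),L)\cong\Ext^{d+1}(\T_s(P),K)$ — the degree shift now goes \emph{up}, not down. Since $\T_s(P)$ has a $\D$-flag it has finite projective dimension, so $\Ext^{d+1}$ vanishes for $d\gg 0$, giving the base case; the inductive step uses Lemma \ref{technicalext} to pass from the (inductively known) vanishing of $\Ext^{d+1}$ on all simples to its vanishing on the finitely generated module $K$. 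This bootstraps all the way down to $d=1$ with no circularity.

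Finally, the ``main obstacle'' you flag — comparing $\Ext^1_R$ with $\Ext^1_{\C_s}$ — is not actually an obstacle, and the paper rightly doesn't discuss it. Since $e_s$ is an idempotent and $\C_s=R/\langle e_s\rangle\mods$ is a Serre subcategory of $R\mods$ closed under extensions (if $M,Q$ are killed by $e_s$ and $0\to M\to E\to Q\to 0$ is exact, then $e_sE=e_s^2E\subset e_sM=0$), one has $\Ext^1_{\C_s}(Q,M)=\Ext^1_R(Q,M)$ for $Q,M\in\C_s$. So the reduction to ``$\Ext^1_R(\T_s(P),L)=0$ for all simple $L\in\C_s$'' is immediate. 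The real work is the degree-shifting descending induction, which your proposal is missing.
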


\begin{proof}
It suffices to show that $\Ext^1(\T_s(P),L)=0$ for every simple module $L$ in $\C_s$.

We will do this by showing that $\Ext^d(\T_s(P),L)=0$ for every simple module $L$ and every integer $d\geq 1$ by a decreasing induction on $d$.

Since $\T_s(P)$ has a $\D$-flag, it has finite projective dimension. Therefore this result is true for all sufficiently large $d$ which establishes the base case of the induction.

For our inductive hypothesis, let us now assume that $\Ext^{d+1}(\T_s(P),L)=0$ for every simple module $L$.

Let $L$ be a simple module in $\C_s$. 
 and consider the short exact sequence
\[
 0\to K \to \Delta \to L\to 0
\]
where $\D$ is the unique standard module surjecting onto $L$.

Apply $\Hom(\T_s(P),-)$ and consider the corresponding long exact sequence of Ext groups. By Lemma \ref{extzero}, we obtain an isomorphism $$\Ext^d(\T(P),L)\cong \Ext^{d+1}(\T(P),K).$$ 

Note that $K$ is finitely generated since $R$ is Noetherian. By Lemma \ref{technicalext} and the inductive hypothesis, $\Ext^{d+1}(\T(P),K)=0$. This completes the inductive step of the proof and hence completes the proof of the Lemma.
%
%
\end{proof}

\begin{proof}[Proof of Theorem \ref{reflectionfunctor}]
Identify $_s\C$ and $\C_s$ with their essential images under the faithful functors $i_\la$ and $i_{s\la}$.
Let $P$ be projective in $_s\C$. Then by Lemma \ref{projtoproj},
there is a projective $Q$ in $\C_s$ such that $\T_s(i_\la(P))\cong i_{s\la}(Q)$. Since $\T_s$ is an equivalence, by Theorem \ref{embedding}, there is an induced isomorphism
\[
 \End_R(P)\otimes \B_\la\cong \End_R(Q)\otimes \B_{s\la}.
\]
As $\T_s$ also induces an isomorphism $\B_\la\cong \B_{s\la}$, we get an induced isomorphism
\[
 \End_R(P)\cong \End_R(Q).
\]

By Morita theory, an abelian category is governed by the endomorphism algebra of a projective generator. Since $\T_s\inv$ induces an analogous isomorphism, it must be that $\T_s$ induces an equivalence of categories $_s\C\cong \C_s$, as required.

The fact that $\T_s$ decategorifies to Lusztig's braid group action follows from the fact it is monoidal and the identities (\ref{tsdef}) and (\ref{tsinv}).
\end{proof}

%
%
 \section{restriction of categorical representations}
 
We remind the reader that $\Phi$ is simply laced, and of finite or affine type.

\begin{definition}\label{facedefinition}
 A face is a decomposition of $\Phi^+$ into three disjoint subsets
 \[
  \Phi^+=F^+\sqcup F\sqcup F^-
 \]
such that, for all $x\in\spann_{\R_{\geq 0}}F$:
\begin{enumerate}
\item If $y\in \spann_{\R_{\geq 0}}F^+$ is non-zero, then  $x+y\notin \spann_{\R_{\geq 0}}(F^- \cup F)$.
 \item If $y\in \spann_{\R_{\geq 0}}F^-$ is non-zero, then  $x+y\notin \spann_{\R_{\geq 0}}(F^+ \cup F)$.
\end{enumerate}
\end{definition}

We often abuse notation and use $F$ to refer to the entire face.

\begin{example}\label{faceexample}
Suppose $\Phi^+$ is of type $A_n^{(1)}$ and let $1\leq e\leq n$ be an integer. Let $\la\map {\R\Phi^+}{\R}$ be a generic linear map subject to the conditions
\[
 \la(\a_0)=\la(\a_1)=\cdots=\la(\a_{e-1})=0=\la(\a_e+\a_{e+1}+\cdots+\a_n) 
\]and
$\la(\a_e)<\cdots < \la(\a_n)$.

Then $F=\la\inv(0)\cap \Phi^+$ is a face of type $A_e^{(1)}$. 
\end{example}
We choose this example because it appears in \cite{maksimauth} and \cite{richewilliamson}. Also it is not conjugate to a standard face under the Weyl group so the 2-functor of Theorem \ref{homotopyface} cannot be obtained as a composition of reflection functors $\T_s$.

Here is an alternative viewpoint on faces:

\begin{theorem}\cite[Lemma 1.10]{tingleywebster}\label{linearfunctionals}
 For any face $\Phi^+=F^+\sqcup F\sqcup F^-$, there is a sequence of linear functionals $\{\la_n\}_{n\in\N}$ on $\R\Phi$ such that
  \begin{enumerate}
   \item $F\subset \ker (\la_n)$ for all $n$, 
   \item For all $\a\in F^+$, $\la_n(\a)>0$ for all $n\gg0$,
   \item For all $\a\in F^-$, $\la_n(\a)<0$ for all $n\gg 0$.
  \end{enumerate}
\end{theorem}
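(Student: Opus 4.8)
The plan is to reduce the existence of the sequence $\{\la_n\}$ to a compactness-plus-separation argument in the finite-dimensional space $(\R\Phi)^*$. First I would restrict attention to the subspace $V = (\R\Phi / \spann_\R F)^*$, the space of linear functionals on $\R\Phi$ that vanish identically on $F$ (equivalently, on $\spann_\R F$); condition (1) says precisely that every $\la_n$ is forced to live in $V$, so we may work inside $V$ from the outset. Inside $V$, the images $\bar\a$ of the roots $\a \in F^+$ and $\a \in F^-$ under the quotient $\R\Phi \to \R\Phi/\spann_\R F$ are finitely many nonzero vectors (nonzero: if $\bar\a = 0$ for some $\a \in F^+$ then $\a \in \spann_\R F$, and one checks using the face axioms of Definition \ref{facedefinition} — applied with a suitable convex combination — that this is impossible, since it would put $\a \in \spann_{\R_{\ge 0}} F$ up to sign; the same for $F^-$). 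So the task becomes: find a sequence $\{\la_n\} \subset V$ with $\la_n(\bar\a) > 0$ eventually for every $\bar\a$ coming from $F^+$, and $\la_n(\bar\a) < 0$ eventually for every $\bar\a$ from $F^-$.

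The key geometric input is that the closed convex cone $C^+ := \spann_{\R_{\ge 0}}\{\bar\a : \a \in F^+\} \subset \R\Phi/\spann_\R F$ and the corresponding cone $C^-$ for $F^-$ satisfy $C^+ \cap (-C^-) = \{0\}$, and more: $C^+ \cap C^- = \{0\}$ and neither contains a line. This is exactly what the face axioms buy us — conditions (1) and (2) of Definition \ref{facedefinition}, read modulo $\spann_\R F$, say that a nonzero element of $C^+$ cannot be written as (minus) an element of $C^-$, and vice versa. Given this, standard convex separation (e.g. applying a separating-hyperplane theorem, or picking an interior point of the dual cone) produces a single functional $\mu \in V$ with $\mu(\bar\a) > 0$ for all $\bar\a$ in the (finite) generating set of $C^+ \setminus\{0\}$ and $\mu(\bar\a) < 0$ for all generators of $C^-\setminus\{0\}$. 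In fact since $F^\pm$ are finite, one can even take the constant sequence $\la_n = \mu$ for all $n$; the "for all $n \gg 0$" phrasing in the statement is only needed because $\Phi^+$ may be infinite, but here the relevant images still span finitely generated cones, so no limiting is required — though I would phrase the argument to allow an honest sequence in case one wants $\la_n$ to satisfy auxiliary genericity properties, simply by perturbing $\mu$ slightly.

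The main obstacle, and the only nontrivial point, is verifying the cone condition $C^+ \cap C^- = \{0\}$ and the pointedness, purely from Definition \ref{facedefinition}. The subtlety is that Definition \ref{facedefinition} only quantifies over $x \in \spann_{\R_{\ge 0}} F$ and $y$ in the $F^\pm$-cones separately, so one must chase through: if $v \in C^+ \cap C^-$ is nonzero, lift $v$ to an element $y^+ \in \spann_{\R_{\ge 0}} F^+$ and to $y^- \in \spann_{\R_{\ge 0}} F^-$ with $y^+ - y^- \in \spann_\R F$; writing $y^+ - y^- = x_1 - x_2$ with $x_1, x_2 \in \spann_{\R_{\ge 0}} F$ gives $y^+ + x_2 = y^- + x_1$, and then axiom (1) applied to $x = x_2$, $y = y^+$ forces $y^+ + x_2 \notin \spann_{\R_{\ge 0}}(F^- \cup F)$, contradicting $y^+ + x_2 = y^- + x_1 \in \spann_{\R_{\ge 0}}(F^- \cup F)$ — provided $y^+ \ne 0$. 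One handles $y^+ = 0$ (equivalently the pointedness of $C^-$) symmetrically via axiom (2). Once this is in hand, the separation step and the construction of $\{\la_n\}$ are routine.
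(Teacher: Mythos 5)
The paper does not prove this statement; it simply cites \cite[Lemma 1.10]{tingleywebster}. So there is no in-paper proof to compare against, and I assess your argument on its own.

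Your reduction to the quotient $\R\Phi/\spann_\R F$, the cone-theoretic reformulation, and the verification from Definition \ref{facedefinition} that $C^+\cap C^-=\{0\}$, that $C^\pm$ are pointed, and that $\bar\a\neq 0$ for $\a\in F^\pm$ are all correct. However, there is a genuine gap in the separation step, and it stems from a false claim: you write ``since $F^\pm$ are finite, one can even take the constant sequence,'' but in affine type $\Phi^+$ is infinite and so are $F^\pm$ in general (cf.\ Example \ref{faceexample}, where $F$ is an affine $A_e^{(1)}$ system and its complement is also infinite). You then hedge that ``the relevant images still span finitely generated cones,'' which happens to be true in the finite/affine setting, but you give no argument for it, and it is not automatic: a pointed convex cone in finite dimensions need not admit a strictly positive linear functional on its nonzero part (e.g.\ the open upper half-plane together with the nonnegative $x$-axis in $\R^2$). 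Without polyhedrality (or at least a pointed \emph{closure}) of $D=C^+-C^-$, the interior-of-the-dual-cone argument does not produce the required $\mu$, and this is exactly why \cite{tingleywebster} formulate the statement with a sequence $\{\la_n\}$ rather than a single functional.

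To close the gap one must use the structure of affine roots. The point is that $\delta$ lies in exactly one of $F^+$, $F$, $F^-$. If $\delta\in F$ the quotient collapses the $\delta$-direction and the images $\bar\a$ for $\a\in\Phi^+$ take only finitely many values, so $C^\pm$ are polyhedral. If $\delta\in F^+$ (the case $\delta\in F^-$ is symmetric), one checks from axiom (1) with $x=0$, $y=(k'-k)\delta$ that for each finite root $\bar\b$ the set of $k$ with $\bar\b+k\delta\in F^+$ is an upper interval, and from the same axiom that $F$ and $F^-$ are finite; then $C^+$ is generated by $\bar\delta$ together with the finitely many $\overline{\bar\b+k_0(\bar\b)\delta}$, hence polyhedral. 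Only after this structural analysis does the separation theorem yield a single $\mu$, and hence a constant sequence. A second, smaller, issue: you assert $\mu>0$ only ``on the finite generating set of $C^+\setminus\{0\}$'' where what is required is $\mu(\bar\a)>0$ for \emph{every} $\a\in F^+$; this follows since each such $\bar\a$ is a nonzero nonnegative combination of generators on which $\mu>0$, but the statement as written conflates the two. With these two points repaired, the argument is a correct self-contained proof in the paper's (finite or simply-laced affine) setting.
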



Fix a face $F$ of $\Phi$. Let $\Delta_F$ be the set of positive real roots in $F$ which are not sums of other positive roots in $F$. Let $\Phi_F$ be the corresponding root system whose simple roots are $\D_F$. Since $\Phi$ is at worst of affine type, then as discussed in \cite[\S 3.2]{tingleywebster}, $\Phi_F$ is a product of finite and affine root systems (although imaginary root spaces may decompose, see \cite[Remark 3.16]{tingleywebster} for an example).

Under our standing assumptions on $\Phi$, namely that it is either $\Phi$ of finite type, or of symmetric affine type and the field $k$ is of characteristic zero, then to each face $F$, a quiver Hecke algebra $R_F$ is constructed in \cite{facefunctors}. 
This is a quiver Hecke algebra for the face root system $\Phi_F$. There is a choice of polynomials $Q_{ij}$ to define this face quiver Hecke algebra which is implicitly determined in \cite{facefunctors}. With the same choice of parameters, we can define the face 2-category $\U_F$. 


This construction is actually more general than as stated in the paragraph above, see \cite[Assumption 3.11]{facefunctors} for a precise statement. The restrictions on $\Phi$ and the characteristic of $k$ ensure that a theory of standard modules exists. However, this construction does not require the full theory of standard modules, but only for the theory of the root modules $\D(\a)$ for each $\a\in\D_F$. Such a theory of standard modules exists for the face of Example \ref{faceexample} in all characteristics, as mentioned in \cite[Assumption 3.11]{facefunctors}. Therefore we can meaningfully talk about $\U_F$ and its offshoots in this example.

We now define a 2-functor from $\U_F$ to $K^b(\udot)$. Since $\U_F$ is defined by generators and relations, to define this 2-functor, it suffices to give the image of each of the generators. We check the relations in the proof of Theorem \ref{homotopyface}.

The root lattice $P_F$ of $\Phi_F$ comes equipped with a natural map to $P$, which determines what our 2-functor does on objects.

For $\a\in \D_F$ and $\la\in P_F$, we send $\F_\a 1_\la$ to $i_\la(\D(\a))$. We send $\E_\a1_\la$ to its biadjoint, and $\eta$ and $\nu$ to the corresponding adjunction maps.

The image of the 2-morphism $\tau$ is the image of the corresponding element of $\Hom(\D(\a)\circ \D(\b),\D(\b)\circ\D(\a))$ determined in \cite[Lemma 3.10]{facefunctors}. The image of the 2-morphism $x$ is the image of the corresponding endomorphism of $\D(\a)$ determined by \cite[Theorem 3.12]{facefunctors}.
 
 \begin{theorem}\label{homotopyface}
  The above assignments define a 2-functor $\U_F \to K^b(\U)$.
 \end{theorem}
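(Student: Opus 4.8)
The plan is to verify that the proposed assignment respects each of the defining relations of $\U_F$, working relation by relation through the presentation recalled in \S\ref{sec:2cat}. The key structural point is that every relation in $\U_F$ is, via the nondegeneracy theorem (Theorem \ref{nondegeneracy}) applied to $R_F$, an identity among morphisms that lives purely in the subcategory generated by upward strands, or is obtained from such by adjunction. Concretely, the isotopy relations and the right adjunction relations (\ref{rightadj}) hold automatically because we have \emph{defined} $\E_\a1_\la$ to be the biadjoint of $\F_\a1_\la = i_\la(\D(\a))$ and $\eta,\varepsilon$ to be the structure maps of that adjunction; this is the standard ``free'' part of checking a 2-functor out of a diagrammatic 2-category. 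So the content is concentrated in the quiver Hecke relations (\ref{qha})--(\ref{qhalast}) and the invertibility relations (\ref{rightcross})--(\ref{inv3}).

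For the quiver Hecke relations, the point is that $i_\la$ is a monoidal functor (by the compatibility of $i_\la$ with induction and composition recorded in \S\ref{sec:2cat}) and is faithful (Corollary after Theorem \ref{embedding}). The images of $x$ and $\tau$ are, by construction, the endomorphism of $\D(\a)$ from \cite[Theorem 3.12]{facefunctors} and the morphism $\D(\a)\circ\D(\b)\to\D(\b)\circ\D(\a)$ from \cite[Lemma 3.10]{facefunctors}. The relations (\ref{qha})--(\ref{qhalast}) are relations in $R_F$ among exactly these morphisms; but these are precisely the relations that \cite{facefunctors} establishes hold (this is the content of the main construction there, that $R_F$ embeds in $R$ via these assignments), so the quiver Hecke relations transport to $K^b(\U)$. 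In more detail: each such relation is an equality of two morphisms between fixed objects $\F_{\ii}\to\F_{\jj}$; applying the faithful functor $i_\la$, it suffices to check the equality after applying $i_\la$, which is exactly the statement proved in \cite{facefunctors}. The parameters $Q_{ij}$ (equivalently the scalars $v_{st}$) for $\U_F$ were \emph{chosen} to match, so no sign or scalar discrepancies arise.

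The real work, as anticipated in the paper's introduction, is the invertibility relations (\ref{rightcross})--(\ref{inv3}): we must exhibit the inverse 2-morphisms $L$, $\psi_n$, $\varphi_m$ in $K^b(\U)$. Here I expect one cannot simply import a statement from \cite{facefunctors}, since those maps encode the $\mathfrak{sl}_2$-relations between $\D(\a)$ and its biadjoint, and this is where the passage to the homotopy category is essential. The strategy is a rank-one reduction: for fixed $\a\in\D_F$, the relevant morphisms only involve $\E_\a,\F_\a$ and the 2-morphisms between them, so one is reduced to checking an $\mathfrak{sl}_2$-type categorical relation for the single object $i_\la(\D(\a))$. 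One then invokes the structure of $\D(\a)$ as a root module: by Lemma \ref{ses}, $\D(\a)$ is obtained from root modules for lower roots (ultimately simple roots, where $\D(\a_t)=\F_t$ and the relation is the original relation in $\U$) by cones of the canonical maps $f_{\b\ga}$, and $\T_s$-type arguments as in Proposition \ref{tofstandard} show these cones are compatible with the $\mathfrak{sl}_2$ structure. Thus the invertibility of (\ref{rightcross})--(\ref{inv3}) for $i_\la(\D(\a))$ follows from the invertibility of the corresponding Rickard-complex maps in $K(\U)$, which is exactly the theorem of \cite{vera} together with Theorem \ref{aller} of \cite{allr}. The main obstacle, and where the simply-laced and characteristic hypotheses bite, is making this rank-one reduction precise: one must identify $i_\la(\D(\a))$ with (a summand of) an iterated Rickard-complex twist of some $\F_t1_\mu$, so that the invertibility data is literally transported along the autoequivalences $\T_s$ — this is essentially the content of Proposition \ref{tofstandard} and its proof mechanism, and the degree/dot bookkeeping in identifying $\psi_n$ and $\varphi_m$ with the correct number of dots on a cap is the fiddly part.
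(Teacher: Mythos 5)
Your overall strategy for the one-colour relations and the quiver Hecke relations matches the paper reasonably well: you correctly identify that the quiver Hecke relations transport from \cite{facefunctors} (the paper cites \cite[Theorem 3.23]{facefunctors} directly rather than re-deriving it from faithfulness of $i_\la$, but this is a cosmetic difference), and you correctly identify that the single-colour $\mathfrak{sl}_2$-relations (\ref{inv2})--(\ref{inv3}) are transported along composites of the equivalences $\T_s$ that carry $\F_t$ to $\D(\a)$ via Proposition \ref{tofstandard}. What you leave implicit — and the paper makes explicit, and genuinely needs — is the combinatorics of \emph{face transformations}: the paper defines operations $\sigma_s$ and $\sigma_s^*$ on faces (arising from applying $s$ to the sequence of linear functionals of Theorem \ref{linearfunctionals}) and observes that each such transformation corresponds to conjugating by $\T_s$ or $\T_s^{-1}$. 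Without naming this mechanism, your ``rank-one reduction'' is only a hope; with it, the one-colour check is as you describe.

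The genuine gap is your treatment of the mixed relation (\ref{rightcross}), which involves \emph{two distinct colours} $\a\ne\b$ in $\D_F$. You lump it together with (\ref{inv2})--(\ref{inv3}) and call the whole thing a ``rank-one reduction,'' but (\ref{rightcross}) is not rank one: the 2-morphism $\E_\b\F_\a1_\la \to \F_\a\E_\b1_\la$ depends on both $\D(\a)$ and $\D(\b)$ simultaneously, and a composite of $\T_s$'s that turns $\D(\a)$ into a simple $\F_t$ will generally transform $\D(\b)$ into something nontrivial. Your proposal does not say how to handle this simultaneous behaviour. The paper's argument is more delicate and splits by type. In the affine case it applies a sequence of face transformations to reduce to the situation $\a=\a_0$, then uses the structural fact that $\b$ (being a root with $\a_0+\b$ a summand of $\delta$, which has $\a_0$-coefficient $1$) contains no $\a_0$, so a projective resolution of $\D(\b)$ contains no $P_0$; hence the rightward-crossing map of complexes $\E_0\,\iota_\la(\D(\b))\to\iota_\la(\D(\b))\,\E_0$ consists termwise of the genuine rightward crossings of $\U$, whose inverse is the map of complexes consisting of leftward crossings. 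In the finite case the paper instead applies finitely many $\sigma_s$ to make $F^-$ empty, so $F$ becomes a standard face $J\subset I$ and the invertibility is immediate from the ambient relations in $\U$. These two case analyses are the real content of the theorem and are missing from your proposal.
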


 \begin{proof}
Since $\U_F$ is given by a presentation, we have to check the defining relations hold. 

We first make an observation about faces. Let $(F^-,F,F^+)$ be a face. Suppose $s\in S$ is such that the corresponding simple root $\a_s$ satisfies $\a_s\in F^-$. Then we can define a new face
 \[
  \sigma_s(F^-,F,F^+)=(s(F^-)\setminus\{-\a_s\},s(F),s(F^+)\sqcup\{\a_s\}).
 \]
 Conversely if $\a_s\in F^+$ then there is a new face
 \[
  \sigma_s^*(F^-,F,F^+)=(s(F^-)\sqcup\{\a_s\},s(F),s(F^+)\setminus\{-\a_s\}).
 \]

 In terms of the sequence of linear functionals $\{\la_n\}_{n\in\N}$ from Theorem \ref{linearfunctionals}, both of these constructions arise from the sequence $\{s(\la_n)\}_{n\in\N}$. If $\a_s\in F$, then applying $s$ to each of the functionals in this sequence does not change the face.
 
Suppose $\a\in \Delta_F$. We can apply a sequence of transformations of the above type to transform $\a$ into an element of $S$. By Proposition \ref{tofstandard}, the corresponding composition of $\T_s$'s will send $\F_s$ to $\D(\a)$. We therefore conjugate by a composite of Rickard complexes, so the one-colour relations must be preserved. This shows that the one-colour relations all hold.

The validity of the quiver Hecke relations is checked in \cite[Theorem 3.23]{facefunctors}. 
%
%
 It remains to show that the image of a rightward crossing (\ref{rightcross}) with two different colours is invertible.
  
  If the face is of affine type, $\Phi$ must be simply laced of affine type, hence every root is in the $W$-orbit of $\a_0$, where $\a_0$ is the simple affine root and $W$ is the Weyl group. Let $\a\in \D_F$. Pick $w\in W$ such that $w\a=\a_0$. 
  Let $\{\la_n\}_{n\in\N}$ be the sequence of linear functionals associated to $F$ from Theorem \ref{linearfunctionals}.
  We apply $w$ to the sequence $\{\la_n\}_{n\in\N}$ to get a new face related by a sequence of reflection functors $\T_s$ and $\T_s\inv$. Therefore we may assume without loss of generality that any given root $\a\in \D_F$ is $\a_0$. So given two roots $\a,\b\in\D_0$, without loss of generality $\a=\a_0$. Then since $\a+\b$ is a positive summand of $\d$ and the coefficient of $\a_0$ in $\d$ is 1, there is no occurrence of $\a_0$ in $\b$. Thus a projective resolution of $\D(\b)$ by standard projective modules has no occurrence of $P_0$. So the map $\E_0\D(\b)\to \D(\b)\E_0$ as a map of complexes consists of all rightward crossings. It is then clear that the inverse of this map of complexes is the corresponding map with all leftward crossings.
  
  If the face is of finite type then either $F^-$ or $F^+$ is finite. Without loss of generality, assume $F^-$ is finite. As in the one-colour case, we apply a finite number of transformations $\sigma_s$ to reduce to the case when $F^-$ is the empty set. Now there is a functional $\la$ such that $\la(\a)\geq 0$ for all $\a\in\Phi^+$, and $\ker(\la)\cap \Phi^+ = F$. Then it is clear that $F$ is a standard face, i.e. arising from an inclusion $J\subset I$, and in this case the result is obvious.
 \end{proof}

 It is natural to make the following conjecture. It is not obvious from the results discussed here since $K^b(K^b(\mathcal{A}))\not\cong K^b(\mathcal{A})$ for a general additive category $\A$.
 
 \begin{conjecture}
  This 2-functor from $\U_F$ to $K^b(\U)$ extends to a faithful 2-functor from $K^b(\U_F)$ to $K^b(\U)$.
 \end{conjecture}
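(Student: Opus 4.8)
We outline a possible line of attack; faithfulness is the substantive point, the existence of the extension being comparatively formal once one sees how to evade the difficulty flagged in the remark above.

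\emph{Construction of $\hat\Psi$.} The construction in the proof of Theorem~\ref{homotopyface} is really carried out at the level of genuine complexes: each generating $2$-morphism of $\U_F$ is sent to an honest chain map between the bounded complexes $i_\la(\D(\a))$, and the defining relations of $\U_F$ hold there on the nose --- the one-colour relations because the relevant part of the assignment is a genuine conjugation by Rickard complexes, the quiver Hecke relations by \cite[Theorem 3.23]{facefunctors}, and the invertibility relations by construction. Thus the assignment upgrades to a $2$-functor $\widetilde\Psi$ from $\U_F$ to the $2$-category of bounded complexes over $\U$ with honest chain maps as $2$-morphisms; applying $\widetilde\Psi$ term-by-term to a bounded complex of $1$-morphisms of $\U_F$ and forming total complexes then descends, on homotopy categories, to a $2$-functor $\hat\Psi\map{K^b(\U_F)}{K^b(\U)}$ extending $\Psi$ --- and landing in $K^b$ rather than only $K^-$, since each $\D(\a)$ has finite projective dimension by Lemma~\ref{standardfpdim}. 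Passing through $\widetilde\Psi$ is essential for exactly the reason the remark gives, namely $K^b(K^b(\A))\not\cong K^b(\A)$: totalising a complex whose terms live in a homotopy category would require coherent null-homotopies for $d^2$, whereas totalising honest complexes is harmless. This same point means faithfulness of $\hat\Psi$ does \emph{not} follow formally from faithfulness of $\Psi$, since $\hat\Psi$ factors through $K^b(K^b(\U))$ and the totalisation $K^b(K^b(\U))\to K^b(\U)$ is not faithful.

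\emph{Reduction of faithfulness.} We must show each functor $\Hom_{K^b(\U_F)}(F,G)\to\Hom_{K^b(\U)}(\hat\Psi F,\hat\Psi G)$ is injective. A $2$-functor preserves biadjoints, and in $\U_F$ every $1$-morphism is a retract of a sum of $\F_\jj\E_\ii1_\la$'s; using $\Hom(F,G)\cong\Hom(1_\la,F^\vee\circ G)$ and the commutation isomorphisms $\E_\a\F_\b\cong\F_\b\E_\a\oplus\cdots$ to put $F^\vee\circ G$ in the form of a bounded complex built only from the $\F_\jj1_\la$ (just as one passes from Theorem~\ref{nondegeneracy} to a basis of all morphism spaces), it suffices to prove injectivity when $F$ and $G$ are bounded complexes built from the $\F_\jj1_\la$, the case of retracts following formally. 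By the nondegeneracy theorem applied to $\U_F$ one has $\Hom_{\U_F}(\F_\ii1_\la,\F_\jj1_\la)=\Hom_{R_F}(P_\ii,P_\jj)\otimes\B_\la^F$, where $\B_\la^F=\End_{\U_F}(\id_\la)$ is the bubble algebra of $\U_F$. Under $\hat\Psi$, such an $F$ goes to $i_\la$ of the complex obtained by substituting, for each $\F_\jj1_\la$, a finite projective resolution of $\D(\a_{j_1})\circ\cdots\circ\D(\a_{j_n})$ (using that $i_\la$ carries induction products to composites); write $\Phi\map{K^b(R_F(\nu)\prmod)}{K^b(R(\nu)\prmod)}$ for the resulting functor --- the homotopy-level avatar of the face functor of \cite{facefunctors} --- and $\psi_\B\map{\B_\la^F}{\B_\la}$ for the graded algebra homomorphism that $\Psi$ induces on $\End(\id_\la)$. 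Invoking Theorem~\ref{embedding} for $\U$ to see that the target Hom-spaces acquire nothing beyond $\B_\la$, one checks that $\hat\Psi$ on these Hom-spaces is $\Phi$ on the module coordinates tensored over $k$ with $\psi_\B$ on the bubble coordinates. Faithfulness of $\hat\Psi$ is thereby reduced to: $\Phi$ is faithful, and $\psi_\B$ is injective.

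\emph{The crux.} Injectivity of $\psi_\B$ ought to be accessible directly: both algebras are polynomial, and the image of each generating bubble of $\U_F$ is the endomorphism of $\id_\la$ built from the explicit action of the dot on $\D(\a)$ recorded in \cite[Theorem 3.12]{facefunctors} together with the adjunction maps, so one verifies that these images are algebraically independent degree by degree. The genuine obstacle is faithfulness of $\Phi$ on bounded complexes of projectives. The face functor $R_F\mods\to R\mods$ of \cite{facefunctors} is fully faithful, but its right adjoint is far from exact (the modules $\D(\a)$ are highly non-projective over $R$), so this faithfulness --- equivalently, the assertion that the face functor remains fully faithful after passing to $D^b$, i.e.\ that $\Ext^i_{R_F}(M,N)\cong\Ext^i_R(\Phi M,\Phi N)$ for all $i\ge0$ and not merely $i=0$ --- is strictly stronger than what is established in the cited works. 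Proving this higher-homological compatibility, presumably through a careful study of the right adjoint of the face functor and of the unit of its adjunction on all of $D^b(R_F)$, is where the real difficulty resides and is the point on which the conjecture turns.
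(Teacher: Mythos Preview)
The statement you are attempting is a \emph{conjecture} in the paper, not a theorem: the paper gives no proof. The only positive evidence offered is the remark immediately following, which observes that in finite type the desired 2-functor is a composite of the equivalences $\T_s$, $\T_s^{-1}$ and inclusions of standard faces $J\subset S$, each of which is faithful (the last by nondegeneracy), so the conjecture holds there. Beyond finite type the paper leaves the question open.

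Your proposal is, appropriately, not a proof but a strategy, and you are candid that it does not close the gap. Two remarks on the strategy itself. First, your construction of $\widetilde\Psi$ landing in honest complexes requires the defining relations of $\U_F$ to hold \emph{on the nose}, not merely up to homotopy. For the quiver Hecke relations this is indeed what \cite{facefunctors} provides, but your justification for the one-colour relations --- ``genuine conjugation by Rickard complexes'' --- is suspect: the Rickard complex is invertible only in $K(\udot)$, so conjugation by it is a homotopy-level operation and a priori yields identities only up to homotopy. You would need to verify the nil-Hecke relations and the invertibility relations for a single colour directly at the chain level, which is not automatic. Second, the reduction you outline is reasonable, and you correctly isolate the crux: whether the face functor of \cite{facefunctors} induces isomorphisms on all $\Ext^i$, not just $\Ext^0$. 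This is precisely the content the paper does not supply and is why the statement remains a conjecture. Your identification of this as ``the point on which the conjecture turns'' is consistent with the paper's own assessment that the extension ``is not obvious from the results discussed here''.
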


 \begin{remark}
  In finite type the face 2-functors $K^b(\udot_F)\to K^b(\udot)$ are all compositions of the equivalences $\T_s$ and $\T_s\inv$, together with the inclusions of a face obtained from a subset of $S$, and so in particular are faithful (the inclusions of these latter types of faces are faithful by the nondegeneracy theorem).
 \end{remark}

 
 As a consequence, any time $\U$ acts on a category $\A$, we can restrict along the face to obtain an action of $\U_F$ on $K^b(\A)$. In special cases (such as those considered in \cite{maksimauth} and \cite{richewilliamson}), we actually get an action of $\U_F$ on $\A$. It would be interesting to have an elegant and practical criterion to determine when this restricted action is an action on $\A$ instead of only an action on $K^b(\A)$. In lieu of a beautiful criterion, we now state a necessary condition which is enough to reconstruct the categorical restrictions of \cite{maksimauth,richewilliamson}.
 
 For all $\a\in \D_F$, choose a projective resolution of $\D(\a)$. For each indecomposable projective appearing in a resolution except for those in homological degree zero, choose an inclusion $P\subset P_\ii$ as a direct summand. Write $\ii=(i_1,\ldots,i_n)$. Then our criterion is, that for all weights $\la$ and for all such $\ii$, that at least one of the categories
 \[
  \A(\la),\A(\la+i_1),\ldots, \A(\la+i_1+\cdots+i_n)
 \]
 is zero.
 
 This criterion works because it implies that for each generating 1-morphism of $K(\U_F)$, there is an endofunctor of $\A$ inducing the same action on $K(\A)$.
 
 This condition can be checked at the level of Grothendieck groups.
 For example, there is a categorical action of $\hat{\mathfrak{sl}}_p$ on the principal block of $\Rep(GL_n;\overline{\mathbb{F}}_p)$ for $n\geq p$, constructed in \cite{richewilliamson}. At the level of Grothendieck groups, this categorifies the $n$-th exterior power of the natural representation of $\hat{\mathfrak{sl}}_p$ on $\mathbb{C}^p\otimes \mathbb{C}[t,t\inv]$.
 
%
%
%
%
 
\bibliographystyle{alpha}
\def\cprime{$'$}

\end{document}